\newtheorem{theorem}{Theorem}[section]
\newtheorem{lemma}[theorem]{Lemma}
\newtheorem{proposition}[theorem]{Proposition}
\newtheorem{corollary}[theorem]{Corollary}
\newtheorem*{problemnn}{Problem}
\theoremstyle{definition}
\newtheorem{definition}[theorem]{Definition}
\newtheorem{condition}[theorem]{Condition}
\newtheorem{notation}[theorem]{Notation}
\newtheorem{remark}[theorem]{Remark}
\newtheorem{example}[theorem]{Example}
\theoremstyle{remark}
\newcommand{\Si}{\mathfrak{S}}
\newcommand{\Ext}{{\mathrm{Ext}}}
\renewcommand{\hom}{{\mathrm{Hom}}}
\newcommand{\End}{{\mathrm{End}}}
\newcommand{\Id}{{\mathrm{Id}}}
\newcommand{\PP}{\mathcal{P}}
\newcommand{\kk}{\Bbbk}
\newcommand{\Z}{\mathbb{Z}}
\newcommand{\C}{\mathcal{C}}
\newcommand{\DD}{\mathbf{D}}
\newcommand{\Soc}{\mathrm{Soc}}
\newcommand{\Head}{\mathrm{Head}}
\newcommand{\St}{\mathcal{S}t}
\newcommand{\StBif}{\mathcal{S}t'(d,e,r)}
\newcommand{\uotimes}{\underline{\otimes}}
\newcommand{\uHom}{\underline{\mathrm{Hom}}}
\begin{document}

\title{Connectedness of cup products for polynomial representations of $GL_n$ and applications}

\author{Antoine Touz\'e}
\date{\today}

\maketitle

\begin{abstract}
We find conditions such that cup products induce isomorphisms in low degrees for extensions between stable polynomial representations of the general linear group. We apply this result to prove generalizations and variants of the Steinberg tensor product theorem. Our connectedness bounds for cup product maps depend on numerical invariants which seem also relevant to other problems, for example to study the cohomological behavior of the Schur functor.
\end{abstract}

\section{Introduction}

Let $\kk$ be a field of positive characteristic $p$, and let $G$ be an algebraic group over $\kk$. The category of rational representations of $G$ (as in \cite{Jantzen}) is equipped with a tensor product, which induces a cup product on extension groups :
$$\Ext^*_G(M,N)\otimes \Ext^*_G(P,Q)\xrightarrow[]{\cup} \Ext^*_G(M\otimes P,N\otimes Q)\;.$$
Of course the cup product is injective (but usually not surjective) in cohomological degree zero, and in general it is neither injective nor surjective in higher degrees.
If $G=GL_n(\kk)$, it was observed in \cite{TouzeClassical} that the cup product is injective in all degrees when $M$, $N$, $P$, $Q$ are \emph{stable polynomial representations}, i.e. when $M$, $N$, $P$, $Q$  are polynomial representations in the usual sense \cite{Green,Martin} and furthermore when $n$ is big enough with respect to their degrees. This surprising fact is easily proved by using the description of stable polynomial representations in terms of the strict polynomial functors of Friedlander and Suslin \cite{FS}. 

The first main result of this article is theorem \ref{thm-1}, which establishes conditions under which cup products are not only injective, but also surjective in low degrees. Theorem \ref{thm-1} actually applies to the case where $N$ and $Q$ are representations twisted $r$-times by the Frobenius morphism, i.e. for cup products of the form:
$$\Ext^*_G(M,N)\otimes \Ext^*_G(P^{(r)},Q^{(r)})\xrightarrow[]{\cup} \Ext^*_G(M\otimes P^{(r)},N\otimes Q^{(r)})\;.$$
As for injectivity in \cite{TouzeClassical}, the natural home for stating and proving this connectedness property of cup products is the category of strict polynomial functors. We note that already in degrees $0$ and $1$, our theorem looks much stronger than what was previously known for the behaviour of cup products, see remark \ref{rk-Jantzen}.

We then give concrete applications of theorem \ref{thm-1} to the representation theory of $GL_n(\kk)$. Namely we prove the following two new generalizations of Steinberg tensor product theorem.
\begin{itemize}
\item We call \emph{tensor products of Steinberg type} the stable polynomial representations of the form $M\otimes N^{(r)}$, where all the composition factors of $M$ have $p^r$-restricted highest weights. Representations of this form appear naturally, e.g. in the theory of good $p$-filtrations \cite{Andersen}.

In theorem \ref{thm-eq}, we describe the structure of the abelian subcategory generated by these tensor products of Steinberg type (with $r$ and $\deg M$ fixed). In particular we prove that the $GL_n(\kk)$-module $M\otimes N^{(r)}$ has the same structure as the $GL_n(\kk)\times GL_n(\kk)$-module $M\otimes N$. This is interesting because the latter is much easier to study. (The classical Steinberg tensor product theorem corresponds to the very special case where $M$ and $N$ are simple. Indeed in that case $M\otimes N$ is simple as a $GL_n(\kk)\times GL_n(\kk)$-module, thus by our theorem the $GL_n(\kk)$-module $M\otimes N^{(r)}$ is simple too).  
\item As made explicit in \cite{Krause}, stable polynomial representations are equipped with an internal tensor product (Day convolution product), dual to the internal Hom used in $\Ext$-computations e.g. in \cite{TouzeRingel,TouzeEML}. In theorem \ref{thm-SteinInternal} we explain how to reduce the computation of internal tensor products of simples to the case of simples with $p$-restricted highest weights. Thus, theorem \ref{thm-SteinInternal} plays the same role for understanding internal tensor products of simples as the classical Steinberg tensor product theorem does for understanding ordinary tensor products of simples.
\end{itemize}
In appendix \ref{app-Stein} we show that theorem \ref{thm-1} can also be used to derive new proofs of two well-known fundamental theorems for simple representations of $GL_n(\kk)$, namely Steinberg tensor product theorem and Clausen and James' theorem. We note that another functor technology proof of Steinberg tensor product theorem is given in \cite{Kuhn}. The proof given here seems quite different, see remark \ref{rk-Kuhn}.

\medskip

The bounds for connectedness given in theorem \ref{thm-1} depend on some cohomological constants $p(M,r)$ and $i(N,r)$. To be more specific, a projective stable polynomial module is $p^r$-bounded if its socle is a direct sum of simples with $p^r$-restricted highest weights, cf. corollary \ref{cor-bounded}.
The integer $p(M,r)$ is characterized by: 

\begin{center} 
$p(M,r)\ge k$ if and only if there exists a resolution $P$ of $M$ in which the first $k$ terms $P_0$,\dots, $P_{k-1}$ are $p^r$-bounded projectives.
\end{center}

The integer $i(N,r)$ is defined dually, cf definition \ref{def-pi}. Although we use this definition for stable polynomial representations, it makes sense for unstable polynomial representations as well. We are not aware of previous occurrences of these constants in the literature. We study their basic properties and give characterizations of these constants, as well as elementary computation rules and examples. In most examples, we limit ourself to giving estimates for these constants rather than exact values, and leave the following challenging problem open.

\begin{problemnn}
Compute (or obtain a reasonnable understanding of) the exact value of $p(M,r)$ and $i(M,r)$ for the most important $GL_n(\kk)$-modules (simple modules, standard or costandard modules).
\end{problemnn}

One further motivation for this problem is that the constants $p(M,r)$ and $i(M,r)$ seem to be related to other problems of interest. Let us give two examples. 
\begin{itemize}
\item In theorem \ref{thm-KN}, we prove that the constants $p(M,1)$ and $i(M,1)$ govern the connectedness of the Schur functor on the level of extensions. The cohomological behavior of the Schur functor was already studied in a series of papers \cite{DEN,KN,KS}. Our theorem \ref{thm-KN} gives a simpler and effective approach to this problem. For example, with our first computations of $i(F,1)$ and $p(F,1)$, we recover and generalize many results from Kleshchev and Nakano in \cite{KN}.

\item It seems that the values of $p(L,r)$ capture some interesting concrete properties of simple functors $L$. Clausen and James' theorem \cite{Clausen, James} says that $p(L,1)>0$ if and only if the highest weight of $L$ is $p$-restricted. Reischuk has proved \cite{Reischuk} that $p(L,1)>1$ if and only if $Q^d\uotimes L$ is simple, where $Q^d$ is the simple head of $S^d$ (see section \ref{sec-tensint} and in particular theorem \ref{thm-SteinInternal} and corollaries \ref{cor-intss} and \ref{cor-lesdeuxpareil} to understand why this particular internal tensor product is interesting). It would be interesting to understand if higher inequalities $p(L,1)>k$ (of cohomological nature) are directly connected to some non-homological representation theoretic properties of $L$.
\end{itemize}

We finish by explaining a wider perpective behind the work presented in this article. Functor category techniques have proved useful for studying representations and cohomology of many variants of classical matrix groups. See for example \cite{TouzeClassical} for symplectic and orthogonal group schemes, \cite{Axtell,Drupieski} for super Schur algebras, \cite{HY} for quantum $GL_n$, \cite{FFSS,DV} for finite classical groups or more generally \cite{Djament} for discrete unitary groups. In all these examples, the functor categories in play share many properties with the category of strict polynomial functors used here. So we expect that the techniques and results developed in this article can be adapted to these cases. For example, we prove in \cite{TouzeICRA} an analogue of theorem \ref{thm-1} for polynomial representations of othogonal and symplectic group schemes.

\medskip

This article has been written in such a way that the main thread of ideas and proofs is self-contained. In particular, only very basic facts of the representation theory of general linear groups are used (the highest weight category structure is used only for the results of section \ref{subsec-ex}) and no combinatorics of the symmetric group is used (except a result of Bessenrodt and Kleshchev \cite{BK} in corollary \ref{cor-intss}). These basic facts are recalled in section \ref{sec-back}. In the same spirit, we have also added an appendix on representations of tensor products of finite dimensional algebras, whose results are used in section \ref{sec-tensS}.

\subsection*{Acknowledgments}
We are grateful to Lionel Schwartz for communicating to us an alternative proof of theorem \ref{thm-tensprespres} relying on Steenrod operations. We are also grateful to Alexander Zimmermann for several discussions on representations of finite dimensional algebras, and to Shraddha Srivastava for pointing out a missing hypothesis in a first version of proposition \ref{prop-calcul}.

\section{Background}\label{sec-back}
\subsection{Strict polynomial functors and Schur algebras}\label{subsec-recallbasic}
In this article $\kk$ is a field of positive characteristic $p$, and $\PP_{d,\kk}$ denotes the category of homogeneous strict polynomial functors of degree $d$ over $\kk$ (with possibly infinite dimensional values). We refer e.g. to \cite{Panorama}, \cite{FS} or \cite{Krause} for an introduction to strict polynomial functors. 
If one considers an infinite ground field $\kk$, strict polynomial functors have a nice description like the one in \cite{MacDonald} (where they are simply called `polynomial functors'). Namely, strict polynomial functors are functors from finite dimensional $\kk$-vector spaces to $\kk$-vector spaces, such that for all pairs of finite dimensional vector spaces $(V,W)$, the map 
$$\begin{array}{ccc} \hom_\kk(V,W)& \to & \hom_\kk(F(V),F(W))\\
f& \mapsto & F(f)   
\end{array}
$$
is given by a homogeneous polynomial of degree $d$ (that is, given by an element of $S^d(\hom_\kk(V,W)^*)\otimes \hom_\kk(F(V),F(W))$). 

For example, the category $\PP_{0,\kk}$ is equivalent to the category of constant functors, which is equivalent to the category of $\kk$-vector spaces. Typical examples of homogeneous functors of higher degree $d$ are the tensor product functors $\otimes^d:V\mapsto V^{\otimes d}$, the symmetric power functors $S^d:V\mapsto (V^{\otimes d})_{\Si_d}$ and the divided power functors $\Gamma^d:V\mapsto (V^{\otimes d})^{\Si_d}$. (Here the symmetric group $\Si_d$ acts on $V^{\otimes d}$ by permuting the factors of the tensor product). Note that $S^0=\otimes^0=\Gamma^0=\kk$, $S^1=\otimes^1=\Gamma^1$ but for $d\ge p$ the functor $S^d$ is not isomorphic to $\Gamma^d$.

We denote by $\PP_\kk$ the category of strict polynomial functors (of bounded degree), that is $\PP_\kk=\bigoplus_{d\ge 0}\PP_{d,\kk}$. By evaluating a strict polynomial functor $F$ on $\kk^n$, one obtains a polynomial $GL_n(\kk)$-module $F(\kk^n)$. Restricting to homogeneous strict polynomial functors of degree $d$, one obtains a functor
$$\PP_{d,\kk}\xrightarrow[]{\mathrm{ev}_{\kk^n}} \mathrm{Pol}_{d,GL_n(\kk)}\simeq S(n,d)\text{-Mod}\;,$$
where $\mathrm{Pol}_{d,GL_n(\kk)}$ denotes the category of homogeneous polynomial representations of $GL_n(\kk)$ of degree $d$, and  $S(n,d)\text{-Mod}$ the equivalent category of modules over the Schur algebra $S(n,d)$ (which is finite dimensional).
It is an equivalence of categories, provided $n\ge d$.
In particular $\PP_\kk$ has nice properties similar to the ones of modules over a finite dimensional algebra. We shall use the following ones in the sequel.
\begin{enumerate}
\item Simple functors are homogeneous functors, and their values are finite dimensional vector spaces. A functor has a finite composition series if and only if it has finite dimensional values; such functors are called \emph{finite}. Finally, every functor is the union of its finite subfunctors.
\item Arbitrary direct sums of injective functors are injective, and every functor can be embedded into a direct sum of finite injectives.
\item Any nonzero homogeneous functor has a nonzero socle, a nonzero head and a finite Loewy length.
\end{enumerate}

\subsection{Elementary facts regarding simple functors}\label{subsec-simple}

Simple polynomial $GL_n(\kk)$-modules are traditionnaly classified by examining the action of a maximal torus on $GL_n(\kk)$-modules, that is using the concept of highest weights, see e.g. \cite[Chap. 1]{Martin}. In the sequel of the article, we shall use the following consequences of this classification.
\begin{enumerate}
\item Isomorphism classes of simple functors are in bijective correspondence with partitions.
For each partition $\lambda=(\lambda_1,\dots,\lambda_k)$ we fix a simple functor $L_\lambda$ in the corresponding isomorphism class. Then $L_\lambda$ is homogeneous of degree $\sum \lambda_i$. We call $\lambda$ the \emph{highest weight} of $L_\lambda$.  For example, the only simple functor of degree $0$ is $L_{(0)}=\kk$.
\item\label{item2} Simple functors are self-dual. To be more specific, each simple functor $L$ is isomorphic to its dual $L^\sharp$, defined by $L^\sharp(V):=L(V^*)^*$.
\item\label{item3} Simple functors have endomorphism rings of dimension $1$.
\item For all partitions $\lambda$ and $\mu$, $L_{\lambda+\mu}$ is a composition factor of $L_\lambda\otimes L_\mu$. 
\end{enumerate}
\begin{remark}
Actually, one needs that $\kk$ is algebraically closed to obtain easily (Schur Lemma) that the endomorphism ring of a simple functor has dimension one. When $\kk$ is not algebraically closed, this can be proved using the fact that Schur algebras are quasi-hereditary, see e.g. \cite[Chap. 3]{Martin}. 
\end{remark}

\subsection{Frobenius twists}\label{subsec-frob-twist}
let $\kk$ be a field of positive characteristic $p$. For all $r\ge 0$, we denote by $I^{(r)}$ the $r$-th Frobenius twist functor. The functor $I^{(0)}=I=S^1=\Gamma^1=\Lambda^1$ is the identity functor. More generally, for all $r\ge 0$ the functor  $I^{(r)}$ is the unique simple additive functor of degree $p^r$ (up to isomorphism). 

\begin{notation}\label{nota-frob}
We use the traditional notation $F^{(r)}=F\circ I^{(r)}$. We also denote by $F\otimes G^{(r)}$ the tensor product of $F$ and $G^{(r)}$, i.e. Frobenius twists have a priority higher than tensor products in our notations.
\end{notation}

The effect on $\Ext^*$ of precomposition by Frobenius twist is now well-understood in all degrees \cite{TouzeClasses, Chalupnik15}. In particular, in degrees $i=0$ or $i=1$, the $\kk$-linear morphism induced by precomposition by $I^{(r)}$:
$$\Ext^i_{\PP_\kk}(F,G)\to \Ext^i_{\PP_\kk}(F^{(r)},G^{(r)}) $$
is an isomorphism. This description of the effect of precomposition by Frobenius twists in degrees $0$ and $1$ case can be proved by very elementary means, see e.g. \cite[Appendix A]{BMT}. We will not need to know about higher degrees, except in the proof of proposition \ref{prop-op}.

\subsection{Bifunctors and sum-diagonal adjunction}\label{subsec-cross}

We will need strict polynomial functors with several variables for intermediate computations, as well as in the study of tensor products of Steinberg type in section \ref{sec-tensS}. Definitions and basic properties of strict polynomial functors extend without problem to the case of functors with several variables, and we refer to \cite[section 2]{SFB}, \cite[section 2]{TouzeClassical} or \cite[section 3]{TouzeControl} for details. We recall here the main features of the theory in the context of bifunctors, and leave to the reader the obvious formulas with three variables or more.

Given two nonnegative integers $d_1$ and $d_2$, we denote by $\PP_{d_1,d_2,\kk}$ the category of homogeneous strict polynomial bifunctors of bidegree $(d_1,d_2)$ (with possibly infinite dimensional values). Typical examples of objects of this category are the \emph{bifunctors of separable type}, which are the bifunctors of the form: 
$$F\boxtimes G: (V,W)\mapsto F(V)\otimes G(W) $$
where $F$, resp. $G$, is a homogeneous strict polynomial functor of degree $d_1$, resp. $d_2$. Just as in the one variable case, evaluating bifunctors on a pair of vector spaces $(\kk^n,\kk^m)$ yields a functor:
$$\PP_{d_1,d_2,\kk}\to S(n,d_1)\otimes S(m,d_2)\text{-Mod}\;,$$
where $S(m,d_1)$ and $S(m,d_2)$ are Schur algebras (which are finite dimensional). Moreover, this functor is an equivalence of categories if $n\ge d_1$ and $m\ge d_2$. In particular $\PP_{d_1,d_2,\kk}$ satisfies the three properties mentioned at the end of section \ref{subsec-recallbasic}. We have a K\"unneth morphism:
$$\Ext^*_{\PP_{d_1,\kk}}(F_1,G_1)\otimes \Ext^*_{\PP_{d_2,\kk}}(F_2,G_2)\xrightarrow[]{\kappa} \Ext^*_{\PP_{d_1,d_2,\kk}}(F_1\boxtimes F_2,G_1\boxtimes G_2)\;.$$ 
This K\"unneth morphism is an isomorphism if the quadruple $(F_1,G_1,F_2,G_2)$ satisfies the following condition.
\begin{condition}[K\"unneth condition]\label{KCond}
In the quadruple $(F_1,G_1,F_2,G_2)$, $F_1$ and $F_2$ are both finite functors, or $F_1$ and $G_1$ are both finite functors.
\end{condition}

We also denote by $\PP_{d,\kk}(2)$ the category of homogeneous strict polynomial bifunctors of total degree $d$, and by $\PP_{\kk}(2)$ the category of strict polynomial functors of bounded degree, with possibly infinite dimensional values. We have decompositions:
$$\PP_{\kk}(2)=\bigoplus_{d\ge 0}\PP_{d,\kk}(2)\;,\qquad\PP_{d,\kk}(2)=\bigoplus_{d_1+d_2=d}\PP_{d_1,d_2,\kk}\;.$$
In particular, each bifunctor $B$ decomposes uniquely as a direct sum $B=\bigoplus B^{(d_1,d_2)}$ where $B^{(d_1,d_2)}$ is a homogeneous strict polynomial bifunctor of bidegree $(d_1,d_2)$. We shall refer to $B^{(d_1,d_2)}$ as the \emph{homogeneous component of bidegree $(d_1,d_2)$ of $B$}. 
A typical example of (degree $d$ homogeneous) bifunctor is the bifunctor
$$F_{\boxplus}:(V,W)\mapsto F(V\oplus W)$$
where $F$ is a (degree $d$ homogeneous) strict polynomial functor of degree $d$. Conversely, from a (degree $d$ homogeneous) bifunctor $B$ of total degree $d$ on can construct a (degree $d$ homogeneous) strict polynomial functor with one variable by \emph{diagonal evaluation}:
$$B_{\Delta}:V\mapsto B(V,V)\;.$$
These two constructions are exact and adjoint to each other on both sides. Hence we have graded isomorphisms:
\begin{align*}
&\Ext^*_{\PP_{\kk}(2)}(B,F_{\boxplus})\simeq \Ext^*_{\PP_{\kk}}(B_\Delta,F)\;,\\
&\Ext^*_{\PP_{\kk}(2)}(F_{\boxplus},B)\simeq \Ext^*_{\PP_{\kk}}(F,B_\Delta)\;.
\end{align*}
These two isomorphisms where first used in the context of strict polynomial functors in \cite{FFSS}. In this article, they will be the key tool for theorem \ref{thm-1}. As in \cite{FFSS}, we will often use them when $B$ is of separable type $B=G\boxtimes H$, hence when $B_\Delta=G\otimes H$.

\subsection{The internal tensor product}\label{subsec-internal}

The category $\PP_{d,\kk}$ is endowed with a closed symmetric monoidal structure. We denote this internal tensor product by $\uotimes$, and by $\uHom$ the associated internal hom. We refer the reader to \cite{Krause} for a presentation of this internal tensor product. 
We study the internal tensor product of simple functors in section \ref{sec-tensint}. For this purpose, we will use the following facts.
\begin{enumerate}
\item  If $F$ is a functor, we denote by $F^V$ the parametrized functor $W\mapsto F(\hom_{\kk}(V,W))$. Then the internal $\hom$ is the functor given by 
$$\uHom(F,G)(V)=\hom_{\PP_{d,\kk}}(F^V,G)\;. $$
\item The study of internal tensor products can be reduced to the study of internal $\hom$ by using the isomorphism natural with respect to $F,G$:
$$ (F\uotimes G)^\sharp\simeq \uHom(F,G^\sharp) \;.$$
Here $^\sharp$ is the duality defined by $F^\sharp(V)=F(V^\vee)^\vee$ where `$^\vee$' is the $\kk$-linear duality of vector spaces.
\end{enumerate}
\begin{remark}
Schur algebras do not have a Hopf algebra structure in general. (Indeed, Schur algebras have finite global dimension, and a Hopf algebra structure would make them  self-injective in addition, hence semi-simple.) Thus the internal tensor product on $\PP_{d,\kk}$ is an example of monoidal product which does not come from a Hopf algebra structure.
\end{remark}

\subsection{Connection with representations of symmetric groups}

Strict polynomial functors are related to representations of the symmetric groups $\Si_d$ via the Schur functors. We will use these Schur functors in sections \ref{sec-tensint} and \ref{sec-applicsym}.
Let $d$ be a positive integer. Consider the right action of the symmetric group $\Si_d$ on $\otimes^d$ given by permuting the factors of the tensor product. The Schur functor is the functor:
$$f_d:=\hom_{\PP_{d,\kk}}(\otimes^d,-): \PP_{d,\kk}\to \kk\Si_d\text{-Mod}\;.$$
Since $\otimes^d$ is projective, the Schur functor $f_d$ is exact. It has adjoints on both sides. To be more specific, the left adjoint $\ell_d$ is defined by $\ell_d(M)=(\otimes^d)\otimes_{\Si_d}M$, while the right adjoint $r_d$ is defined by $r_d(M)=((\otimes^d)\otimes M)^{\Si_d}$. The unit and counit of adjunction induce natural isomorphisms
$$M\xrightarrow[]{\simeq}f_d(\ell_d(M))\;, \quad f_d(r_d(M))\xrightarrow[]{\simeq} M\;. $$
In particular, the Schur functor $f_d$ is a quotient functor.

\section{Exts in low degrees between tensor products}\label{sec-tensExt}

\subsection{Definition of $i(F,r)$ and $i(p,r)$}
For all tuples $\lambda=(\lambda_1,\dots,\lambda_n)$ of nonnegative integers, we let:
$$\Gamma^\lambda:=\Gamma^{\lambda_1}\otimes\dots\otimes \Gamma^{\lambda_n}\,,\text{ and }\;S^\lambda:= S^{\lambda_1}\otimes\dots\otimes S^{\lambda_n}\;.$$
Let $\mathcal{T}$ denote the set of all tuples of nonnegative integers. Then the family $(\Gamma^\lambda)_{\lambda\in\mathcal{T}}$ forms a projective generator of $\PP_\kk$, while the family $(S^\lambda)_{\lambda\in\mathcal{T}}$ forms an injective cogenerator of $\PP_\kk$.

\begin{definition}\label{def-bounded}
Let $r$ be a nonnegative integer.
A tuple of nonnegative integers $\lambda=(\lambda_1,\dots,\lambda_n)$ is \emph{$p^r$-bounded} if $\lambda_k<p^r$ for all $k$. A \emph{basic $p^r$-bounded projective} (resp. injective) is a functor of the form $\Gamma^\lambda$ (resp. $S^\lambda$) where $\lambda$ is  $p^r$-bounded.
A strict polynomial functor $F$ is \emph{left $p^r$-bounded} if it is a quotient of a direct sum of basic $p^r$-bounded projectives.
Similarly, $F$ is \emph{right $p^r$-bounded} if it embeds in a product of basic $p^r$-bounded injectives.
\end{definition}

\begin{remark}
If $r=0$, the tuples $(0,\dots,0)$ are the only $p^r$-bounded tuples. Since $\Gamma^{0}=S^0=\kk$, a functor is $p^0$-bounded if and only if it is constant.
\end{remark}

The following lemma collects elementary facts on $p^r$-bounded functors.
\begin{lemma}\label{lm-elem}
\begin{enumerate}
\item[1.] The following statements are equivalent:
\begin{enumerate}
\item[(i)] $F$ is right $p^r$-bounded,
\item[(ii)] $\Soc(F)$ is right $p^r$-bounded,
\item[(iii)] $F$ embeds into a direct sum of basic $p^r$-bounded injectives.
\end{enumerate}
\item[2.] The following statements are equivalent:
\begin{enumerate}
\item[(i')] $F$ is left $p^r$-bounded,
\item[(ii')] $\mathrm{Head}(F)$ is left $p^r$-bounded,
\item[(iii')] $F$ is the union of finite left $p^r$-bounded subfunctors.
\end{enumerate}

\end{enumerate}
\end{lemma}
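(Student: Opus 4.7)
My plan is to prove each part by a cyclic chain of implications, leaving the substantive step to one direction in each. The easy implications are: (iii)$\Rightarrow$(i) via the canonical inclusion $\bigoplus S^{\lambda_i}\hookrightarrow \prod S^{\lambda_i}$; (i)$\Rightarrow$(ii) since a subfunctor of a right $p^r$-bounded functor (such as $\Soc(F)\subseteq F$) is again right $p^r$-bounded; (i')$\Rightarrow$(ii') since $\Head(F)$ is a quotient of $F$, hence of $\bigoplus_i\Gamma^{\lambda_i}$; (iii')$\Rightarrow$(i') by summing local presentations $\bigoplus_j\Gamma^{\lambda_j^{(\alpha)}}\twoheadrightarrow F_\alpha$ witnessing left $p^r$-boundedness of each member of the directed union, and then composing with the surjection $\bigoplus_\alpha F_\alpha\twoheadrightarrow F$; and finally (i')$\Rightarrow$(iii') by writing $F$ as the directed union of images of the finite subsums $\bigoplus_{i\in J}\Gamma^{\lambda_i}$, $J$ ranging over finite subsets of the index set of a presentation.

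The crucial step in Part 1 is (ii)$\Rightarrow$(iii). I decompose $\Soc(F)=\bigoplus_\mu L_\mu^{m_\mu}$. By (ii), $\Soc(F)$ embeds in some product $\prod_i S^{\lambda_i}$ of basic $p^r$-bounded injectives; the restriction to any simple summand $L_\mu$, composed with a well-chosen coordinate projection, yields an embedding $L_\mu\hookrightarrow S^{\lambda_\mu}$ into a single factor, because simplicity forces each projection to be either zero or injective and they cannot all be zero. Assembling these gives an embedding $\Soc(F)\hookrightarrow\bigoplus_\mu (S^{\lambda_\mu})^{m_\mu}$ into a direct sum of basic $p^r$-bounded injectives. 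By the structural property recalled in Section \ref{subsec-recallbasic} that arbitrary direct sums of injectives in $\PP_\kk$ remain injective, this target is itself injective, so the embedding extends along $\Soc(F)\hookrightarrow F$ to a morphism $F\to\bigoplus_\mu (S^{\lambda_\mu})^{m_\mu}$. Essentiality of $\Soc(F)$ in $F$ (standard since $\PP_{d,\kk}$ is equivalent to modules over the artinian Schur algebra $S(n,d)$ for $n\ge d$, and every nonzero cyclic subfunctor has nonzero socle) then forces this morphism to be an embedding.

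The crucial step in Part 2 is (ii')$\Rightarrow$(i'). Pick a surjection $\pi\colon\bigoplus_j\Gamma^{\lambda_j}\twoheadrightarrow\Head(F)$ witnessing left $p^r$-boundedness of $\Head(F)$. Projectivity lifts $\pi$ through $F\twoheadrightarrow\Head(F)$ to a morphism $\tilde\pi\colon\bigoplus_j\Gamma^{\lambda_j}\to F$, and chasing shows $\mathrm{Im}(\tilde\pi)+JF=F$, where $J$ denotes the Jacobson radical of $S(n,d)$. Iterating this identity gives $F=\mathrm{Im}(\tilde\pi)+J^kF$ for every $k\ge 1$, and the key point is that $J$ is nilpotent since $S(n,d)$ is finite-dimensional; choosing $k$ with $J^k=0$ gives $F=\mathrm{Im}(\tilde\pi)$, so $F$ is a quotient of $\bigoplus_j\Gamma^{\lambda_j}$, as required. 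The main obstacle in both parts is handling possibly infinite-dimensional $F$; this is precisely where the two structural features of $\PP_\kk$ highlighted above become essential, namely that arbitrary direct sums of injectives are injective on the one side, and that each homogeneous component is equivalent to modules over a finite-dimensional algebra whose radical is nilpotent on the other.
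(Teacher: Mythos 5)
Your proof is correct and follows essentially the same route as the paper: the same easy implications, the same extension of an embedding of $\Soc(F)$ into a direct sum of basic $p^r$-bounded injectives followed by a kernel-has-zero-socle argument, and the same union-of-images argument for (i')$\Leftrightarrow$(iii'). Your Nakayama-style treatment of (ii')$\Rightarrow$(i') via nilpotency of the radical is just the algebraic phrasing of the paper's dual argument (a map onto the head of $F$ lifts to a map onto $F$ because a nonzero homogeneous functor has nonzero head), so there is nothing substantively different.
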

\begin{proof}
1. It is clear that (iii)$\Rightarrow$(i)$\Rightarrow$(ii). If (ii) holds, then each simple summand of $Soc(F)$ embeds into a basic $p^r$-bounded injective, thus $\Soc(F)$ embeds into a direct sum of basic $p^r$-bounded injectives $J$. Since $J$ is injective, the monomorphism $\Soc(F)\hookrightarrow J$ extends to a map $\phi:F\to J$. But $\Soc(\ker\phi)\subset \ker\phi\cap \Soc(F)=0$ so $\phi$ is injective. This proves (iii).

2. It is clear that (i')$\Rightarrow$(ii'), the proof of  (ii')$\Rightarrow$(i') is dual to the one of (ii)$\Rightarrow$(iii). Let us prove (i')$\Leftrightarrow$(iii').
If $F$ is left $p^r$-bounded, there is a map $\pi:\bigoplus_{\lambda\in A} \Gamma^\lambda\twoheadrightarrow F$. Thus $F$ is the union of the images of the $\pi(\bigoplus_{\lambda\in B}\Gamma^\lambda)$ where $B$ is a finite subset of $A$. Conversely, if $F$ is the union of a family of finite left $p^r$-bounded functors $F_\alpha$, then $F$ is a quotient of $\bigoplus F_\alpha$, hence $F$ is left $p^r$-bounded. 
\end{proof}

\begin{definition}\label{def-pi}
Let $r$ be a nonnegative integer, and let $F$ be a functor.
\begin{enumerate}
\item We define $p(F,r)\in [0,+\infty]$ as the supremum of all the integers $n\ge 0$ such that $F$ admits a projective resolution $P$ in which the first $n$ objects $P_0,\dots,P_{n-1}$ are left $p^r$-bounded.
\item We define $i(F,r)\in [0,+\infty]$ as the supremum of all the integers $n\ge 0$ such that $F$ admits an injective resolution $J$ in which the first $n$ objects $J^0,\dots,J^{n-1}$ are right $p^r$-bounded.
\end{enumerate}
\end{definition}

\begin{remark}
\begin{enumerate}
\item[i)] By definition $p(F,r)> 0$ (resp. $i(F,r)> 0$) if and only if $F$ is left (resp. right) $p^r$-bounded.
\item[ii)] If $p^r>\deg F$, then all projectives or injectives appearing in any resolution of $F$ are $p^r$-bounded, hence $p(F,r)=i(F,r)=+\infty$. In particular if $F$ is constant, it is homogeneous of degree $0$ and $i(F,r)=p(F,r)=+\infty$ for all $r\ge 0$.
\item[iii)] In the definition $p(F,r)$ and $i(F,r)$ belong to $[0,+\infty]$. However, the category $\PP_{d,\kk}$ has finite global dimension $\mathrm{gldim}(d,\kk)$, which is explicitly computed in \cite{Totaro}. If $F$ is homogeneous of degree $d$, then $p(F,r)$ and $i(F,r)$ actually belong to $[0,\dots,\mathrm{gldim}(d,\kk)-1]\cup \{+\infty\}$.
\end{enumerate}
\end{remark}

\subsection{Application to the connectedness of cup products}
The tensor product $\otimes:\PP_\kk\times\PP_\kk\to \PP_\kk$, induces a cup product on extension groups in the usual way, see e.g. \cite[3.2]{Benson}. The purpose of this section is to prove the following result. 

\begin{theorem}\label{thm-1}
Let $(F,G,X,Y)$ be a quadruple of homogeneous strict polynomial functors satisfying the K\"unneth condition \ref{KCond}, and let $r\ge 0$. The cup product induces a graded injective map:
$$\Ext^*_{\PP_\kk}(F,G)\otimes \Ext^*_{\PP_\kk}(X^{(r)},Y^{(r)})\hookrightarrow \Ext^*_{\PP_\kk}(F\otimes X^{(r)},G\otimes Y^{(r)})\;.$$
Moreover, this graded injective map is an isomorphism in degree $k$ in the following situations.
\begin{enumerate}
\item  When $\deg F < \deg G$, and $k<i(G,r)$.
\item  When $\deg F > \deg G$, and $k<p(F,r)$.
\item  When $\deg F = \deg G$, and $k<p(F,r)+i(G,r)$.
\end{enumerate}
\end{theorem}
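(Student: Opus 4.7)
The strategy is to transport the cup product into the bifunctor category using the sum-diagonal adjunction of subsection~\ref{subsec-cross}:
$$\Ext^*_{\PP_\kk}(F\otimes X^{(r)},G\otimes Y^{(r)})\simeq \Ext^*_{\PP_\kk(2)}(F\boxtimes X^{(r)},(G\otimes Y^{(r)})_{\boxplus}).$$
The bifunctor $(G\otimes Y^{(r)})_\boxplus=G_\boxplus\otimes(Y^{(r)})_\boxplus$ decomposes as the direct sum of its bihomogeneous components, which are the tensor products $\Cr_{a,b}(G)\otimes \Cr_{c,d}(Y)^{(r)}$ of cross-effects, of bidegree $(a+p^rc,\,b+p^rd)$. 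The piece corresponding to $(a,b,c,d)=(\deg G,0,0,\deg Y)$ is precisely $G\boxtimes Y^{(r)}$, and under the K\"unneth condition the K\"unneth isomorphism identifies the domain of the cup product with $\Ext^*_{\PP_\kk(2)}(F\boxtimes X^{(r)},G\boxtimes Y^{(r)})$. The cup product is therefore the map induced by the split inclusion $G\boxtimes Y^{(r)}\hookrightarrow (G\otimes Y^{(r)})_{\boxplus}$, which gives the graded injectivity in complete generality, and reduces the isomorphism claim in degree $k$ to showing that $\Ext^k_{\PP_\kk(2)}(F\boxtimes X^{(r)},-)$ vanishes on each of the remaining bigraded summands.

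The technical core is a weight computation. For any nonzero $p^r$-bounded tuple $\lambda$ and any functor $H$, the weights of $H^{(r)}(\kk^n)$ are all componentwise divisible by $p^r$, so the weight-$\lambda$ space $\hom(\Gamma^\lambda,H^{(r)})$ vanishes; projectivity of $\Gamma^\lambda$ then gives $\Ext^*(\Gamma^\lambda,H^{(r)})=0$, and dually $\Ext^*(H^{(r)},S^\lambda)=0$. Feeding these into projective and injective resolutions yields the basic vanishings $\Ext^k(F,H^{(r)})=0$ for $k<p(F,r)$ and $\Ext^k(H^{(r)},G)=0$ for $k<i(G,r)$, and a K\"unneth argument with bi-separable bifunctors upgrades the first to
$$\Ext^*_{\PP_\kk(2)}\bigl(F\boxtimes X^{(r)},\,(S^\mu\boxtimes S^\nu)\otimes C^{(r)}\bigr)=0$$
in every degree, for any bifunctor $C$, any $\mu$, and any nonzero $p^r$-bounded $\nu$. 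A parallel computation, valid exactly when $c\geq 1$, gives the complementary vanishing $\hom_{\PP_\kk(2)}(\Gamma^\alpha\boxtimes H,\,C'\otimes \Cr_{c,d}(Y)^{(r)})=0$ for any $C'$, any $H$, and any nonzero $p^r$-bounded $\alpha$: in the convolution $\alpha=\alpha_1+\alpha_2$ the weight $\alpha_2$ of $\Cr_{c,d}(Y)^{(r)}$ must be a multiple of $p^r$ and hence equal $0$, but the weight-$0$ component of $\Cr_{c,d}(Y)^{(r)}$ in the first variable vanishes since $c\geq 1$.

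The three cases of the theorem are then handled as follows. In case~(1), a bidegree count forces $b>0$ on every surviving summand, so the cross-effects $\Cr_{a,b}(J^p)$ of an injective resolution $J^\bullet\to G$ with first $i(G,r)$ terms $p^r$-bounded are direct sums of $S^\mu\boxtimes S^\nu$ with $\nu$ nonzero $p^r$-bounded; the first displayed vanishing kills the first $i(G,r)$ columns of the hyper-Ext spectral sequence of $\Cr_{a,b}(J^\bullet)\otimes\Cr_{c,d}(Y)^{(r)}$, yielding $\Ext^k=0$ for $k<i(G,r)$. In case~(2), bidegree forces $c\geq 1$; applying the second weight vanishing along a projective resolution $P_\bullet\boxtimes Q_\bullet\to F\boxtimes X^{(r)}$ whose first $p(F,r)$ $P$-terms are $p^r$-bounded makes the Hom complex vanish in total degrees below $p(F,r)$, giving $\Ext^k=0$ for $k<p(F,r)$. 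Case~(3) is the decisive one: the extra summands all satisfy $c\geq 1$, both weight vanishings apply, and the hyper-Ext spectral sequence of $\Cr_{a,b}(J^\bullet)\otimes\Cr_{c,d}(Y)^{(r)}$ satisfies $E_1^{p,q}=0$ as soon as $p<i(G,r)$ (first vanishing) or $q<p(F,r)$ (second vanishing, which applies to every $E_1$-term regardless of $p$); hence $\Ext^k=0$ for $k<p(F,r)+i(G,r)$. The main obstacle is precisely this merging step: one must arrange the resolutions so that the two independently derived vanishings apply to the same $E_1$-page and together yield the sum bound, rather than merely the maximum of the two one-sided bounds.
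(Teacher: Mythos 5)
Your proposal is correct and follows essentially the same route as the paper's proof: you transport the cup product through the sum-diagonal adjunction, identify it with the split inclusion of the summand $G\boxtimes Y^{(r)}$ into $(G\otimes Y^{(r)})_\boxplus$ (this is lemma \ref{lm-cup}), and kill the complementary bihomogeneous summands by feeding the weight vanishing for nonzero $p^r$-bounded tuples against $r$-twisted functors (lemmas \ref{lm-annul-prelim} and \ref{lm-annul}) into resolutions whose first $i(G,r)$, resp.\ $p(F,r)$, terms are $p^r$-bounded. The only real difference is organizational, in case (3): the paper dimension-shifts by $p(F,r)$ along the truncated projective resolution of $F$ and then reuses the already-proved case $\deg F\le\deg G$, whereas you read off the additive bound from a single hyper-Ext spectral sequence whose $E_1$-page vanishes in the two complementary regions $p<i(G,r)$ and $q<p(F,r)$ --- both are valid, and your first displayed all-degrees vanishing is true, though its one-line justification compresses the step (as in lemma \ref{lm-annul}, one must isolate the $S^\nu$ factor as a separable tensor factor and resolve the complementary factor injectively before freezing variables).
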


The remainder of section \ref{sec-tensExt} is devoted to the proof of theorem \ref{thm-1}. Observe that we have a factorization of cup products 
$$\xymatrix{
\Ext^*_{\PP_\kk}(F,G)\otimes \Ext^*_{\PP_\kk}(X^{(r)},Y^{(r)})\ar[d]^-{\kappa}\ar[r]^-{\cup}&\Ext^*_{\PP_\kk}(F\otimes X^{(r)},G\otimes Y^{(r)})\\
\Ext^*_{\PP_\kk(2)}(F\boxtimes X^{(r)},G\boxtimes Y^{(r)})\ar[ru]_-{-_\Delta}
}\;.$$
In particular, theorem \ref{thm-1} is a consequence of the following slightly more general statement, in which the K\"unneth condition is removed.
\begin{theorem}\label{thm-1-ter}
Let $F,G,X,Y$ be homogeneous functors, and let $r\ge 0$. Diagonal evaluation induces a graded injective map:
$$\Ext^*_{\PP_\kk(2)}(F\boxtimes X^{(r)},G\boxtimes Y^{(r)})\hookrightarrow \Ext^*_{\PP_\kk}(F\otimes X^{(r)},G\otimes Y^{(r)})\;.$$
Moreover, this graded injective map is an isomorphism in degree $k$ in the situations listed in theorem \ref{thm-1}.
\end{theorem}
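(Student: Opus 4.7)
The plan is to apply the sum--diagonal adjunction $(-)_\Delta \dashv (-)_\boxplus$ from Section \ref{subsec-cross}, giving
$$\Ext^*_{\PP_\kk}(F\otimes X^{(r)}, G\otimes Y^{(r)}) \simeq \Ext^*_{\PP_\kk(2)}(F\boxtimes X^{(r)}, (G\otimes Y^{(r)})_\boxplus),$$
under which the diagonal evaluation map corresponds to post-composition with the unit $\eta: G\boxtimes Y^{(r)} \to (G\otimes Y^{(r)})_\boxplus$. A bidegree analysis of $(G\otimes Y^{(r)})_\boxplus = G_\boxplus \otimes (Y^{(r)})_\boxplus$, combined with the observation that $(Y^{(r)})_\boxplus$ has only bidegrees divisible by $p^r$ (since $I^{(r)}$ is additive), shows that $\eta$ embeds $G\boxtimes Y^{(r)}$ as the bidegree $(d_G, p^rd_Y)$ direct summand of the target. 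Since $\eta$ is then a split monomorphism of bifunctors, postcomposition with $\eta$ is split mono on $\Ext^*$, proving the injectivity statement.

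For the isomorphism claim, let $C$ denote the direct-sum complement of $G\boxtimes Y^{(r)}$ inside the bidegree $(d_F, p^rd_X)$-part of $(G\otimes Y^{(r)})_\boxplus$ (or the whole such part when $d_F \neq d_G$, in which case the source is trivially zero). It remains to show $\Ext^k_{\PP_\kk(2)}(F\boxtimes X^{(r)}, C) = 0$ in the required range. Each summand of $C$ is of the form $(G_\boxplus)_{(a,b)}\otimes ((Y_\boxplus)_{(d_Y-\beta,\beta)})^{(r,r)}$, and direct bookkeeping (using $d_F + p^rd_X = d_G + p^rd_Y$) shows that every such summand carries a Frobenius-twisted contribution of positive degree on at least one of the two variables: on the second variable in Case 1, on the first in Case 2, and on both simultaneously in Case 3.

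The crucial vanishing is a weight computation: for a nonzero $p^r$-bounded tuple $\mu$ and any bifunctor $D$ whose first-variable part factors through a positive-degree Frobenius twist, the $\mu$-weight subspace $D(\kk^{\ell(\mu)},-)_\mu$ vanishes, since matching $\mu_i < p^r$ against a $p^r$-divisible weight contribution is impossible. Applied via $\hom_{\PP_\kk}(\Gamma^\mu, F) = F(\kk^{\ell(\mu)})_\mu$ and its sum-diagonal analogue for $\Gamma^\tau$, this yields the dual statement $\hom_{\PP_\kk}(X^{(r)}, S^\tau\otimes H_0^{(r)}) = 0$ for every nonzero $p^r$-bounded $\tau$ and every functor $H_0$. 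Combining these vanishings with a projective resolution $P_\bullet \to F$ whose first $p(F,r)$ terms are left $p^r$-bounded and an injective resolution $G\to J^\bullet$ whose first $i(G,r)$ terms are right $p^r$-bounded, a bi-complex (or hypercohomology spectral sequence) argument yields the required Ext vanishing: in Cases 1 and 2 only one of the two resolutions is needed, as the degree mismatch alone forces the Frobenius condition on a specific variable, whereas in Case 3 the two resolutions combine and the additive bound $p(F,r)+i(G,r)$ arises from the spectral sequence, every term $(i,j)$ with $i + j < p(F,r) + i(G,r)$ satisfying $i < p(F,r)$ or $j < i(G,r)$ and hence falling into one of the two regimes where a key vanishing applies. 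The main technical obstacle will be executing this bi-complex argument precisely — in particular, propagating the $\hom$-level vanishing from the key lemma to higher Ext along the $i(G,r)$-direction using Kunneth on box-product resolutions of $(Y_\boxplus)_\beta$ in $\PP_\kk(2)$, so that the three stated bounds come out exactly.
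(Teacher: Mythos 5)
Your setup is the same as the paper's: the sum--diagonal adjunction identifies diagonal evaluation with postcomposition by the split monomorphism $\eta:G\boxtimes Y^{(r)}\to (G\otimes Y^{(r)})_\boxplus$, which gives injectivity, and the complement $C$ you describe is exactly the cokernel computed in lemma \ref{lm-cup}. Your ``crucial vanishing'' is also the right one (it is lemma \ref{lm-annul-prelim}, and your weight-space proof of $\hom_{\PP_\kk}(\Gamma^\mu,U^{(r)})=0$ is a legitimate alternative to the reference used there). The gap is in the step you yourself flag as ``the main technical obstacle'': propagating the $\hom$-level vanishing to higher $\Ext$ and combining the two resolutions. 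Concretely, once you fix the target-side decomposition of $C$, the terms on which the coresolution $G\to J^\bullet$ is supposed to act have the form $((J^j)_\boxplus)^{(d_1,d_2)}\otimes (Y^{(r)}_\boxplus)^{(e_1,e_2)}$ with $d_2>0$; this is \emph{not} separable and $J^j$ is not a full tensor factor of it, so the target is not injective and ``$\hom$-vanishing for each term of a (co)resolution'' is not available. Your freezing argument gives the degree-zero vanishing $\hom(X^{(r)},S^\nu\otimes H_0^{(r)})=0$ only while the full twisted functor $X^{(r)}$ sits in the source; the moment you replace the source by a projective bi-resolution $P_i\boxtimes Q_k$ to reach higher $\Ext$, the cochain groups $\hom(Q_k,S^\nu\otimes H_0^{(r)})$ are nonzero and the argument collapses. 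So the mixed terms of your bicomplex ($i\ge p(F,r)$, $j<i(G,r)$) are not covered by any lemma you have stated, and the additive bound in Case 3 is not established.

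The paper avoids this entirely by exploiting that the cokernel has \emph{two} descriptions (lemma \ref{lm-cup} and lemma \ref{lm-cok}): one with the $\boxplus$-decomposition in the target (source $F\boxtimes X^{(r)}$ separable) and one with it in the source (target $G\boxtimes Y^{(r)}$ separable). In the first description, replacing $F$ by its left $p^r$-bounded projectives $P_i$ keeps the source separable with a bounded projective as a full factor, so lemma \ref{lm-annul} kills all degrees and one dimension-shifts by $p(F,r)$; then --- and this is the idea your proposal is missing --- the shifted groups $\Ext^{*}(\overline{F}\boxtimes X^{(r)},(G_\boxplus)^{(d_1,d_2)}\otimes(Y^{(r)}_\boxplus)^{(e_1,e_2)})$ are \emph{re-read via lemma \ref{lm-cup} as the cokernel of a new diagonal-evaluation map} for the quadruple $(\overline{F},G,X,Y)$, to which one applies the \emph{other} description, where coresolving $G$ produces separable targets $J^j\boxtimes Y^{(r)}$ handled by the first half of lemma \ref{lm-annul}. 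This pivot is what makes $p(F,r)+i(G,r)$ come out while keeping every vanishing at the level of separable (co)resolutions. To salvage your single-decomposition route you would need the genuinely stronger statement $\Ext^{*}_{\PP_\kk(2)}(A\boxtimes X^{(r)},((J)_\boxplus)^{(d_1,d_2)}\otimes E^{(r,r)})=0$ in \emph{all} degrees for arbitrary $A$ (not just bounded projectives) --- provable, e.g., by reducing via the adjunction $\Ext^*_{\PP_\kk(2)}(\Gamma^{d,V}\boxtimes A,T)\simeq\Ext^*_{\PP_\kk}(A,T(V,-))$ to $\Ext^*_{\PP_\kk}(X^{(r)},S^\nu\otimes H^{(r)})=0$ and then dualizing and resolving the twisted factor --- but that lemma, which is the heart of the matter, is exactly what your proposal does not supply.
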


The proof of theorem \ref{thm-1-ter} relies on a series of lemmas. The proofs of these lemmas are all based upon the sum-diagonal adjunction technique recalled in section \ref{subsec-cross}.
\begin{lemma}\label{lm-cup}
Let $F,G,F',G'$ be homogeneous functors satifying $\deg F=\deg G$ and $\deg F'=\deg G'$. Diagonal evaluation yields an injective map
$$\Ext^*_{\PP_\kk(2)}(F\boxtimes F',G\boxtimes G')\hookrightarrow \Ext^*_{\PP_\kk}(F\otimes F',G\otimes G')$$
whose cokernel is isomorphic to following direct sum, indexed by the tuples of nonnegative integers $(d_1,d_2,e_1,e_2)$ such that $d_2>0$ and $e_1>0$:
$$\bigoplus_
{\text{\footnotesize
$
\begin{array}{c}
0< d_2,e_1\\
0\le d_1,e_2
\end{array}
$
}}
\Ext^*_{\PP_\kk(2)}\left(F\boxtimes F', (G_\boxplus)^{(d_1,d_2)}\otimes (G'_\boxplus)^{(e_1,e_2)}\right)\;. $$
This cokernel is also isomorphic to the direct sum:
$$\bigoplus_
{\text{\footnotesize
$
\begin{array}{c}
0< d_2,e_1\\
0\le d_1,e_2
\end{array}
$
}}
\Ext^*_{\PP_\kk(2)}\left((F_\boxplus)^{(d_1,d_2)}\otimes (F'_\boxplus)^{(e_1,e_2)},G\boxtimes G'\right)\;. $$
\end{lemma}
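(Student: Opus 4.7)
The plan is to use the sum-diagonal adjunction recalled in Section~\ref{subsec-cross} to reinterpret the right-hand $\Ext$, then decompose by bidegree. Since $\Delta$ and $\boxplus$ are exact and biadjoint, applying the adjunction with $B=F\boxtimes F'$ (so that $B_\Delta=F\otimes F'$) and $H=G\otimes G'$, together with the identification $(G\otimes G')_\boxplus = G_\boxplus\otimes G'_\boxplus$, yields a graded isomorphism
\[
\Ext^*_{\PP_\kk}(F\otimes F', G\otimes G')\;\simeq\; \Ext^*_{\PP_\kk(2)}(F\boxtimes F', G_\boxplus \otimes G'_\boxplus).
\]
A direct computation from the explicit form of the unit of adjunction (the morphism $F(V)\otimes F'(W)\to F(V\oplus W)\otimes F'(V\oplus W)$ induced by the canonical inclusions of $V$ and $W$ into $V\oplus W$), combined with the naturality of any bifunctor morphism $F\boxtimes F'\to G\boxtimes G'$, shows that under this isomorphism diagonal evaluation $-_\Delta$ corresponds to postcomposition with the canonical inclusion $\iota:G\boxtimes G'\hookrightarrow G_\boxplus\otimes G'_\boxplus$ given by $G(i_V)\otimes G'(i_W)$.

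Next, I decompose $G_\boxplus=\bigoplus_{d_1+d_2=\deg G}(G_\boxplus)^{(d_1,d_2)}$ and similarly for $G'_\boxplus$, yielding a splitting of $G_\boxplus\otimes G'_\boxplus$ into summands $(G_\boxplus)^{(d_1,d_2)}\otimes (G'_\boxplus)^{(e_1,e_2)}$ of bidegree $(d_1+e_1,d_2+e_2)$. The canonical isomorphisms $(G_\boxplus)^{(\deg G,0)}\simeq G\boxtimes\kk$ and $(G'_\boxplus)^{(0,\deg G')}\simeq\kk\boxtimes G'$ identify $\iota$ with the inclusion of the summand indexed by $(d_1,d_2,e_1,e_2)=(\deg G,0,0,\deg G')$. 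Since $F\boxtimes F'$ has bidegree $(\deg G,\deg G')$ and $\PP_\kk(2)$ decomposes as a direct sum by bidegree, $\Ext^*$ vanishes on summands of any other bidegree; combined with $d_1+d_2=\deg G$ and $e_1+e_2=\deg G'$ this forces $d_2=e_1$. Hence $\iota_*$ identifies the image of $-_\Delta$ with the $d_2=e_1=0$ summand, and the complementary summands (those with $d_2>0$ and $e_1>0$) form precisely the first direct-sum description of the cokernel. Injectivity of $-_\Delta$ is automatic since it is realized as the inclusion of a direct summand.

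For the second description, the dual adjunction $\Ext^*_{\PP_\kk(2)}(H_\boxplus,B)\simeq \Ext^*_{\PP_\kk}(H,B_\Delta)$ applied with $H=F\otimes F'$ and $B=G\boxtimes G'$ gives
\[
\Ext^*_{\PP_\kk}(F\otimes F', G\otimes G')\;\simeq\; \Ext^*_{\PP_\kk(2)}(F_\boxplus\otimes F'_\boxplus, G\boxtimes G'),
\]
and the same bidegree bookkeeping applied in the source---identifying $F\boxtimes F'$ as the summand indexed by $(\deg F,0,0,\deg F')$ of $F_\boxplus\otimes F'_\boxplus$---produces the second form of the cokernel. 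The only substantive step in the whole argument is the identification, in the first paragraph, of diagonal evaluation with postcomposition by $\iota$ at the $\hom$ level; the passage to $\Ext^*$ is then automatic from the exactness of $\Delta$ and $\boxplus$, and everything else is formal bidegree bookkeeping based on Section~\ref{subsec-cross}.
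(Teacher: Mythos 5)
Your proposal is correct and follows essentially the same route as the paper: factor diagonal evaluation through the sum-diagonal adjunction as postcomposition with the split inclusion $G\boxtimes G'\hookrightarrow (G\otimes G')_\boxplus=G_\boxplus\otimes G'_\boxplus$, read off injectivity and the cokernel from the direct-sum decomposition into bidegree components, and use the bidegree constraint to replace the index condition ``$d_2>0$ or $e_1>0$'' by ``$d_2>0$ and $e_1>0$''; the second description via the dual adjunction is likewise the paper's (omitted) argument. The only difference is that you verify explicitly, via naturality of the unit of adjunction, that $-_\Delta$ corresponds to $\eta_*$, a point the paper simply asserts.
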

\begin{proof}
We recall the proof of injectivity from \cite{TouzeClassical} and we prove the first description of the cokernel. The proof of the second description is similar. The map given by diagonal evaluation is equal to the composite of the map
$$\eta_*:\Ext^*_{\PP_\kk(2)}(F\boxtimes F',G\boxtimes G')\to \Ext^*_{\PP_\kk(2)}(F\boxtimes F',(G\otimes G')_\boxplus)$$
induced by the canonical map $\eta:G\boxtimes G'\to (G\otimes G')_\boxplus$, together with the adjunction isomorphism
$$\Ext^*_{\PP_\kk(2)}(F\boxtimes F',(G\otimes G')_\boxplus)\simeq \Ext^*_{\PP_\kk}(F\otimes F',G\otimes G')\;.$$ 
Thus, to prove lemma \ref{lm-cup}, it suffices to prove that $\eta_*$ is injective and to identify its cokernel. But $(G\otimes G')_\boxplus = G_\boxplus\otimes G'_\boxplus$, and there is a decomposition
$$(G\otimes G')_\boxplus = G\boxtimes G' \oplus \bigoplus_{d_2>0 \text{ or } e_1>0} (G_\boxplus)^{(d_1,d_2)}\otimes (G'_\boxplus)^{(e_1,e_2)}\;.$$
The map $\eta$ identifies with the inclusion of $G\boxtimes G'$ into the right hand side, and since the decomposition is a direct sum, $\eta$, hence $\eta_*$ admits a section, and the cokernel of $\eta_*$ is isomorphic to:
$$\bigoplus_{d_2>0 \text{ or } e_1>0} \Ext^*_{\PP_\kk(2)}(F\boxtimes F',  (G_\boxplus)^{(d_1,d_2)}\otimes (G'_\boxplus)^{(e_1,e_2)})\;.$$
This is almost the description of the cokernel given in lemma \ref{lm-cup}, but the summation index is different. Since there are no nonzero extensions between homogeneous bifunctors of different degrees, all the terms in the direct sum are zero, except the ones satisfying 
$d_1+e_1=\deg F $ and $d_2+e_2=\deg F'$. Since $d_1+d_2=\deg G=\deg F$, the nonzero terms in the direct sum satisfy $e_1=d_2$. Thus we can replace the summation index `$d_2>0$ or $e_1>0$' by `$e_1>0$ and $d_2>0$' and we are done. 
\end{proof}

The proof of the next lemma is omitted since it is very similar to the proof of lemma \ref{lm-cup}.
\begin{lemma}\label{lm-cok}
Let $F,F',G,G'$ be homogeneous functors. If $\deg F>\deg G$,
then  $\Ext^*_{\PP_\kk}(F\otimes F',G\otimes G')$ is isomorphic to the following direct sum, indexed by the tuples of nonnegative integers $(d_1,d_2,e_1,e_2)$ such that $e_1>0$:
$$\bigoplus_{\text{\footnotesize
$
\begin{array}{c}
0< e_1\\
0\le d_1,d_2,e_2
\end{array}
$
}}\Ext^*_{\PP_\kk}(F\boxtimes F',(G_\boxplus)^{(d_1,d_2)}\otimes (G'_\boxplus)^{(e_1,e_2)})\;.$$
If $\deg F<\deg G$ then it is isomorphic to:
$$\bigoplus_{\text{\footnotesize
$
\begin{array}{c}
0< e_1\\
0\le d_1,d_2,e_2
\end{array}
$
}}
\Ext^*_{\PP_\kk}((F_\boxplus)^{(d_1,d_2)}\otimes (F'_\boxplus)^{(e_1,e_2)},G\boxtimes G')\;.$$
\end{lemma}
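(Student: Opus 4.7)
The plan is to follow the same sum-diagonal adjunction strategy used in the proof of Lemma \ref{lm-cup}, but with a key simplification: here we are computing the entire Ext group rather than the cokernel of a natural map, because the ``diagonal'' summand $G\boxtimes G'$ does not contribute when the bidegrees of source and target fail to match. The hypothesis $\deg F \neq \deg G$ ensures precisely that $G\boxtimes G'$ (and more generally every summand with $e_1=0$) has a bidegree incompatible with $F\boxtimes F'$ and thus gives zero Ext.

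Concretely, in the case $\deg F > \deg G$, I would first apply the sum-diagonal adjunction
\[\Ext^*_{\PP_\kk}(F\otimes F',G\otimes G') \simeq \Ext^*_{\PP_\kk(2)}(F\boxtimes F',(G\otimes G')_\boxplus),\]
then use the identity $(G\otimes G')_\boxplus = G_\boxplus \otimes G'_\boxplus$ (already noted in the proof of Lemma \ref{lm-cup}) and expand the latter into its bihomogeneous components:
\[G_\boxplus \otimes G'_\boxplus = \bigoplus_{d_1,d_2,e_1,e_2\ge 0} (G_\boxplus)^{(d_1,d_2)}\otimes (G'_\boxplus)^{(e_1,e_2)}.\]
Distributing $\Ext$ over this direct sum and using that $\Ext$ between bifunctors of different bidegrees vanishes, the only nonzero contributions come from summands of bidegree $(\deg F,\deg F')$. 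Any summand with $e_1=0$ has first-coordinate bidegree $d_1\le \deg G<\deg F$ and so contributes zero; hence the indexing set can be freely restricted to $e_1>0$ without changing the sum, which is exactly the formula claimed.

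The case $\deg F<\deg G$ is treated dually: I would apply the other sum-diagonal adjunction
\[\Ext^*_{\PP_\kk}(F\otimes F',G\otimes G') \simeq \Ext^*_{\PP_\kk(2)}((F\otimes F')_\boxplus,G\boxtimes G'),\]
expand $(F\otimes F')_\boxplus = F_\boxplus \otimes F'_\boxplus$ into bihomogeneous components, and run the same bidegree vanishing argument against the fixed target bidegree $(\deg G,\deg G')$, forcing the surviving summands to satisfy $e_1 \ge \deg G - \deg F > 0$.

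There is essentially no obstacle beyond careful bookkeeping: once the sum-diagonal adjunction and the bihomogeneous decomposition from section \ref{subsec-cross} are in hand, the argument is mechanical. The one point requiring attention is checking that writing the direct sum over all tuples with $e_1>0$ (and no further constraint) is consistent with the vanishing of the non-matching bidegree summands, so that no surviving contribution is accidentally dropped and no spurious nonzero term is added.
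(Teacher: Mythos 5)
Your proof is correct and follows essentially the same route the paper intends (the paper omits the proof, noting it is "very similar to the proof of lemma \ref{lm-cup}"): sum-diagonal adjunction, the decomposition $(G\otimes G')_\boxplus=G_\boxplus\otimes G'_\boxplus$ into bihomogeneous summands, and the observation that the bidegree constraint $d_1+e_1=\deg F$ together with $d_1\le\deg G<\deg F$ forces $e_1>0$, so that restricting the index set to $e_1>0$ loses nothing. The dual treatment of the case $\deg F<\deg G$ is likewise exactly what the paper has in mind.
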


The next two vanishing lemmas are analogues of the key vanishing result (i.e. Pirashvili's vanishing) of \cite[Thm 2.13]{FS}. 
\begin{lemma}\label{lm-annul-prelim}
Let $F$ and $G$ be homogeneous functors with $\deg G >0$ and let $\lambda$ be a $p^r$-bounded tuple. Then 
$$\hom_{\PP_\kk}(F\otimes G^{(r)}, S^\lambda)=0=\hom_{\PP_\kk}(\Gamma^\lambda,F\otimes G^{(r)})\;.$$
\end{lemma}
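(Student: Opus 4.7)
The plan is to apply the sum-diagonal adjunction in $n$ variables, where $n$ is the length of the tuple $\lambda=(\lambda_1,\dots,\lambda_n)$, and then to read off the conclusion from a multi-degree comparison. Since $\Gamma^\lambda$ and $S^\lambda$ are the diagonal evaluations of the separable $n$-functors $\Gamma^{\lambda_1}\boxtimes\cdots\boxtimes\Gamma^{\lambda_n}$ and $S^{\lambda_1}\boxtimes\cdots\boxtimes S^{\lambda_n}$, the straightforward $n$-variable generalization of the adjunction recalled in section \ref{subsec-cross} yields natural isomorphisms
\begin{align*}
\hom_{\PP_\kk}(\Gamma^\lambda, F\otimes G^{(r)}) &\simeq \hom_{\PP_\kk(n)}\bigl(\Gamma^{\lambda_1}\boxtimes\cdots\boxtimes\Gamma^{\lambda_n},\, (F\otimes G^{(r)})_{\boxplus}\bigr),\\
\hom_{\PP_\kk}(F\otimes G^{(r)}, S^\lambda) &\simeq \hom_{\PP_\kk(n)}\bigl((F\otimes G^{(r)})_{\boxplus},\, S^{\lambda_1}\boxtimes\cdots\boxtimes S^{\lambda_n}\bigr).
\end{align*}

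Next I would analyze the multi-homogeneous components of $(F\otimes G^{(r)})_{\boxplus}$. Since $I^{(r)}$ is additive on vector spaces, one has $G^{(r)}(V_1\oplus\cdots\oplus V_n)=G(V_1^{(r)}\oplus\cdots\oplus V_n^{(r)})$, so every nonzero multi-homogeneous component of $G^{(r)}_\boxplus$ has multi-degree $(p^r\beta_1,\dots,p^r\beta_n)$ with $|\beta|=\deg G$. Tensoring with $F_\boxplus$, whose components have multi-degrees $(\alpha_1,\dots,\alpha_n)$ with $|\alpha|=\deg F$, one concludes that every nonzero multi-homogeneous component of $(F\otimes G^{(r)})_{\boxplus}$ lies in a multi-degree of the form $(\alpha_i+p^r\beta_i)_{i=1}^n$ for some such $\alpha,\beta$.

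The conclusion then reduces to a one-line arithmetic check: the $n$-functors $\Gamma^{\lambda_1}\boxtimes\cdots\boxtimes\Gamma^{\lambda_n}$ and $S^{\lambda_1}\boxtimes\cdots\boxtimes S^{\lambda_n}$ are pure of multi-degree $(\lambda_1,\dots,\lambda_n)$, so nonvanishing of either Hom requires $\alpha_i+p^r\beta_i=\lambda_i$ for every $i$ together with $|\beta|=\deg G$. But the constraint $\lambda_i<p^r$ with $\alpha_i,\beta_i\ge 0$ forces $\beta_i=0$ for every $i$, whence $|\beta|=0$, contradicting $\deg G>0$. There is no real obstacle in this argument; the only point that deserves some care is the iteration of the two-variable sum-diagonal adjunction to $n$ variables, and the rest reduces to the slogan that precomposition by $I^{(r)}$ constrains multi-degrees to be multiples of $p^r$ coordinatewise, which is incompatible with a $p^r$-bounded target as soon as $G$ is nonconstant.
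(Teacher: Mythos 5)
Your argument is correct, and it takes a genuinely different (and more self-contained) route than the paper. The paper first isolates the one-variable vanishing $\hom_{\PP_\kk}(G^{(r)},S^\nu)=0$ for $p^r$-bounded $\nu$, quoting it from the literature (while remarking it could also be obtained by sum-diagonal adjunction), then applies the \emph{two}-variable adjunction to get $\hom_{\PP_\kk}(F\otimes G^{(r)},S^\lambda)\simeq\hom_{\PP_\kk(2)}(F\boxtimes G^{(r)},(S^\lambda)_\boxplus)$, decomposes $(S^\lambda)_\boxplus$ into summands $S^\mu\boxtimes S^\nu$ with $\mu,\nu$ both $p^r$-bounded, and kills each summand by freezing the first variable and invoking the quoted vanishing. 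You instead expand $F\otimes G^{(r)}$ over $n$ variables while keeping $S^\lambda$ (resp. $\Gamma^\lambda$) in separable form, so that the whole lemma collapses to the multi-degree observation that the components of $(F\otimes G^{(r)})_{\boxplus}$ sit in multi-degrees $(\alpha_i+p^r\beta_i)_i$ with $|\beta|=\deg G$, which cannot equal a coordinatewise $p^r$-bounded $\lambda$ unless every $\beta_i=0$. What your version buys is a proof with no external citation and no variable-freezing step; the only thing it requires beyond the paper's stated toolkit is the $n$-variable form of the sum-diagonal adjunction and of the multi-degree decomposition, both of which the paper explicitly delegates to the reader as routine, so there is no gap.
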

\begin{proof}
We prove the first equality, the proof of the second one is similar. We will use that for all homogeneous $G$ of positive degree and for all $p^r$-bounded tuple $\nu$, 
$$\hom_{\PP_\kk}(G^{(r)},S^\nu)=0\;. \qquad(*)$$
This is proved when $G$ has finite dimensional values in \cite[lm 2.3]{TouzeTroesch}, and it holds for an arbitrary $G$ because any functor is the colimit of its finite subfunctors. (Alternatively, one could also prove this vanishing by sum-diagonal adjunction.)
To reduce the equality of lemma \ref{lm-annul-prelim} to formula $(*)$, we proceed as follows. First, sum-diagonal adjunction yields an isomorphism:
$$\hom_{\PP_\kk}(F\otimes G^{(r)}, S^\lambda)\simeq \hom_{\PP_\kk}(F\boxtimes G^{(r)}, (S^\lambda)_\boxplus)\;.$$
We observe that $(S^\lambda)_\boxplus$ decomposes as a direct sum of tensor products of the form $S^\mu\boxtimes S^\nu$ with $\mu$ and $\nu$ $p^r$ bounded. Thus lemma \ref{lm-annul-prelim} will be proved if we can prove that $\hom_{\PP_\kk}(F\boxtimes G^{(r)},S^\mu\boxtimes S^\nu)$ is zero when $\mu$ and $\nu$ are $p^r$-bounded. So let $\phi:F\boxtimes G^{(r)}\to S^\mu\boxtimes S^\nu$ be a morphism. By freezing the first variable of the bifunctors, we obtain for all $V$ a morphism of functors $\phi_V:F(V)\otimes G^{(r)}(-)\to S^\mu(V)\otimes S^\nu(-)$. By formula $(*)$, $\phi_V$ must be zero for all $V$. In particular, $\phi$ must be zero.
\end{proof}

\begin{lemma}\label{lm-annul}
Let $r$ be a positive integer, let $J$ be a be a right $p^r$-bounded injective functor, let $P$ be a left $p^r$-bounded projective functor, let $Z$ be a homogeneous functor let $B$ and $C$ be two homogeneous bifunctors. If $\deg C=(e_1,e_2)$ with $e_1>0$, and $C^{(r)}$ denotes the bifunctor $(V,W)\mapsto C(V^{(r)},W^{(r)})$ then  
$$\Ext^*_{\PP_\kk(2)}(B\otimes C^{(r)},J\boxtimes Z)=0= \Ext^*_{\PP_\kk(2)}(P\boxtimes Z,B\otimes C^{(r)})\;.$$
\end{lemma}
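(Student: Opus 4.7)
The plan is to reduce each vanishing to the one-variable lemma \ref{lm-annul-prelim} via two standard moves: (i) replace $\Ext^*$ by $\hom$ using a resolution of the box-product side built from $J\boxtimes -$ (resp.\ $P\boxtimes -$); (ii) reduce bifunctor $\hom$ to one-variable $\hom$ by freezing the second argument. Freezing is applied to the second variable because the hypothesis $e_1>0$ rewrites $C^{(r)}(-,W)$ as $C(-,W^{(r)})^{(r)}$, i.e.\ the Frobenius twist of a functor of positive degree $e_1$, exactly the input shape of lemma \ref{lm-annul-prelim}.

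For the first vanishing, pick an injective resolution $Z\to Z^\bullet$ in $\PP_\kk$. Since $J$ is a summand of a direct sum $\bigoplus_\mu S^\mu$ over $p^r$-bounded $\mu$ (lemma \ref{lm-elem}), and since the $S^\mu\boxtimes S^\nu$ are the basic injectives of $\PP_\kk(2)$, each $J\boxtimes Z^i$ is injective; exactness of $J\boxtimes -$ then makes $J\boxtimes Z^\bullet$ an injective resolution of $J\boxtimes Z$. So it suffices to prove $\hom_{\PP_\kk(2)}(B\otimes C^{(r)},J\boxtimes M)=0$ for every homogeneous $M$. Such a morphism is determined by its restrictions
$$\phi_W\colon B(-,W)\otimes C(-,W^{(r)})^{(r)}\longrightarrow J(-)\otimes M(W)$$
obtained by freezing $W$. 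Setting $F=B(-,W)\otimes C(-,W^{(r)})^{(r)}$, lemma \ref{lm-annul-prelim} (with $G=C(-,W^{(r)})$ of positive degree $e_1$) gives $\hom_{\PP_\kk}(F,S^\lambda)=0$ for $\lambda$ $p^r$-bounded. Filter $F$ by its finite subfunctors $F_\alpha$: injectivity of $S^\lambda$ gives a surjection $\hom(F,S^\lambda)\twoheadrightarrow\hom(F_\alpha,S^\lambda)$, so $\hom(F_\alpha,S^\lambda)=0$; since $F_\alpha$ is finite (hence compact), $\hom(F_\alpha,-)$ commutes with direct sums and filtered colimits, yielding $\hom(F_\alpha,J)=0$ and then $\hom(F_\alpha,J\otimes M(W))=\hom(F_\alpha,J)\otimes M(W)=0$; taking the limit over $\alpha$ gives $\hom_{\PP_\kk}(F,J(-)\otimes M(W))=0$, so $\phi_W=0$.

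The second vanishing is dual and simpler. Take a projective resolution $P^\bullet\to Z$ in $\PP_\kk$; exactness of $P\boxtimes -$ and the fact that the $\Gamma^\lambda\boxtimes\Gamma^\mu$ are the basic projectives of $\PP_\kk(2)$ make $P\boxtimes P^\bullet$ a projective resolution of $P\boxtimes Z$, reducing the problem to showing $\hom_{\PP_\kk(2)}(P\boxtimes N,B\otimes C^{(r)})=0$. Freezing $W$ and writing $P$ as a summand of $\bigoplus_\lambda\Gamma^\lambda$ over $p^r$-bounded $\lambda$, the hom in question becomes a summand of $\prod_\lambda\hom_{\PP_\kk}(\Gamma^\lambda,B(-,W)\otimes C(-,W^{(r)})^{(r)})$, which vanishes factor by factor by the second half of lemma \ref{lm-annul-prelim}. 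The main technical point in the whole argument is the infinite direct sum $\bigoplus_\mu S^\mu$ that appears on the target of the first vanishing, since $\hom$ does not commute with such sums in general; the filtering-plus-injectivity argument above is the essential ingredient that rescues the exchange. The projective side avoids this difficulty because $\hom$ automatically turns direct sums in the source into products.
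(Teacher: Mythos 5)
Your proof is correct and follows essentially the same route as the paper's: reduce to degree zero by resolving $Z$ and tensoring with $J$ (resp.\ $P$), then freeze one variable of the bifunctors and invoke lemma \ref{lm-annul-prelim}. Your extra care --- freezing the second variable so that the hypothesis $e_1>0$ lines up with the twisted factor of positive degree, and the finite-subfunctor argument handling the possibly infinite direct sums hidden in $J$ and in $M(W)$ --- fills in details the paper leaves implicit.
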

\begin{proof}
We will prove the first equality, the proof of the second one is similar. 
If $J_Z$ is an injective resolution of the functor $Z$, then $J\boxtimes J_Z$ is an injective resolution of the bifunctor $J\boxtimes Z$. Thus, it is sufficient to do the proof in degree zero (i.e. for $\hom$) and when $Z$ is injective, the general case will follow by taking resolutions. 
So let us take a morphism of bifunctors $\phi:B\otimes C^{(r)}\to J\boxtimes Z$. Then by freezing the first variable of the bifunctors, we obtain for all $V$ a morphism of functors:
$$\phi_V:B(V,-)\otimes C^{(r)}(V,-)\to J(V)\otimes Z(-)\;.$$
But by lemma \ref{lm-annul-prelim} $\phi_V$ is zero for all $V$. In particular, $\phi$ must be zero.
\end{proof}

\begin{proof}[Proof of theorem \ref{thm-1-ter}]
By lemma \ref{lm-cup}, diagonal evaluation yields an injective morphism on the $\Ext$-level (if $\deg F\ne \deg G$ or $\deg X\ne \deg Y$ the source of the cup product morphism is zero for degree reasons, so that injectivity is trivial). Hence, it remains to prove the cancellation in low degrees of the cokernel, described in lemmas \ref{lm-cup} and \ref{lm-cok}.

Assume that $\deg F\ge \deg G$. Take a finite resolution of $F$ of the form:
$$ 0\to \overline{F}\to F_{p(F,r)-1}\to \dots\to F_0\to F\to 0 $$
where the functors $F_k$ with $k< p(F,r)$ are right $p^r$-bounded projective functors. Take $B$ and $C$ as in lemma \ref{lm-annul}. By using long exact sequences, we obtain that for all $k\in\Z$ (with the convention that $\Ext$ are zero in nonpositive degrees):
$$\Ext^{*}_{\PP_\kk(2)}(F\boxtimes X^{(r)},B\otimes C^{(r)})\simeq \Ext^{*-p(F,r)}_{\PP_\kk(2)}(\overline{F}\boxtimes X^{(r)},B\otimes C^{(r)})\;.\quad(*)$$
In particular the $\Ext$ on the left hand side are $(p(F,r)-1)$-connected, i.e. zero in degrees $*<p(F,r)$. By lemmas \ref{lm-cup} and \ref{lm-cok}, the case where $B=(F_\boxplus)^{(d_1,d_2)}$ and $C^{(r)}=(Y^{(r)}_\boxplus)^{(e_1,e_2)}$ with $e_1>0$ implies that the cup product induces an isomorphism in degrees less than $p(F,r)$. A similar argument shows that the cup product is an isomorphism in degrees less than $i(G,r)$ if $\deg F\le \deg G$. 
Assume now that $\deg F=\deg G$. By lemma \ref{lm-cup} and isomorphism $(*)$, the statement of theorem \ref{thm-1-ter} is equivalent to showing that 
$$\Ext^*_{\PP_\kk(2)}\left(\overline{F}\boxtimes X^{(r)}, (G_\boxplus)^{(d_1,d_2)}\otimes (Y^{(r)}_\boxplus)^{(e_1,e_2)}\right)$$
is $(i(G,r)-1)$-connected for $d_2>0$ and $e_1>0$. By lemma \ref{lm-cup} again, this is equivalent to showing that the cup product
$$\Ext^*_{\PP_\kk}(\overline{F},G)\otimes\Ext^*_{\PP_\kk}(X^{(r)},Y^{(r)})\to \Ext^*_{\PP_\kk}(\overline{F}\otimes X^{(r)},G\otimes Y^{(r)})$$
is an isomorphism in degrees less than $i(G,r)$. But we have already proved that the latter holds since $\deg \overline{F}\le \deg G$.
\end{proof}

\section{An equivalent definition of $p(F,r)$ and $i(F,r)$}\label{sec-eq-def}
Next proposition gives an equivalent definition of $p(F,r)$ and $i(F,r)$. While the proof of theorem \ref{thm-1} really relies on definition \ref{def-bounded} given before, this new definition is useful to apply theorem \ref{thm-1} in concrete situations. In particular, the translation of theorem \ref{thm-1} in low degrees given in corollary \ref{thm-1bis} below will be used in sections \ref{sec-tensS} and \ref{sec-tensint}. 

Recall that a partition $\lambda=(\lambda_1,\dots,\lambda_n)$ is \emph{$p^r$-restricted} (for some nonnegative integer $r$) if $\lambda_n< p^r$ and for $i<n$, $\lambda_i-\lambda_{i+1}<p^r$. By convention the partition $(0)$ is $p^r$-restricted for all $r\ge 0$. Using euclidean division, one sees that any partition $\lambda$ can be written in a unique way as a sum $\lambda=\lambda^{0}+p^r\lambda^1$, where $\lambda^0$ is $p^r$-restricted. A simple indexed by a $p^r$-restricted partition will be loosely called a $p^r$-restricted simple. The proof of proposition \ref{prop-eq} relies on two classical fundamental results on simple polynomial representations in positive characteristic. We state them here with references to the literature, but we prove in appendix \ref{app-Stein} that both of them can actually be derived from theorem \ref{thm-1}. 
\begin{enumerate}
\item Steinberg tensor product theorem \cite[II.3.17]{Jantzen}. If $\lambda$ is a $p^r$-restricted  and $\mu$ is an arbitrary partition, then $L_\lambda\otimes L_\mu^{(r)}$ is isomorphic to $L_{\lambda+p^r\mu}$.
\item Clausen and James' theorem \cite{Clausen,James}. A partition $\lambda$ of $d$ is $p$-restricted if and only if $\hom_{\PP}(L_\lambda,\otimes^d)=\hom_{\PP}(\otimes^d,L_\lambda)$ is nonzero.
\end{enumerate}

\begin{proposition}\label{prop-eq}
Let $r$ be a nonnegative integer, and let $F$ be a functor.
\begin{enumerate}
\item The integer $p(F,r)$ is the supremum of all $n\ge 0$ such that $F$ admits a projective resolution $P$, in which the first $n$ objects $P_0,\dots,P_{n-1}$ are direct sums of projective covers of $p^r$-restricted simples.
\item The integer $i(F,r)$ is the supremum of all $n\ge 0$ such that $F$ admits an injective resolution $J$, in which the first $n$ objects $J^0,\dots,J^{n-1}$ direct sums of injective envelopes of $p^r$-restricted simples. 
\end{enumerate}
\end{proposition}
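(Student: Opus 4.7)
The plan is to show that a projective functor is left $p^r$-bounded in the sense of Definition \ref{def-bounded} if and only if it is a direct sum of projective covers of $p^r$-restricted simples, and dually for injectives. Once this class-equality is established, the two candidate definitions of $p(F,r)$ (respectively $i(F,r)$) describe exactly the same admissible resolutions, hence the same supremum.

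First I would reduce to the indecomposable case. If $P$ is projective and left $p^r$-bounded, it is a quotient of some $\bigoplus_\alpha \Gamma^{\lambda_\alpha}$ with each $\lambda_\alpha$ a $p^r$-bounded tuple; being projective, this quotient splits, so $P$ is actually a direct summand. Thus I only need to identify the indecomposable projective summands of the basic $p^r$-bounded projectives $\Gamma^\lambda$. Via the equivalence $\PP_{d,\kk}\simeq S(n,d)\textrm{-Mod}$ (for $n\ge d$), the projective cover of $L_\mu$ appears in $\Gamma^\lambda$ exactly when $\hom_{\PP_\kk}(\Gamma^\lambda,L_\mu)\neq 0$, a hom-space which is the $\lambda$-weight space of $L_\mu(\kk^n)$. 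The question reduces to: $L_\mu$ has a $p^r$-bounded weight if and only if $\mu$ is $p^r$-restricted.

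For the forward direction I would combine Steinberg and Clausen--James (both are available here, as the statement of the proposition records). Given a $p^r$-restricted $\mu$ with $p$-adic expansion $\mu=\sum_{i=0}^{r-1}p^i\mu^i$, Steinberg gives
$$L_\mu\cong L_{\mu^0}\otimes L_{\mu^1}^{(1)}\otimes\dots\otimes L_{\mu^{r-1}}^{(r-1)},$$
and Clausen--James ensures that each $L_{\mu^i}$ is a quotient of $\otimes^{|\mu^i|}$ and hence carries the weight $(1,\dots,1)$ with $|\mu^i|$ ones. Frobenius twisting turns this into a weight of $L_{\mu^i}^{(i)}$ with $|\mu^i|$ entries equal to $p^i$ and the rest zero; by placing these nonzero coordinates into disjoint positions as $i$ varies (feasible because there is no bound on the number of variables), the sum is a weight of $L_\mu$ all of whose entries lie in $\{0,1,p,\dots,p^{r-1}\}$, hence are $<p^r$. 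For the converse, if $\mu$ is not $p^r$-restricted and $i\ge r$ is minimal with $\mu^i\neq 0$, then every weight of $L_{\mu^i}^{(i)}$ has entries divisible by $p^i\ge p^r$ and at least one positive entry, so at least one of its coordinates is $\ge p^r$; since weights of a tensor product are coordinate-wise sums of nonnegative tuples, every weight of $L_\mu$ retains a coordinate $\ge p^r$.

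Once the projective case is done, the injective case is formal: the sharp duality $F\mapsto F^\sharp$ is an exact anti-equivalence on each $\PP_{d,\kk}$ that exchanges $\Gamma^\lambda$ with $S^\lambda$, swaps projective covers with injective envelopes, and preserves simples by the self-duality recalled in \ref{subsec-simple}. The hardest step, I expect, is the Steinberg-plus-Clausen--James weight construction, where one must carefully arrange supports disjointly and check that the constructed weight really descends to the simple quotient $L_\mu$; the remainder is routine bookkeeping about extracting a projective (or injective) resolution with prescribed head (or socle) from one that merely has a $p^r$-bounded head.
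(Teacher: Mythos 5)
Your proof is correct and follows essentially the same route as the paper: both arguments reduce the statement to identifying the indecomposable summands of the basic $p^r$-bounded projectives (resp.\ injectives), and both use Steinberg plus Clausen--James to show that the projective covers (resp.\ injective envelopes) of exactly the $p^r$-restricted simples occur there. The only minor deviation is that for the converse you replace the paper's appeal to its vanishing lemma \ref{lm-annul-prelim} by a direct weight-space computation (every weight of $L_{\mu^i}^{(i)}$ with $i\ge r$ is divisible by $p^i\ge p^r$), which is an equally valid elementary substitute, and you treat projectives first and dualize, whereas the paper treats injectives first.
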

\begin{proof}
We restrict ourselves to proving the second statement, the proof of the first one is similar. Let us denote by $J_\mu$ the injective envelope of $L_\mu$. We have to prove that (i) for all $p^r$ restricted partition $\mu$, there is a $p^r$-bounded tuple $\lambda$ such that $J_\mu$ is a direct summand of $S^\lambda$, and (ii) for all $p^r$-bounded tuple $\lambda$, the indecomposable direct summands of $S^\lambda$ are all isomorphic to $J_\mu$ with $\mu$ $p^r$-restricted. 

Write $\mu=\sum_{i=0}^n p^i\mu^i$ for $p$-restricted partitions $\mu^i$. By Steinberg tensor product theorem $L_\mu$ is isomorphic to $L_{\mu^0}^{(0)}\otimes\dots\otimes L_{\mu^n}^{(n)}$. By Clausen and James' theorem $L_\mu$ is a subfunctor of $(I^{(0)})^{\otimes |\mu^0|}\otimes \dots\otimes (I^{(n)})^{\otimes |\mu^n|}$. Since $I^{(i)}\subset S^{p^i}$, we obtain that $L_\mu$ is a subfunctor of $\bigotimes_{0\le i\le n}(S^{p^i})^{\otimes |\mu^i|}$.
If $\mu$ is $p^r$-restricted then $n<r$, so $L_\mu$ (hence also $J_\mu$) is a subfunctor of $S^\lambda$ with $\lambda$ $p^r$-bounded. This proves (i). 
Let $\lambda$ be a $p^r$-bounded tuple, and let $\mu$ be a partition such that $\mu$ is not $p^r$-restricted. By Steinberg tensor product theorem, $L_\mu$ is isomorphic to $L_{\mu'}\otimes L_{\mu''}^{(r)}$ for a $p^r$-restricted partition $\mu'$ and a nonzero partition $\mu''$. S by lemma \ref{lm-annul-prelim} $\hom_\PP(L_\mu,S^\lambda)$ is zero, hence $J_\mu$ is not a direct summand of $S^\lambda$. This proves (ii).
\end{proof}

\begin{corollary}\label{cor-bounded}
For all $F$, $i(F,r)>0$ (resp. $p(F,r)>0$) if and only if $\mathrm{Head}(F)$ (resp. $\Soc(F)$) is a direct sum of $p^r$-restricted simples.
\end{corollary}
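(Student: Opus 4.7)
The plan is to assemble three earlier ingredients. Step one: by Remark~i) following Definition~\ref{def-pi}, the hypothesis $p(F,r)>0$ (resp.\ $i(F,r)>0$) is equivalent to $F$ being left $p^r$-bounded (resp.\ right $p^r$-bounded) in the sense of Definition~\ref{def-bounded}. Step two: by Lemma~\ref{lm-elem}, this reduces further to the same property for the semisimple quotient $\mathrm{Head}(F)$ (resp.\ the semisimple subobject $\Soc(F)$). Step three: translate $p^r$-boundedness of a semisimple functor into a statement about the highest weights of its simple summands.

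For step three, I would argue that a semisimple $\bigoplus_\mu L_\mu^{\oplus n_\mu}$ is right $p^r$-bounded iff each $L_\mu$ with $n_\mu>0$ is, by lifting an embedding into $\bigoplus_\lambda S^\lambda$ to its indecomposable summands (which are injective envelopes $J_\nu$ of simples) and factoring through them; the dual reasoning handles the left/projective case. It then suffices to characterize which individual simples $L_\mu$ are left or right $p^r$-bounded, and this is exactly the content of Proposition~\ref{prop-eq}: on the injective side, its proof shows that $J_\mu$ is a summand of some basic $p^r$-bounded injective $S^\lambda$ iff $\mu$ is $p^r$-restricted, with no other simple envelope accessible this way.

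For the remaining (projective) half I would appeal to the $\sharp$-duality of section~\ref{subsec-simple}(\ref{item2}), which is a contravariant exact self-equivalence of $\PP_\kk$ satisfying $L_\mu^\sharp\simeq L_\mu$ and $(S^\lambda)^\sharp\simeq \Gamma^\lambda$. This exchanges basic $p^r$-bounded injectives with basic $p^r$-bounded projectives, $\Soc$ with $\mathrm{Head}$, and the invariants $i(-,r)$ and $p(-,r)$; applying the injective statement to $F^\sharp$ therefore yields the projective statement for $F$. Alternatively, one can argue directly using Steinberg and Clausen--James as in the proof of Proposition~\ref{prop-eq}, since the proof there is symmetric in $\Gamma^\lambda$ and $S^\lambda$.

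There is no serious obstacle: no new ingredients beyond Remark~i), Lemma~\ref{lm-elem}, and Proposition~\ref{prop-eq} are required. The only mild care is bookkeeping, ensuring the Head/Soc, left/right, and $p/i$ pairings line up consistently throughout the two dual arguments.
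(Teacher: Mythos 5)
Your argument is correct and is essentially the paper's own: the corollary is stated without a written proof precisely because it is the concatenation of remark~i) after definition~\ref{def-pi}, lemma~\ref{lm-elem}, and (the proof of) proposition~\ref{prop-eq}, which is exactly the chain you assemble; the reduction of boundedness of a semisimple functor to its simple summands, and the $\sharp$-duality (or the symmetric rerun of proposition~\ref{prop-eq}'s proof) relating the projective and injective halves, are both unproblematic. One point deserves emphasis: what your bookkeeping actually establishes is that $p(F,r)>0$ iff $\mathrm{Head}(F)$ is a direct sum of $p^r$-restricted simples and $i(F,r)>0$ iff $\Soc(F)$ is --- i.e.\ the two ``resp.''\ clauses paired the opposite way from the printed statement. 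Your pairing is the correct one: left $p^r$-boundedness (being a quotient of basic $p^r$-bounded projectives) is detected on the head and right $p^r$-boundedness on the socle, and for instance $F=S^p$ has $p$-restricted head $Q^p$ but socle containing $I^{(1)}$ and $i(S^p,1)=0$ by proposition~\ref{prop-ex}, so the statement as printed cannot be literally right. Your version is also the one actually used later in the paper, e.g.\ in the proof of proposition~\ref{prop-crithigh}, where ``$K$ is not left $p^r$-bounded'' is converted into a nonzero map from $K$ onto a non-restricted simple, a condition on $\mathrm{Head}(K)$. So there is no gap in your proof; you have silently corrected a transposition typo in the statement.
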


If $(d_0,\dots,d_k)$ is a tuple of nonnegative integers, we let 
$$T^{(d_0,\dots,d_k)}=(\otimes^{d_0})\otimes(\otimes^{d_1})^{(1)}\otimes\dots \otimes (\otimes^{d_k})^{(k)}$$
 
\begin{corollary}\label{cor-caract}
Let $L$ be a simple functor. The following are equivalent: (i) $p(L,r)>0$, (ii) $i(L,r)>0$, (iii) $L$ is $p^r$ restricted, (iv) there exists a tuple $(d_0,\dots d_{r-1})$ such that $L$ is a quotient of $T^{(d_0,\dots,d_{r-1})}$, and (v) there exists a tuple $(d_0,\dots d_{r-1})$ such that $L$ embeds into $T^{(d_0,\dots,d_{r-1})}$.
\end{corollary}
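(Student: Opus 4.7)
My plan uses (iii) as a central hub and establishes its equivalence with each of (i), (ii), (iv), and (v) in turn. The first two equivalences are virtually immediate from corollary \ref{cor-bounded}: since that result says $p(F,r)>0$ (resp. $i(F,r)>0$) iff $\Soc(F)$ (resp. $\Head(F)$) is a direct sum of $p^r$-restricted simples, and since for a simple functor $F=L$ both socle and head equal $L$ itself, this gives (i)$\Leftrightarrow$(iii)$\Leftrightarrow$(ii).

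For (iii)$\Rightarrow$(iv) I would follow the pattern already used in the proof of proposition \ref{prop-eq}. Writing $L=L_\mu$ with $\mu=\mu^0+p\mu^1+\dots+p^{r-1}\mu^{r-1}$ and each $\mu^i$ $p$-restricted, I apply Steinberg's tensor product theorem to rewrite $L_\mu\cong L_{\mu^0}\otimes L_{\mu^1}^{(1)}\otimes\dots\otimes L_{\mu^{r-1}}^{(r-1)}$, and invoke Clausen--James' theorem in its surjection form to produce surjections $\otimes^{|\mu^i|}\twoheadrightarrow L_{\mu^i}$. Tensoring these surjections together (using exactness of precomposition with each $I^{(i)}$) gives the desired surjection $T^{(|\mu^0|,\dots,|\mu^{r-1}|)}\twoheadrightarrow L$.

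The converse (iv)$\Rightarrow$(iii) is the main step and where I expect the core difficulty. The idea is to show that $T^{(d_0,\dots,d_{r-1})}$ is itself left $p^r$-bounded. The key input is the natural surjection $\Gamma^{p^i}\twoheadrightarrow I^{(i)}=L_{(p^i)}$, arising from the fact that $\Gamma^{p^i}$ is the projective cover of the simple $L_{(p^i)}$ (equivalently, $\hom_{\PP_\kk}(\Gamma^d,L_\mu)\cong L_\mu(\kk)$ is nonzero only when $\mu$ consists of a single row). Tensoring these surjections for $i=0,\dots,r-1$ yields a surjection $\Gamma^\lambda\twoheadrightarrow T^{(d_0,\dots,d_{r-1})}$, where $\lambda=(\underbrace{1,\dots,1}_{d_0},\underbrace{p,\dots,p}_{d_1},\dots,\underbrace{p^{r-1},\dots,p^{r-1}}_{d_{r-1}})$, all of whose entries are strictly less than $p^r$. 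Hence $T$ is left $p^r$-bounded, so by lemma \ref{lm-elem} so is $\Head(T)$, and proposition \ref{prop-eq} then forces every simple summand of $\Head(T)$---in particular every simple quotient $L$ of $T$---to be $p^r$-restricted.

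Finally, (iv)$\Leftrightarrow$(v) is a formal duality argument: since $(\otimes^d)^\sharp=\otimes^d$ and Frobenius twists commute with $\sharp$ we have $(T^{(d_0,\dots,d_{r-1})})^\sharp\cong T^{(d_0,\dots,d_{r-1})}$, and combined with the self-duality $L^\sharp\cong L$ of simple functors this turns any surjection $T\twoheadrightarrow L$ into an embedding $L\cong L^\sharp\hookrightarrow T^\sharp\cong T$, and vice versa, closing the cycle of equivalences.
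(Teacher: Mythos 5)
Your proof is correct, and for four of the five links it coincides with the paper's: $(i)\Leftrightarrow(ii)\Leftrightarrow(iii)$ via corollary \ref{cor-bounded} applied to a simple functor, $(iii)\Rightarrow(iv)$ via Steinberg and Clausen--James exactly as in the proof of proposition \ref{prop-eq}, and $(iv)\Leftrightarrow(v)$ via Kuhn self-duality of $L$ and of $T^{(d_0,\dots,d_{r-1})}$. Where you genuinely diverge is $(iv)\Rightarrow(iii)$. The paper argues by contraposition on the $\hom$-vanishing side: if $L$ is not $p^r$-restricted, Steinberg gives $L\simeq L_1\otimes L_2^{(r)}$ with $L_2$ nontrivial, and corollary \ref{thm-1bis} is invoked to conclude $\hom_{\PP_\kk}(L,T^{(d_0,\dots,d_{r-1})})=0$ (a vanishing that really traces back to lemma \ref{lm-annul-prelim} together with the embedding of $T^{(d_0,\dots,d_{r-1})}$ into a basic $p^r$-bounded injective $S^\lambda$). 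You instead work on the projective side: tensoring the surjections $\Gamma^{p^i}\twoheadrightarrow I^{(i)}$ exhibits $T^{(d_0,\dots,d_{r-1})}$ as a quotient of a basic $p^r$-bounded projective $\Gamma^\lambda$, so any simple quotient $L$ of it is left $p^r$-bounded, i.e.\ $p(L,r)>0$, and the already-established $(i)\Rightarrow(iii)$ finishes. Your route is slightly more economical and robust: it needs no second appeal to Steinberg's decomposition of $L$, and it avoids having to check the hypotheses of corollary \ref{thm-1bis} in a situation where its condition (C1) would on its face require knowing that $\Head(T^{(d_0,\dots,d_{r-1})})$ is $p^r$-restricted --- essentially the statement being proved. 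Your observation that $T^{(d_0,\dots,d_{r-1})}$ is a quotient of a $p^r$-bounded $\Gamma^\lambda$ is the precise projective-side dual of the fact the paper uses implicitly, and both arguments ultimately rest on proposition \ref{prop-eq}, hence on the classical Steinberg and Clausen--James theorems, so neither gains independence from those inputs.
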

\begin{proof}
(i)$\Leftrightarrow$ (ii) $\Leftrightarrow$ (iii) by corollary \ref{cor-bounded}, and (iv) $\Leftrightarrow$ (v) by Kuhn duality (both $L$ and $T^{(d_0,\dots,d_{r-1})}$ are self-dual). (iii) $\Rightarrow$ (iv) by Steinberg tensor product theorem and Clausen and James' theorem (as already used in the proof of proposition \ref{prop-eq}. It remains to prove that (iv)$\Rightarrow$(iii). Assume that $L$ is not $p^r$-restricted, then by Steinberg tensor product theorem $L\simeq L_1\otimes L_2^{(r)}$ where $L_1$ is $p^r$-restricted and $L_2$ is a nontrivial simple functor. Thus by proposition \ref{prop-eq} we may apply corollary \ref{thm-1bis} with `$F$'=$L_1$, `$G$'=$T^{(d_0,\dots,d_{r-1})}$, `$X$'=$L_2$ and `$Y$'=$\kk$ to obtain that there is no nonzero map $L\to T^{(d_0,\dots,d_{r-1})}$. Thus all simple subfunctors of $T^{(d_0,\dots,d_{r-1})}$ are $p^r$-restricted.
\end{proof}

Next corollary is a translation of theorem \ref{thm-1} in low degrees in terms of $p^r$-restricted weights. It will be used in sections \ref{sec-tensS} and \ref{sec-tensint}.

\begin{corollary}\label{thm-1bis}
Let $(F,G,X,Y)$ be a quadruple of homogeneous strict polynomial functors satisfying the K\"unneth condition \ref{KCond}, and let $r\ge 0$. Precomposing by $I^{(r)}$ and taking cup products induces injective morphisms 
\begin{align}
&\hom_{\PP_{\kk}}(F,G)\otimes \hom_{\PP_{\kk}}(X,Y)\hookrightarrow\hom_{\PP_{\kk}}(F\otimes X^{(r)},G\otimes Y^{(r)})\label{eq-inj1ter}\;,\\
&\begin{array}{c}\hom_{\PP_{\kk}}(F,G)\otimes \Ext^1_{\PP_{\kk}}(X,Y)\\
\oplus\,\Ext^1_{\PP_{\kk}}(F,G)\otimes \hom_{\PP_{\kk}}(X,Y)  
\end{array}
\hookrightarrow\Ext^1_{\PP_{\kk}}(F\otimes X^{(r)},G\otimes Y^{(r)})\;. \label{eq-inj2ter}
\end{align}
If one of the conditions C1 or C2 below holds, morphism  \eqref{eq-inj1ter} is an isomorphism. If both C1 and C2 hold, then morphism \eqref{eq-inj2ter} is also an isomorphism. 
\begin{enumerate}
\item[] (C1) \;$\deg F\le \deg G$ and $\mathrm{Head}(G)$ is a direct sum of $p^r$-restricted simples.
\item[] (C2) \;$\deg F\ge \deg G$ and $\mathrm{Soc}(F)$ is a direct sum of $p^r$-restricted simples.
\end{enumerate}
\end{corollary}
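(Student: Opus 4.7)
The plan is to derive Corollary \ref{thm-1bis} directly from Theorem \ref{thm-1}, by combining it with two easy ingredients: the low-degree Frobenius isomorphisms $\Ext^i_{\PP_\kk}(X,Y)\xrightarrow{\simeq}\Ext^i_{\PP_\kk}(X^{(r)},Y^{(r)})$ for $i=0,1$ recalled in \S\ref{subsec-frob-twist}, and the characterization of positivity for $p(-,r)$ and $i(-,r)$ given by Corollary \ref{cor-bounded}.

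First I would apply Theorem \ref{thm-1} to the quadruple $(F,G,X,Y)$ and extract the components in cohomological degrees $0$ and $1$. Using the Frobenius isomorphisms to identify $\hom_{\PP_\kk}(X^{(r)},Y^{(r)})$ with $\hom_{\PP_\kk}(X,Y)$ and $\Ext^1_{\PP_\kk}(X^{(r)},Y^{(r)})$ with $\Ext^1_{\PP_\kk}(X,Y)$, the degree-$0$ and degree-$1$ summands of the cup product in Theorem \ref{thm-1} become precisely the maps \eqref{eq-inj1ter} and \eqref{eq-inj2ter}. Since Theorem \ref{thm-1} asserts injectivity in all degrees, both injectivity statements follow at once.

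For the isomorphism claims, I would translate the hypotheses using Corollary \ref{cor-bounded}: condition C1 is equivalent to $\deg F\le \deg G$ together with $i(G,r)\ge 1$, and condition C2 is equivalent to $\deg F\ge \deg G$ together with $p(F,r)\ge 1$. Under C1, either $\deg F<\deg G$ and Theorem \ref{thm-1}(1) yields an isomorphism in degrees $<i(G,r)\ge 1$, or $\deg F=\deg G$ and Theorem \ref{thm-1}(3) yields an isomorphism in degrees $<p(F,r)+i(G,r)\ge 1$; in either situation \eqref{eq-inj1ter} is an isomorphism. The argument under C2 is symmetric. When both C1 and C2 hold, one has $\deg F=\deg G$ and $p(F,r)+i(G,r)\ge 2$, so Theorem \ref{thm-1}(3) promotes the isomorphism range to include degree $1$, giving the iso claim for \eqref{eq-inj2ter}.

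There is no genuine obstacle in this argument: once Theorem \ref{thm-1}, the low-degree Frobenius identification, and Corollary \ref{cor-bounded} are in place, the proof reduces to matching domains via the Frobenius isomorphism and a short bookkeeping of degrees. The only minor point to be careful about is to verify that under C1 or C2 alone one genuinely lands in one of the three exhaustive cases of Theorem \ref{thm-1}, which is automatic since the degree hypothesis in C1 (resp.\ C2) covers exactly cases (1) and (3) (resp.\ (2) and (3)).
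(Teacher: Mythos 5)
Your proposal is correct and follows essentially the same route as the paper: Theorem \ref{thm-1} restricted to degrees $0$ and $1$, the low-degree Frobenius isomorphism from \S\ref{subsec-frob-twist} to identify the source, and Corollary \ref{cor-bounded} to translate C1 and C2 into $i(G,r)>0$ and $p(F,r)>0$. The only difference is that you spell out the case analysis over the three degree situations of Theorem \ref{thm-1}, which the paper leaves implicit.
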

\begin{proof}
Recall from section \ref{subsec-frob-twist} that precomposing by $I^{(r)}$ yields a $\kk$-linear isomorphism on the level of $\hom$ and $\Ext^1$. Thus the statement of corollary \ref{thm-1bis} is equivalent to the statement where $X$ and $Y$ are replaced by $X^{(r)}$ and $Y^{(r)}$ at the source of the cup product maps. By corollary \ref{cor-bounded}, (C1), resp. (C2) means that $i(G,r)>0$, resp. $i(F,r)>0$. Thus corollary \ref{cor-bounded} follows directly from theorem \ref{thm-1}.
\end{proof}

\begin{remark}\label{rk-Jantzen}
In sections  \ref{sec-tensS} and \ref{sec-tensint} we will use corollary \ref{thm-1bis} for quite general functors $F$ and $G$. However, this result is already interesting in the very special case where $F$ and $G$ are $p^r$-restricted simples. Indeed the isomorphism given by corollary \ref{thm-1bis} is then a stronger form, albeit valid only for stable polynomial representations of $GL_n$, of formulas of Donkin \cite{DonkinExt} and Andersen \cite{AndersenExt}, see also \cite[II.10.16, II.10.17]{Jantzen}. For example, corollary \ref{thm-1bis} implies that if $\lambda\ne \lambda'$ are partitions of $d$ and $G=GL_n$ with $n$ big enough, then the number of $L_\mu$ in the socle of the tensor product $\Ext^1_{G_r}(L_\lambda,L_{\lambda'})^{(-r)}\otimes L_\mu'$ is zero, unless $\mu=\mu'$ in which case it equals the dimension of
 $\Ext^1_G(L_\lambda,L_{\lambda'})$.
\end{remark}

\section{Tensor products of Steinberg type}\label{sec-tensS}

Recall that a simple functor $L$ is a composition factor of an arbitrary functor $F$ if $L$ is a subquotient of $F$. Equivalently, if $0=F^{-1}\subset F^0\subset \dots \subset F$ is an exhaustive filtration of $F$ whose successive quotients are semisimple, then $L$ appears as a direct summand in one of these successive quotients.

\begin{definition}
A tensor product of Steinberg type is a strict polynomial functor isomorphic to a tensor product $F\otimes G^{(r)}$, where $r$ is a nonnegative integer and $F$ is a functor whose composition factors are all $p^r$-restricted.
\end{definition}
The purpose of the present section is to explore the structure of these tensor products of Steinberg type. Note that by Steinberg tensor product theorem (applied to the tensor product of the socle filtration of $F$ by the socle filtration of $G^{(r)}$), all composition factors of $F\otimes G^{(r)}$ are of the form $L_\lambda\otimes L_\mu^{(r)}$, with $
L_\lambda$ a composition factor of $F$ and $L_\mu$ a composition factor of $G$. This observation motivates the following definition.
\begin{definition}
Let $e$, $d$, $r$ be nonnegative integers. We let $\St(d,e,r)$ be the full subcategory of $\PP_{d+ep^r,\kk}$ supported by the functors whose composition factors are all of the form $L_\lambda\otimes L_\mu^{(r)}$ for $p^r$-restricted partition $\lambda$ of $d$ and partitions $\mu$ of $e$.
\end{definition}

\begin{lemma}\label{lm-loc}
The category $\St(d,e,r)$ contains all the tensor products of Steinberg type $F\otimes G^{(r)}$ where $F$ is homogeneous of degree $d$ and $G$ is homogeneous of degree $e$. Moreover, it is localizing and colocalising, i.e. it is closed under sums, products, subobjects, quotients and extensions.
\end{lemma}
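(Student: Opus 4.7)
My plan is to handle the two assertions of the lemma in turn. For the first, I will construct an exhaustive filtration of $F\otimes G^{(r)}$ whose semisimple layers consist entirely of simples of the prescribed form. Since $F$ lies in $\PP_{d,\kk}$, which is equivalent to modules over the finite dimensional Schur algebra $S(n,d)$ for any $n\ge d$, the radical filtration terminates and provides a finite ascending filtration $0=F_0\subset\cdots\subset F_k=F$ with semisimple successive quotients; by the hypothesis on $F$, each simple summand of $F_i/F_{i-1}$ is some $L_\lambda$ with $\lambda\vdash d$ and $\lambda$ $p^r$-restricted. Choosing an analogous filtration $0=G_0\subset\cdots\subset G_l=G$ of $G$ with simple summands $L_\mu$, $\mu\vdash e$, and using that both $-\otimes G^{(r)}$ and precomposition by $I^{(r)}$ are exact, I obtain a filtration of $F\otimes G^{(r)}$ with successive quotients $(F_i/F_{i-1})\otimes G^{(r)}$, each of which I refine via the filtration of $G^{(r)}$ to yield layers that are direct sums of $L_\lambda\otimes L_\mu^{(r)}$. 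By the Steinberg tensor product theorem recalled at the beginning of Section \ref{sec-eq-def}, each $L_\lambda\otimes L_\mu^{(r)}$ is the simple $L_{\lambda+p^r\mu}$; since $F\otimes G^{(r)}$ is visibly homogeneous of degree $d+ep^r$, it thus lies in $\St(d,e,r)$.

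For the closure properties, I will work in the equivalent category of modules over the finite dimensional Schur algebra $A:=S(n,d+ep^r)$ and view $\St(d,e,r)$ as the full subcategory of $\PP_{d+ep^r,\kk}$ whose objects have all their simple subquotients in the fixed set $\mathcal{L}=\{L_{\lambda+p^r\mu}:\lambda\vdash d\text{ is $p^r$-restricted and }\mu\vdash e\}$. Closure under subobjects, quotients, and extensions is then immediate from the definition of composition factor. Closure under arbitrary direct sums follows from finite dimensionality of $A$: any simple subquotient $L$ of $\bigoplus_\alpha H_\alpha$ is finite dimensional, hence finitely generated; lifting a generating set yields a finitely generated (hence finite dimensional) submodule of $\bigoplus_\alpha H_\alpha$ that surjects onto $L$ and is already contained in a finite subsum $\bigoplus_{\alpha\in A_0}H_\alpha$, so $L$ is a composition factor of some $H_\alpha$ and thus lies in $\mathcal{L}$.

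The step that requires slightly more care, and which I expect to be the main (indeed only) nontrivial point, is closure under arbitrary products. For this I use that $A$ has nilpotent Jacobson radical $J$, and that for a family $(H_\alpha)$ in $\St(d,e,r)$ one has $J^i\prod_\alpha H_\alpha=\prod_\alpha J^iH_\alpha$, so the layers of the radical filtration of $\prod_\alpha H_\alpha$ are the products $\prod_\alpha(J^iH_\alpha/J^{i+1}H_\alpha)$. Each such product is annihilated by $J$, hence is a module over the semisimple algebra $A/J$ and therefore semisimple. Any simple summand $L$ of it must project nontrivially to some factor $J^iH_\beta/J^{i+1}H_\beta$, because $\bigcap_\alpha\ker\pi_\alpha=0$; by simplicity of $L$ this projection is injective, exhibiting $L$ as a composition factor of the layer $J^iH_\beta/J^{i+1}H_\beta$ of $H_\beta\in\St(d,e,r)$, and hence as an element of $\mathcal{L}$. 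Combined with the homogeneity of $\prod_\alpha H_\alpha$ in degree $d+ep^r$, this completes the verification.
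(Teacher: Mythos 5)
Your proposal is correct. The first assertion (that $F\otimes G^{(r)}$ lies in $\St(d,e,r)$) is handled exactly as the paper intends: the paper's own justification is the remark preceding the definition of $\St(d,e,r)$, which tensors the socle filtration of $F$ with that of $G^{(r)}$ and invokes Steinberg's theorem, and your radical-filtration version is the same argument. The closure properties other than products are, as you say and as the paper says, immediate. The one step the paper singles out as non-obvious is closure under arbitrary products, and here your route genuinely differs: the paper picks a composition factor $L$ of $\prod X_\alpha$, lifts the surjection $P_L\twoheadrightarrow L$ to a nonzero map $P_L\to\prod X_\alpha$, and uses that $\hom(P_L,-)$ commutes with products to find an index $\alpha$ with $\hom(P_L,X_\alpha)\ne 0$, which forces $L$ to be a composition factor of $X_\alpha$. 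You instead use that the Jacobson radical of the finite dimensional Schur algebra is nilpotent and that $J^i$ commutes with products (valid because $J^i$ is finitely generated as an ideal), so the radical filtration of the product has semisimple layers $\prod_\alpha(J^iH_\alpha/J^{i+1}H_\alpha)$, from which a simple summand injects into some factor. Both arguments are sound; the paper's is shorter and avoids any discussion of whether the radical filtration commutes with products, while yours is more self-contained in that it produces an explicit exhaustive semisimple filtration matching the paper's stated characterization of composition factors. The only cosmetic slip is calling the radical filtration ``ascending''; it is descending, but since $J$ is nilpotent it is finite and can be reindexed, so nothing is affected.
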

\begin{proof}
Everything is straightforward from the definition of $\St(d,e,r)$ except maybe that $\St(d,e,r)$ is closed under arbitrary products. Let  $L$ be a composition factor of a product $\prod X_\alpha$. Then there is a nonzero map $P_L\to \prod X_\alpha$, where $P_L$ denotes the projective cover of $L$. Thus there is an $\alpha$ such that $\hom_{\PP_\kk}(P_L,X_\alpha)\ne 0$, so that $L=L_\lambda\otimes L_\mu^{(r)}$ with $\lambda$ $p^r$-restricted.  
\end{proof}

Next lemma makes a critical use of corollary \ref{thm-1bis}.

\begin{lemma}\label{lm-pres-St}
In the category $\St(d,e,r)$, any object $X$ has a presentation $P_1\to P_0\to X\to 0$ in which the $P_i$ are direct sums of tensor products of Steinberg type $F\otimes G^{(r)}$ with $F$ and $G$ finite. Similarly, $X$ has a copresentation $0\to X\to Q^0\to Q^1$ in which the $Q^i$ are products of such tensor products.
\end{lemma}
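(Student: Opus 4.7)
The plan is to build, for each simple $L_\lambda\otimes L_\mu^{(r)}$ of $\St(d,e,r)$, a finite tensor product of Steinberg type that plays the role of its projective cover inside $\St(d,e,r)$. First I introduce the Serre subcategory $\PP_{d,\kk}^{\le r}\subset\PP_{d,\kk}$ consisting of the functors whose composition factors are all $p^r$-restricted. Since $\PP_{d,\kk}\simeq S(n,d)\text{-Mod}$ for $n\ge d$ is a module category over a finite-dimensional algebra, $\PP_{d,\kk}^{\le r}$ has enough finite projectives: the projective cover of $L_\lambda$ in $\PP_{d,\kk}^{\le r}$ can be realised as the maximal quotient of the ambient projective cover $P_\lambda\in\PP_{d,\kk}$ that still lies in $\PP_{d,\kk}^{\le r}$. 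Call this cover $Q_\lambda$, and let $P_\mu$ denote the projective cover of $L_\mu$ in $\PP_{e,\kk}$; both are finite, and by the very definition of $\PP_{d,\kk}^{\le r}$ the tensor product $Q_\lambda\otimes P_\mu^{(r)}$ is a Steinberg-type tensor product with finite factors.

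The key step is to apply corollary \ref{thm-1bis} to the quadruple $(F,G,X,Y)=(Q_\lambda,L_{\lambda'},P_\mu,L_{\mu'})$ for any $p^r$-restricted partition $\lambda'$ of $d$ and any partition $\mu'$ of $e$. The K\"unneth condition holds since $Q_\lambda$ and $P_\mu$ are finite, and both (C1) and (C2) are satisfied because all four degrees match and both $\Soc(Q_\lambda)$ and $\Head(L_{\lambda'})$ are direct sums of $p^r$-restricted simples. This yields
\[\hom_{\PP_\kk}(Q_\lambda\otimes P_\mu^{(r)},L_{\lambda'}\otimes L_{\mu'}^{(r)})\simeq\hom(Q_\lambda,L_{\lambda'})\otimes\hom(P_\mu,L_{\mu'}),\]
which is nonzero exactly when $(\lambda,\mu)=(\lambda',\mu')$, and the analogous decomposition of $\Ext^1$ vanishes term by term: $\Ext^1(P_\mu,L_{\mu'})=0$ because $P_\mu$ is projective in $\PP_{e,\kk}$, and $\Ext^1(Q_\lambda,L_{\lambda'})=0$ because $Q_\lambda$ is projective in $\PP_{d,\kk}^{\le r}$, whose $\Ext^1$ agrees with that of the ambient $\PP_{d,\kk}$ by closure of $\PP_{d,\kk}^{\le r}$ under extensions. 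Hence $Q_\lambda\otimes P_\mu^{(r)}$ has simple head $L_\lambda\otimes L_\mu^{(r)}$ and no $\Ext^1$ against any simple of $\St(d,e,r)$.

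A d\'evissage via the long exact sequence extends this vanishing to every finite object of $\St(d,e,r)$, and then to arbitrary $X\in\St(d,e,r)$ by writing $X=\varinjlim X_\alpha$ as the filtered colimit of its finite subfunctors and using the compactness of $Q_\lambda\otimes P_\mu^{(r)}$. Granted this projectivity inside $\St(d,e,r)$, one constructs the presentation by first handling a finite $X$: choose a surjection $P_0=\bigoplus_i Q_{\lambda_i}\otimes P_{\mu_i}^{(r)}\twoheadrightarrow\Head(X)$ with one summand per composition factor of $\Head(X)$, lift it to $P_0\to X$ by projectivity, and invoke Nakayama to conclude surjectivity. For a general $X$, write $X=\bigcup X_\alpha$ as a union of finite subfunctors in $\St(d,e,r)$ and take $P_0$ to be the direct sum of the presentations of the $X_\alpha$. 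The kernel is again in $\St(d,e,r)$ by closure under subobjects, and one more iteration yields $P_1$. The copresentation is obtained dually, replacing $Q_\lambda$ and $P_\mu$ by the injective envelopes of $L_\lambda$ in $\PP_{d,\kk}^{\le r}$ and of $L_\mu$ in $\PP_{e,\kk}$, and using arbitrary products in place of direct sums to accommodate possibly infinite socles (injectivity being stable under products, no compactness argument is then needed).

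The conceptual heart of the proof is the use of corollary \ref{thm-1bis} to promote $Q_\lambda\otimes P_\mu^{(r)}$---which is not projective in $\PP_\kk$---into a projective object inside the Serre subcategory $\St(d,e,r)$. The main technical obstacle I anticipate is the careful verification that $\PP_{d,\kk}^{\le r}$ has the expected homological properties (enough finite projectives and $\Ext^1$ computed in the ambient category), together with the colimit manipulations needed to pass from finite to arbitrary objects of $\St(d,e,r)$.
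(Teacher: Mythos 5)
Your proposal is correct and is essentially the paper's argument: both proofs hinge on corollary \ref{thm-1bis} applied to the maximal $p^r$-restricted quotient $Q_\lambda=P'_\lambda$ of the projective cover $P_\lambda$, tensored with $P_\mu^{(r)}$, and both pass from simples to general $X$ via the socle filtration and finite subfunctors. The only difference is packaging: you first establish that $Q_\lambda\otimes P_\mu^{(r)}$ is a projective object of $\St(d,e,r)$ with simple head and then run the standard projective-cover argument, whereas the paper lifts the maps $P_\lambda\otimes P_\mu^{(r)}\to X^i/X^{i-1}$ step by step along the socle filtration and factors them through $P'_\lambda\otimes P_\mu^{(r)}$ on the fly.
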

\begin{proof}
It suffices to prove that all the objects of $\St(d,e,r)$ are quotients of direct sums of tensor products of Steinberg type with values in finite dimensional spaces (then using the duality $^\sharp$, they will also embed into products of such functors). Let $X$ be an object of $\St(d,e,r)$, and let $X^i$ denote the $i$-th term of the socle filtration of $X$. Assume that $X^{i-1}$ is quotient of $P^{i-1}$, where $P^{i-1}$ has the required form. Then $X^i/X^{i-1}$ is a direct sum of $L_\lambda\otimes L_\mu^{(r)}$, and each of these functors is a homomorphic quotient of $P_\lambda\otimes P_\mu^{(r)}$ where $P_\mu$ and $P_\lambda$ are projective functors, and $P_\lambda$ is left $p^r$-bounded. Using corollary \ref{thm-1bis} and the projectivity of $P_\mu$ and $P_\lambda$, we obtain $\Ext^1_{\PP_\kk}(P_\lambda\otimes P_\mu^{(r)}, X^{i-1})=0$. Hence the map $P_\lambda\otimes P_\mu^{(r)}\to X^i/X^{i-1}$ lifts to a map $f: P_\lambda\otimes P_\mu^{(r)}\to X^i$. The functor $P_\lambda$ has a unique largest quotient 
$P'
_\lambda$ whose composition factor are $p^r$-restricted. Let $K_\lambda$ be the kernel of the quotient map $P_\lambda\twoheadrightarrow P'_\lambda$. By corollary \ref{thm-1bis}, $\hom_{\PP_\kk}(K_\lambda\otimes P_{\mu}^{(r)},X_{i})=0$. Thus $f$ induces a map $P_\lambda'\otimes P_\mu^{(r)}\to X^i$. In particular, if we define $P^{i}:=P^{i-1}\oplus \bigoplus P_\lambda'\otimes P_\mu^{(r)}$, then $P^i$ is a direct sum of tensor products of Steinberg type with values in finite dimensional vector spaces, and $X^i$ is a quotient of $P^i$. Since homogeneous strict polynomial functors have finite socle filtrations, this proves the lemma. 
\end{proof}

We will prove that the categories $\St(d,e,r)$ have an alternative description in terms of bifunctors. To be more specific, we denote by 
$$\Phi:\PP_{d,e,\kk}(2)\to \PP_{d+p^re,\kk}$$
the functor such that $\Phi(B)(V)= B(V,V^{(r)})$. We observe that $\Phi$ is exact, but it is not an equivalence of categories. For example if $d=p^r$ and $e=1$ the bifunctor $I^{(r)}\boxtimes I$ is simple, while its image $\otimes^{2(r)}$ is not ($\Lambda^{2(r)}$ is a proper subfunctor).  However $\Phi$ behaves better if we suitably restrict its source and target categories.
\begin{definition}
Let $e$, $d$, $r$ be nonnegative integers. We denote by $\StBif$ the full subcategory of $\PP_{d,e,\kk}(2)$ supported by the bifunctors whose composition factors are all of the form $L_\lambda\boxtimes L_\mu$ where $\lambda$ is a $p^r$-restricted partition of weight $d$ and $\mu$ is a partition of weight $e$
\end{definition}

We have the following analogues of lemma \ref{lm-loc} and lemma \ref{lm-pres-St}.
\begin{lemma}\label{lm-loc2}
The subcategory $\StBif$ contains all the separable functors $F\boxtimes G$ where $F$ is homogeneous of degree $d$ with $p^r$ restricted composition factors, and $G$ is homogeneous of degree $e$. Moreover, $\StBif$ is closed under sums, products, subobjects, quotients and extensions.
\end{lemma}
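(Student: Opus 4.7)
The plan is to mirror the one-variable proof of lemma \ref{lm-loc} almost verbatim, leveraging the fact that the category $\PP_{d,e,\kk}(2)$ enjoys the same basic structural properties as $\PP_\kk$ (finite composition length for finite bifunctors, existence of projective covers of simples, nonzero head and socle, etc.), as recalled in section \ref{subsec-cross}. The one ingredient that is genuinely about the bifunctor category is the identification of its simple objects with the external tensor products $L_\lambda\boxtimes L_\mu$ of the form prescribed in the definition of $\StBif$; this is immediate from the Schur algebra description $\PP_{d,e,\kk}\simeq S(n,d)\otimes S(m,e)\text{-Mod}$ (or from the appendix on tensor products of finite dimensional algebras).

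For the separable containment I would take a composition series of $F$ with $p^r$-restricted composition factors $L_{\lambda_i}$ of weight $d$ and one of $G$ with composition factors $L_{\mu_j}$ of weight $e$, then box-tensor the two filtrations to obtain a filtration of $F\boxtimes G$ whose successive quotients are precisely of the form $L_{\lambda_i}\boxtimes L_{\mu_j}$, which is exactly the shape asked by the definition of $\StBif$. Closure under direct sums, subobjects, quotients, and extensions is then purely formal, as the defining condition bears only on the set of composition factors and this set is hereditary under each of these operations.

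The only step requiring an actual argument, and the one I expect to be the main obstacle, is closure under arbitrary products. Here I would copy the argument of lemma \ref{lm-loc}: given a composition factor $L=L_\lambda\boxtimes L_\mu$ of $\prod_\alpha X_\alpha$, its projective cover $P_L$ in $\PP_{d,e,\kk}(2)$ satisfies $\hom_{\PP_\kk(2)}(P_L,\prod_\alpha X_\alpha)\neq 0$; composing with a projection yields $\hom_{\PP_\kk(2)}(P_L,X_\alpha)\neq 0$ for some $\alpha$, and hence $L$ is a composition factor of that single $X_\alpha\in\StBif$. This forces $\lambda$ to be $p^r$-restricted of weight $d$ and $\mu$ to be of weight $e$, completing the proof.
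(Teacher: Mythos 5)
Your proposal is correct and follows essentially the same route as the paper, which states lemma \ref{lm-loc2} as a direct analogue of lemma \ref{lm-loc}: everything is formal from the definition except closure under products, which is handled by the projective-cover argument you give, and the identification of the simple bifunctors as $L_\lambda\boxtimes L_\mu$ (proposition \ref{prop-simple}) is indeed the only genuinely bivariate input. One cosmetic point: since $F$ and $G$ may have infinite-dimensional values, "composition series" should be read as the (finite) socle filtration, exactly as the paper does when discussing composition factors of $F\otimes G^{(r)}$.
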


\begin{lemma}\label{lm-pres-St2}
In the category $\St'(d,e,r)$, any object $B$ has a presentation $P_1\to P_0\to X\to 0$ in which the $P_i$ are direct sums of tensor products of separable type $F\boxtimes G$ where $F$ and $G$ are finite and the composition factors of $F$ are $p^r$-restricted. Similarly, $B$ has a copresentation $0\to X\to Q^0\to Q^1$ in which the $Q^i$ are products of such tensor products.
\end{lemma}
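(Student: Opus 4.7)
The plan is to mimic the proof of Lemma \ref{lm-pres-St} with one key simplification and one modest Künneth-theoretic refinement. As in the one-variable case, it suffices to show that every object $B \in \StBif$ is a quotient of a direct sum of the desired separable form: the presentation then follows by applying this to $\ker(P_0 \to B)$, which lies in $\StBif$ by Lemma \ref{lm-loc2}, and the copresentation follows by dualizing via $^\sharp$, which preserves both $\StBif$ and the class of separable products $F \boxtimes G$ with $F,G$ finite and $F$ having $p^r$-restricted composition factors (since simple functors are self-dual and $(F \boxtimes G)^\sharp = F^\sharp \boxtimes G^\sharp$).

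For the quotient claim, I would induct along the (finite) socle filtration $\{B^i\}$ of $B$, producing at each step a surjection $P^i \twoheadrightarrow B^i$ whose source is a direct sum of separable tensor products of the required form. The successive quotient $B^i/B^{i-1}$ is a direct sum of simples $L_\lambda \boxtimes L_\mu$ with $\lambda$ $p^r$-restricted, and each summand is a quotient of $P_\lambda \boxtimes P_\mu$, where $P_\lambda, P_\mu$ are the finite projective covers in the one-variable categories. The key simplification over Lemma \ref{lm-pres-St} is that $P_\lambda \boxtimes P_\mu$ is automatically projective in $\PP_\kk(2)$, so the lift $P_\lambda \boxtimes P_\mu \to B^i$ of the surjection onto $L_\lambda \boxtimes L_\mu \subseteq B^i/B^{i-1}$ exists without any $\Ext^1$-vanishing argument, replacing the delicate use of Corollary \ref{thm-1bis} from the one-variable proof.

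To ensure the first factor has $p^r$-restricted composition factors, I would replace $P_\lambda$ by its largest quotient $P'_\lambda$ with that property (which exists and is finite since $P_\lambda$ itself is finite), and verify that the lifted map factors through $P'_\lambda \boxtimes P_\mu$, i.e.\ that $\hom_{\PP_\kk(2)}(K_\lambda \boxtimes P_\mu, B^i) = 0$ where $K_\lambda = \ker(P_\lambda \twoheadrightarrow P'_\lambda)$. Dévissage through the composition series of $B^i$ reduces this to the simple-target Hom vanishing $\hom_{\PP_\kk(2)}(K_\lambda \boxtimes P_\mu, L_{\lambda'} \boxtimes L_{\mu'}) = 0$ for $\lambda'$ $p^r$-restricted. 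Since $K_\lambda$ and $P_\mu$ are both finite, the Künneth condition \ref{KCond} holds and this Hom equals $\hom(K_\lambda, L_{\lambda'}) \otimes \hom(P_\mu, L_{\mu'})$; the first factor vanishes because, by construction of $P'_\lambda$, $K_\lambda$ has no $p^r$-restricted simple quotient. The main technical point is this Künneth-based Hom-vanishing; everything else is strictly lighter than in the one-variable argument.
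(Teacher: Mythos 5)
Your proof is correct and is essentially the argument the paper intends: the lemma is stated as the bifunctor analogue of Lemma \ref{lm-pres-St}, and you reproduce that induction along the socle filtration, correctly noting the two points where the bifunctor setting differs (the lift exists automatically because $P_\lambda\boxtimes P_\mu$ is projective in $\PP_{d,e,\kk}$, and the factorization through $P'_\lambda\boxtimes P_\mu$ follows from the K\"unneth isomorphism for finite functors rather than from Corollary \ref{thm-1bis}). No gaps.
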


We can now state the central theorem of this section.

\begin{theorem}\label{thm-eq}
The functor $\Phi$ restricts to an equivalence of categories:
$$\Phi:\StBif\xrightarrow[]{\simeq} \St(d,e,r)\;.$$
\end{theorem}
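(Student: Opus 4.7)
The plan is first to verify that $\Phi$ restricts to a functor $\StBif \to \St(d,e,r)$, then to prove full faithfulness by reducing to separable bifunctors (where both the K\"unneth formula and corollary \ref{thm-1bis} apply), and finally to derive essential surjectivity by lifting presentations. The first point is immediate: because $\Phi$ is exact, it sends a composition series for $B \in \StBif$ to a filtration whose successive quotients are the images of the composition factors; and on a simple factor $L_\lambda \boxtimes L_\mu$ with $\lambda$ a $p^r$-restricted partition of $d$ and $\mu$ a partition of $e$, the computation $\Phi(L_\lambda \boxtimes L_\mu)(V) = L_\lambda(V) \otimes L_\mu(V^{(r)})$ together with Steinberg tensor product theorem yields the simple functor $L_{\lambda + p^r \mu}$, which is of the required form.

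For full faithfulness, I would first treat the case of finite separable bifunctors. Let $F, F'$ be finite functors of degree $d$ with $p^r$-restricted composition factors, and let $G, G'$ be arbitrary finite functors of degree $e$. On the bifunctor side, the K\"unneth condition \ref{KCond} is satisfied since $F, F'$ are finite, so the K\"unneth morphism yields $\hom_{\PP_\kk(2)}(F \boxtimes G, F' \boxtimes G') \simeq \hom(F, F') \otimes \hom(G, G')$. On the functor side, both $F$ and $F'$ have $p^r$-restricted head and socle and $\deg F = \deg F'$, so conditions (C1) and (C2) of corollary \ref{thm-1bis} both hold, giving an isomorphism $\hom(F, F') \otimes \hom(G, G') \simeq \hom_{\PP_\kk}(F \otimes G^{(r)}, F' \otimes G'^{(r)})$. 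From the defining formula $\Phi(f \boxtimes g) = f \otimes g^{(r)}$ one checks directly that the composite of these two isomorphisms coincides with the map induced by $\Phi$.

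To bootstrap to arbitrary $B, B' \in \StBif$, I would use a presentation $\tilde P_1 \to \tilde P_0 \to B \to 0$ given by lemma \ref{lm-pres-St2}, and dually a copresentation $0 \to B' \to \tilde Q^0 \to \tilde Q^1$, both built from separable bifunctors of the form just treated. Applying $\hom(\tilde P_i, -)$ to the copresentation of $B'$ and comparing with the sequence obtained by applying $\hom(\Phi \tilde P_i, -)$ to the copresentation of $\Phi(B')$ (which remains exact since $\Phi$ is exact), the separable case identifies the two rightmost terms and the five lemma yields $\hom(\tilde P_i, B') \simeq \hom(\Phi \tilde P_i, \Phi B')$. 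A second application of the five lemma, to the left exact sequences obtained from the presentation of $B$, then gives the desired isomorphism $\hom(B, B') \simeq \hom(\Phi B, \Phi B')$.

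Finally, for essential surjectivity, I would take $X \in \St(d,e,r)$ and, by lemma \ref{lm-pres-St}, a presentation $P_1 \to P_0 \to X \to 0$ whose terms are direct sums of tensor products of Steinberg type $F \otimes G^{(r)}$ with $F, G$ finite and $F$ having $p^r$-restricted composition factors. Replacing each summand $F \otimes G^{(r)}$ by $F \boxtimes G$ produces bifunctors $\tilde P_i \in \StBif$ with $\Phi(\tilde P_i) = P_i$; by the full faithfulness on separables, the differential $P_1 \to P_0$ lifts uniquely to a morphism $\tilde P_1 \to \tilde P_0$, whose cokernel $B$ lies in $\StBif$ by closure under quotients (lemma \ref{lm-loc2}) and satisfies $\Phi(B) \simeq X$ by exactness of $\Phi$. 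The main obstacle I anticipate is the careful identification that the two independent isomorphisms of the separable case really do compose to the morphism induced by $\Phi$; once this compatibility is pinned down, the rest of the argument is standard homological bookkeeping.
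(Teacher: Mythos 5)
Your proof is correct, and the full faithfulness part follows the paper's route exactly: reduce via the (co)presentations of lemma \ref{lm-pres-St2} and exactness of $\Phi$ to finite separable bifunctors, where the K\"unneth isomorphism on one side and corollary \ref{thm-1bis} on the other do the work. Where you genuinely diverge is essential surjectivity. The paper constructs an explicit candidate quasi-inverse $\Psi$, defined by $(\Psi F)(V,W)=\hom_{\PP_{d+p^re,\kk}}(\Gamma^{d,V}\otimes(\Gamma^{e,W})^{(r)},F)$, checks via corollary \ref{thm-1bis} and the Yoneda-type isomorphism of Friedlander--Suslin that $\Phi\circ\Psi$ is the identity on tensor products of Steinberg type, and then propagates this to all of $\St(d,e,r)$ using the \emph{co}presentations of lemma \ref{lm-pres-St} together with \emph{left} exactness of $\Phi\circ\Psi$. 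You instead run the standard presentation-lifting argument: lift the terms of a presentation of $X$ termwise to $\StBif$, lift the differential through the already-established full faithfulness (using that $\Phi$ commutes with direct sums and that the summands are finitely generated, so hom out of them commutes with direct sums), and take the cokernel, which lands in $\StBif$ by lemma \ref{lm-loc2} and maps to $X$ by right exactness of $\Phi$. Your argument is arguably more economical and entirely formal once full faithfulness is in hand; the paper's approach costs an extra computation but buys an explicit formula for the inverse equivalence $\Psi$, which it records parenthetically. The compatibility check you flag at the end (that the K\"unneth and cup-product isomorphisms compose to the map induced by $\Phi$) is indeed the one point requiring care, and it is the same verification the paper silently performs.
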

\begin{proof}
We first prove that $\Phi$ is fully faithful. Let $\mathcal{T}$ be the full subcategory of $\StBif$ supported by the bifunctors of separable type $F\boxtimes G$ with $F$ and $G$ finite.  
By lemma \ref{lm-pres-St2} and exactness of $\Phi$, it suffices to prove that the restriction of $\Phi$ to $\mathcal{T}$ is fully faithful. This follows from corollary \ref{thm-1bis}. 
To prove that 
$\Phi$ is essentially surjective, we consider the functor 
$\Psi: \PP_{d+ep^r,\kk}\to \PP_{d,e,\kk}(2)$
which sends a functor $F$ to the bifunctor 
$$(\Psi F)(V,W)=\hom_{\PP_{d+p^re,\kk}}(\Gamma^{d,V}\otimes (\Gamma^{e,W})^{(r)},F)\;.$$
If $F\otimes G^{(r)}$ is a tensor product of Steinberg type, by corollary \ref{cor-bounded} $F$ is right $p^r$-bounded, so that corollary \ref{thm-1bis} and \cite[Thm 2.10]{FS} yield isomorphisms of strict polynomial functors:
\begin{align*}(\Phi\Psi (F\otimes G^{(r)}))(V)&\simeq \hom_{\PP_{d,\kk}}(\Gamma^{d,V},F)\otimes \hom_{\PP_{e,\kk}}(\Gamma^{e,V^{(r)}},G)\\&\simeq F(V)\otimes G(V^{(r)})\;.\end{align*}
Thus $\Phi\circ\Psi$ is the identity functor on the tensor products of Steinberg type. By lemma \ref{lm-pres-St}, all the functors of $\St(d,e,r)$ are kernels of products of tensor products of Steinberg type. Thus by left exactness of $\Phi\circ\Psi$, the restriction of $\Phi\circ\Psi$ to the whole category $\St(d,e,r)$ is isomorphic to the identity functor. Hence $\Phi$ is essentially surjective (and $\Psi$ is the inverse of $\Phi$). 
\end{proof}

Theorem \ref{thm-eq} generalizes Steinberg tensor product theorem. Indeed, external tensor products $L_\lambda\boxtimes L_\mu$ of simple functors are simple bifunctors, so that theorem \ref{thm-eq} and the stability of $\St(d,e,r)$ by subobjects imply that the functors $\Phi(L_\lambda\boxtimes L_\mu)=L_\lambda\otimes L_\mu^{(r)}$ is simple. More generally, theorem \ref{thm-eq} can be used to convert any question about the structure of the tensor products of Steinberg type (socle length, submodule lattices, or even $\Ext^1$  issuesù) into similar questions about the structure of bifunctors of separable type which are much easier to study. To illustrate this, we give new properties of tensor products of Steinberg type, obtained by translating some general properties of representations of tensor products of finite dimensional algebras given in section \ref{sec-reptens} (recall that the category $\PP_{d,e,\kk}(2)$ is equivalent to the 
category of $S(d,d)\otimes S(e,e)
$-modules). 

\begin{remark}
In the following propositions, we do not assume that $F$ and $G$ are homogeneous. In each case, the proof reduces easily to the homogeneous case by additivity of tensor products. We also observe that each of these propositions is a stronger statement than the classical Steinberg tensor product theorem.
\end{remark}

\begin{corollary}[Socle series]
If the composition factors of $F$ are $p^r$-restricted, then for all $G$, the socle filtration of $F\otimes G^{(r)}$ is the tensor product of the socle filtration of $F$ by the socle filtration of $G$, precomposed by $I^{(r)}$.
\end{corollary}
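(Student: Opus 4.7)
The plan is to transport the problem through the equivalence $\Phi: \StBif \xrightarrow{\simeq} \St(d,e,r)$ of Theorem \ref{thm-eq}, and reduce it to the analogous, much easier statement for external tensor products $F \boxtimes G$ of bifunctors. By additivity of tensor products and Frobenius twists we may reduce to the case where $F$ is homogeneous of some degree $d$ and $G$ is homogeneous of some degree $e$. Then $F \otimes G^{(r)}$ lies in $\St(d,e,r)$ (by Lemma \ref{lm-loc}) and $F \boxtimes G$ lies in $\StBif$ (by Lemma \ref{lm-loc2}), and the computation recalled in the proof of Theorem \ref{thm-eq} shows that $\Phi(F \boxtimes G) = F \otimes G^{(r)}$.

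Next I would check that the socle filtration is preserved under $\Phi$. Both $\StBif$ and $\St(d,e,r)$ are closed under subobjects in their ambient categories (Lemmas \ref{lm-loc} and \ref{lm-loc2}), so the socle of a bifunctor in $\StBif$ computed inside $\StBif$ coincides with its socle in $\PP_{d,e,\kk}(2)$, and likewise for $\St(d,e,r)$. Since $\Phi$ is an equivalence of abelian categories, it preserves socles and hence preserves entire socle filtrations. Thus the socle filtration of $F \otimes G^{(r)}$ is the image under $\Phi$ of the socle filtration of $F \boxtimes G$.

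It remains to identify the socle filtration of $F \boxtimes G$ with the tensor product of the socle filtrations of $F$ and $G$. Since $\PP_{d,\kk} \otimes \PP_{e,\kk}$ is equivalent (via evaluation on sufficiently large vector spaces) to modules over the tensor product algebra $S(n,d) \otimes S(m,e)$, and since simple $\PP_\kk$-functors have endomorphism ring $\kk$ (property (\ref{item3}) of section \ref{subsec-simple}), the general result on socle filtrations of tensor products of modules over tensor products of algebras recalled in section \ref{sec-reptens} applies. It gives
\[
\Soc^k(F \boxtimes G) = \sum_{i+j=k} F^i \boxtimes G^j,
\]
where $F^\bullet$ and $G^\bullet$ denote the socle filtrations of $F$ and $G$ respectively. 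Applying $\Phi$ to each term yields $F^i \otimes (G^j)^{(r)}$, and the conclusion follows.

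The main obstacle is not the functor-theoretic part, which is essentially formal once Theorem \ref{thm-eq} is in hand, but rather the bookkeeping of identifying the general tensor-product-of-algebras statement with what is needed here, and verifying that the $p^r$-restrictedness hypothesis on the composition factors of $F$ is precisely what licenses the use of Theorem \ref{thm-eq}. Once those matchings are made, the corollary is immediate.
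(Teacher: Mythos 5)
Your proposal is correct and follows exactly the route the paper intends: reduce to the homogeneous case, transport the question through the equivalence $\Phi$ of Theorem \ref{thm-eq} (noting that socles are computed the same way in the localizing subcategories as in the ambient categories), and invoke Proposition \ref{prop-socle} of the appendix for the socle filtration of an external tensor product over a tensor product of Schur algebras. The only cosmetic remark is that $\Phi(F\boxtimes G)=F\otimes G^{(r)}$ is immediate from the definition of $\Phi$ and does not require the $\Phi\Psi$ computation from the proof of Theorem \ref{thm-eq}.
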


\begin{corollary}[Subfunctors]\label{prop-subfun}
Assume that the composition factors of $F$ are $p^r$-restricted. Let $G$ be any functor. Assume that $F$ or $G$ is multiplicity free. Then the subfunctors $S\subset F\otimes G^{(r)}$ are of the following form, for some families of subfunctors $F_\alpha\subset F$ and $G_\alpha\subset G$:
$$ S=\sum_{\alpha} F_\alpha\otimes G_\alpha^{(r)}  \;.$$
\end{corollary}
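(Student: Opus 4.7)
The plan is to transport the problem through the equivalence $\Phi: \StBif \xrightarrow{\sim} \St(d,e,r)$ of Theorem \ref{thm-eq}, reducing it to an analogous statement about subbifunctors of $F\boxtimes G$; this latter statement is a general fact about submodules of a tensor product of modules over a tensor product of finite-dimensional algebras, which is the content of section \ref{sec-reptens}.

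Following the remark preceding the corollary, I first reduce to the case where $F$ and $G$ are homogeneous of degrees $d$ and $e$ respectively. (For the reduction, writing $F = \bigoplus F_d$ and $G = \bigoplus G_e$, corollary \ref{thm-1bis} applied to the pairs $(F_{d_1}, F_{d_2})$ and $(G_{e_1}, G_{e_2})$ — with C1 and C2 supplied by the $p^r$-restrictedness of the composition factors of $F$ — forces vanishing of $\hom$ and $\Ext^1$ between distinct bi-homogeneous summands $F_{d_1}\otimes G_{e_1}^{(r)}$ and $F_{d_2}\otimes G_{e_2}^{(r)}$, so any subfunctor splits accordingly.) In the homogeneous case $F \otimes G^{(r)} = \Phi(F \boxtimes G)$ lies in $\St(d,e,r)$, and Lemma \ref{lm-loc} shows that every subfunctor $S \subset F \otimes G^{(r)}$ does too. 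Theorem \ref{thm-eq} then provides, via $\Psi = \Phi^{-1}$, a subbifunctor $T \subset F \boxtimes G$ with $S = \Phi(T)$; by Lemma \ref{lm-loc2}, $T$ lies in $\StBif$.

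Evaluating on $(\kk^n, \kk^m)$ with $n\ge d$, $m\ge e$, the subbifunctor $T \subset F \boxtimes G$ becomes a submodule of the $S(n,d) \otimes S(m,e)$-module $F(\kk^n) \otimes G(\kk^m)$, and the multiplicity-freeness of $F$ (resp.\ $G$) as a strict polynomial functor translates via the equivalence $\PP_{d,\kk} \simeq S(n,d)\text{-Mod}$ to multiplicity-freeness of $F(\kk^n)$ over $S(n,d)$ (resp.\ $G(\kk^m)$ over $S(m,e)$). The general result of section \ref{sec-reptens} then gives $T = \sum_\alpha F_\alpha \boxtimes G_\alpha$ for some subfunctors $F_\alpha \subset F$ and $G_\alpha \subset G$. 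Applying the exact functor $\Phi$, which preserves images and hence sums of subobjects, yields
$$ S = \Phi(T) = \sum_\alpha \Phi(F_\alpha \boxtimes G_\alpha) = \sum_\alpha F_\alpha \otimes G_\alpha^{(r)}\;, $$
as claimed. Once Theorem \ref{thm-eq} and the algebra-theoretic input from section \ref{sec-reptens} are in hand, the only point that demands attention is the transfer of multiplicity-freeness between the functor and module pictures, which is immediate from the equivalence of categories — so the corollary is essentially a translation exercise.
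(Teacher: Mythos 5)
Your proof is correct and follows exactly the route the paper intends: the corollary is stated as a translation, via the equivalence $\Phi$ of theorem \ref{thm-eq} and the equivalence of $\PP_{d,e,\kk}$ with modules over a tensor product of Schur algebras, of proposition \ref{prop-submodules} from appendix \ref{sec-reptens}, after the easy reduction to homogeneous $F$ and $G$. Your fleshing-out of that reduction (splitting subfunctors across bihomogeneous summands of the same total degree using the $\hom$/$\Ext^1$ vanishing from corollary \ref{thm-1bis}) is a valid elaboration of what the paper leaves implicit.
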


\begin{corollary}[Diagrams]\label{prop-diagrfun}
Assume that $F$ and $G$ are multiplicity-free and the composition factors of $F$ are $p^r$ restricted. Then the diagram associated to $F\otimes G^{(r)}$ as defined in \cite{Alperin} has the functors 
$L_\lambda\otimes L_\mu^{(r)}$ as vertices, where $L_\lambda$ is a composition factor of $F$ and $L_\mu$ is a composition factor of $G$, and there is an edge $L_\lambda\otimes L_\mu^{(r)}\to L_\lambda'\otimes {L_\mu'}^{(r)}$ if and only if one of the following two conditions holds: 
\begin{enumerate}
\item[(i)] $L_\lambda=L'_\lambda$ and there is an edge $L_\mu\to L_\mu'$ in the diagram of $G$,
\item[(ii)] $L_\mu=L'_\mu$ and there is an edge $L_\lambda\to L_\lambda'$ in the diagram of $F$.
\end{enumerate}
\end{corollary}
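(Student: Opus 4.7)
The plan is to use the equivalence of categories $\Phi \colon \StBif \xrightarrow{\simeq} \St(d,e,r)$ from theorem \ref{thm-eq} to reduce the statement to an analogous question about the bifunctor $F\boxtimes G$. Since every composition factor of $F$ is $p^r$-restricted, the bifunctor $F\boxtimes G$ belongs to $\StBif$: its composition factors are of the form $L_\lambda\boxtimes L_\mu$ with $L_\lambda$ a composition factor of $F$ (hence $p^r$-restricted) and $L_\mu$ a composition factor of $G$. Moreover $\Phi$ sends $F\boxtimes G$ to $F\otimes G^{(r)}$ and $L_\lambda\boxtimes L_\mu$ to $L_\lambda\otimes L_\mu^{(r)}$. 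Alperin's diagram is intrinsic to the submodule lattice and composition series of a module, so it is transported by any equivalence of abelian categories. Hence it suffices to compute the diagram of the bifunctor $F\boxtimes G$.

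Next I would transfer the problem to an algebraic setting. Evaluation on $(\kk^d,\kk^e)$ identifies $\PP_{d,e,\kk}$ with the category of modules over the finite-dimensional algebra $S(d,d)\otimes S(e,e)$, and under this identification the bifunctor $F\boxtimes G$ corresponds to the external tensor product of the $S(d,d)$-module $F(\kk^d)$ and the $S(e,e)$-module $G(\kk^e)$, both of which are multiplicity-free by hypothesis. The description of vertices and edges given in corollary \ref{prop-diagrfun} then follows directly from the general description of Alperin's diagram of an external tensor product $M\otimes_\kk N$ of a multiplicity-free $A$-module $M$ and a multiplicity-free $B$-module $N$ over a tensor product $A\otimes B$ of two finite-dimensional algebras, a result to be established in section \ref{sec-reptens}: the vertices are the pairs $S\otimes T$ with $S$ a composition factor of $M$ and $T$ of $N$, and edges arise only from edges in the diagrams of $M$ or $N$ separately.

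The main obstacle is this underlying algebraic statement in section \ref{sec-reptens}. Its heart is the claim that no \emph{diagonal} edge $L_\lambda\boxtimes L_\mu \to L_{\lambda'}\boxtimes L_{\mu'}$ with $\lambda\neq\lambda'$ \emph{and} $\mu\neq\mu'$ can occur. Informally, any nonsplit extension between these endpoints decomposes, via a K\"unneth-type splitting of $\Ext^1$ over $A\otimes B$, into a composite of two extensions, one changing only the $A$-coordinate and one changing only the $B$-coordinate. The intermediate composition factor is a vertex distinct from both endpoints exactly because $M$ and $N$ are multiplicity-free, so the diagonal arrow is never irreducible in Alperin's sense. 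Making this decomposition precise and checking that it yields the two kinds of edges listed in corollary \ref{prop-diagrfun} is the real work; once it is done, the corollary is obtained by direct translation through $\Phi$.
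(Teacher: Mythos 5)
Your proposal matches the paper's approach: the corollary is obtained by transporting the question through the equivalence $\Phi$ of theorem \ref{thm-eq} to the bifunctor category, identified with modules over $S(d,d)\otimes S(e,e)$, and then invoking the general description of Alperin diagrams of external tensor products of multiplicity-free modules established in appendix \ref{sec-reptens}. The only slight divergence is in your heuristic for that appendix result: the paper does not argue via a K\"unneth splitting of $\Ext^1$, but directly from Alperin's definition of edges --- the submodule with head $L_1\boxtimes L_2$ is $U\boxtimes V$, and the head of its radical $\mathrm{Rad}(U)\boxtimes V+U\boxtimes\mathrm{Rad}(V)$ is computed with lemma \ref{lm-head} and the K\"unneth formula in degree $0$.
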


Next statement follows from proposition \ref{prop-localizing}. It uses  that all simple strict polynomial functors satisfy $\Ext^1_{\PP_\kk}(L,L)=0$, which follows from the fact that the Schur algebras are quasi-hereditary. 

\begin{corollary}[Tensor products on the left]\label{prop-tensleft}
Let $\lambda$ be a $p^r$-restricted partition. Let $L_\lambda\otimes \PP_\kk^{(r)}$ denote the full subcategory of $\PP_\kk$ whose objects are the functors isomorphic to tensor products of the form $L_\lambda\otimes F^{(r)}$. Then:
\begin{enumerate}
\item the subcategory $L_\lambda\otimes \PP_\kk^{(r)}$ is localizing and colocalizing,
\item precomposing by $I^{(r)}$ and tensoring by $L_\lambda$ yields an equivalence of categories $\PP_\kk\simeq L_\lambda\otimes \PP_\kk^{(r)}$.
\end{enumerate}
\end{corollary}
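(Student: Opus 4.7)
The plan is to transfer the statement across the equivalence $\Phi\colon\StBif\xrightarrow{\simeq}\St(d,e,r)$ of Theorem \ref{thm-eq} (with $d=|\lambda|$) to a statement about bifunctors of the form $L_\lambda\boxtimes G$, and then invoke the general categorical result Proposition \ref{prop-localizing} from the appendix on tensor products of finite-dimensional algebras.

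First I would fix $d=|\lambda|$ and decompose $L_\lambda\otimes \PP_{\kk}^{(r)}$ according to the degree of the right-hand factor: its homogeneous-of-degree-$(d+p^re)$ part is the full subcategory of $\St(d,e,r)$ whose objects are the tensor products $L_\lambda\otimes F^{(r)}$ with $F$ homogeneous of degree $e$. Since all structural claims (being closed under sums, products, subobjects, quotients and extensions, and the existence of an equivalence to $\PP_{\kk}$) decompose along the grading, it suffices to prove, for each $e$, that this subcategory is localizing and colocalizing inside $\St(d,e,r)$ and that $F\mapsto L_\lambda\otimes F^{(r)}$ is an equivalence $\PP_{e,\kk}\simeq L_\lambda\otimes \PP_{e,\kk}^{(r)}$.

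The key observation is the identity $\Phi(L_\lambda\boxtimes G)(V)=L_\lambda(V)\otimes G(V^{(r)})=(L_\lambda\otimes G^{(r)})(V)$, so under $\Phi$ the subcategory $L_\lambda\otimes \PP_{e,\kk}^{(r)}\subset\St(d,e,r)$ corresponds exactly to the full subcategory $L_\lambda\boxtimes \PP_{e,\kk}\subset \StBif$ (the composition factors of $L_\lambda\boxtimes G$ are $L_\lambda\boxtimes L_\mu$, which are automatically in $\StBif$ because $\lambda$ is $p^r$-restricted). Thus it is enough to show that $L_\lambda\boxtimes\PP_{e,\kk}$ is localizing and colocalizing in $\PP_{d,e,\kk}(2)$ and that $G\mapsto L_\lambda\boxtimes G$ is an equivalence onto this subcategory.

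This last statement is a purely algebraic fact about representations of a tensor product $A\otimes B$ of finite-dimensional algebras ($A=S(n,d)$, $B=S(m,e)$ for $n\ge d$, $m\ge e$): given a simple $A$-module $L$ with $\End_A(L)=\kk$ and $\Ext^1_A(L,L)=0$, the subcategory $L\boxtimes B\text{-Mod}$ of $(A\otimes B)\text{-Mod}$ is localizing and colocalizing, and $M\mapsto L\boxtimes M$ is an equivalence with $B\text{-Mod}$. This is precisely Proposition \ref{prop-localizing}. Its hypotheses are satisfied here: simple strict polynomial functors have endomorphism ring $\kk$ (section \ref{subsec-simple}.\ref{item3}), and $\Ext^1_{\PP_\kk}(L_\lambda,L_\lambda)=0$ follows from the Schur algebras being quasi-hereditary.

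The only step that is not purely formal is verifying that the subcategory of $\PP_{d,e,\kk}(2)$ picked out by Proposition \ref{prop-localizing} really coincides with $L_\lambda\boxtimes \PP_{e,\kk}$ under the equivalence $\PP_{d,e,\kk}(2)\simeq (S(n,d)\otimes S(m,e))\text{-Mod}$; I expect this to be the main bookkeeping obstacle, but it should follow directly from the compatibility of external tensor product with the algebra tensor product structure recalled in section \ref{subsec-cross}. Once this identification is made, assembling the pieces over all $e$ yields both claims of the corollary.
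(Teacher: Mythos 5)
Your proposal is correct and follows essentially the same route as the paper, which derives the corollary by combining the equivalence $\Phi$ of Theorem \ref{thm-eq} with Proposition \ref{prop-localizing}, using $\End_{\PP_\kk}(L_\lambda)=\kk$ and $\Ext^1_{\PP_\kk}(L_\lambda,L_\lambda)=0$. The degree-by-degree reduction and the identification of $L_\lambda\otimes\PP_{e,\kk}^{(r)}$ with $L_\lambda\boxtimes\PP_{e,\kk}$ under $\Phi$ are exactly the intended bookkeeping.
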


\section{Application to internal tensor products}\label{sec-tensint}

The purpose of this section is to study the internal tensor product of simple functors. In particular theorem \ref{thm-SteinInternal} plays a role for internal tensor products similar to the role of the Steinberg theorem for ordinary tensor products. 

\subsection{Internal tensor products of simple functors}
Let $F_1$ and $G_1$, (resp. $F_2$ and $G_2$) be two homogeneous functor of degree $d$ (resp. $e$).  The internal tensor product is equipped with a coproduct 
$$(F_1\otimes F_2)\uotimes (G_1\otimes G_2) \to (F_1\uotimes G_1)\otimes (F_2\uotimes G_2)\;.$$
To be more specific, this coproduct coincides on the standard projectives with the following composite (where the first map is the canonical inclusion and the second one is the canonical projection):
\begin{align*}(\Gamma^{d,T}\otimes \Gamma^{e,U})\uotimes (\Gamma^{d,V}\otimes \Gamma^{e,W}) &\hookrightarrow (\Gamma^{d+e,T\oplus U}\uotimes \Gamma^{d+e,V\oplus W})\\
&= \Gamma^{d+e, (T\oplus U)\otimes (V\oplus W)}\\
&\twoheadrightarrow \Gamma^{d,T\otimes V}\otimes \Gamma^{e,U\otimes W}\\
&= (\Gamma^{d,T}\uotimes \Gamma^{d,V})\otimes (\Gamma^{e,U}\uotimes \Gamma^{e,W})\;.
\end{align*}
The following proposition is a consequence of corollary \ref{thm-1bis}. 

\begin{proposition}\label{prop-1ter}
Let $F,G,X,Y$ be homogeneous strict polynomial functors, and let $r\ge 0$. If $\deg F< \deg G$ and $G$ is left $p^r$-bounded, or if $\deg F> \deg G$ and $F$ is left $p^r$-bounded, then 
$$(F\otimes X^{(r)})\uotimes (G\otimes Y^{(r)})=0\;.$$
If $\deg F=\deg G$ and $F$ or $G$ is left $p^r$-bounded, then the coproduct induces an isomorphism:
$$(F\otimes X^{(r)})\uotimes (G\otimes Y^{(r)}) \simeq (F\uotimes G)\otimes (X\uotimes Y)^{(r)}\;.$$
\end{proposition}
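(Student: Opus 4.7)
The plan is to dualize via $\sharp$, evaluate pointwise, and reduce to corollary \ref{thm-1bis}. First, I would reduce to the case where $F$, $G$, $X$, $Y$ are all finite: every functor is the union of its finite subfunctors, a left $p^r$-bounded functor is the union of finite left $p^r$-bounded subfunctors (lemma \ref{lm-elem}), and $\otimes$, $\uotimes$ (a left adjoint), and $-^{(r)}$ all commute with filtered colimits. The K\"unneth condition is then automatically satisfied.

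Next, using $(A \uotimes B)^\sharp \simeq \uHom(A, B^\sharp)$ from section \ref{subsec-internal} together with the compatibility of $\sharp$ with $\otimes$ and Frobenius twist, I would rewrite
\[
((F \otimes X^{(r)}) \uotimes (G \otimes Y^{(r)}))^\sharp \simeq \uHom(F \otimes X^{(r)},\, G^\sharp \otimes Y^{\sharp(r)}).
\]
Evaluating at a vector space $V$ and identifying $(F \otimes X^{(r)})^V \simeq F^V \otimes (X^{V^{(r)}})^{(r)}$ (which follows from the natural identification $\hom(V,W)^{(r)} \simeq \hom(V^{(r)}, W^{(r)})$), this value becomes the Hom group $\hom(F^V \otimes (X^{V^{(r)}})^{(r)},\, G^\sharp \otimes Y^{\sharp(r)})$.

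By the symmetry of $\uotimes$ I may assume $G$ is the left $p^r$-bounded factor. Through corollary \ref{cor-bounded} and the self-duality of simples, the left $p^r$-boundedness of $G$ translates into the requisite hypothesis on $G^\sharp$ for one of the conditions of corollary \ref{thm-1bis} applied to the quadruple $(F^V, G^\sharp, X^{V^{(r)}}, Y^\sharp)$, and the degree comparison in the proposition's hypothesis matches the degree comparison appearing in the corollary. The resulting cup-product iso then identifies the above Hom with $\hom(F^V, G^\sharp) \otimes \hom(X^{V^{(r)}}, Y^\sharp)$, which is the value at $V$ of $\uHom(F, G^\sharp) \otimes \uHom(X, Y^\sharp)^{(r)} \simeq ((F \uotimes G) \otimes (X \uotimes Y)^{(r)})^\sharp$. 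Applying $\sharp$ back to this natural iso yields the iso in the equal-degree case; in the strict-inequality cases the target $(F \uotimes G) \otimes (X \uotimes Y)^{(r)}$ vanishes since $F \uotimes G = 0$ for homogeneous functors of distinct degrees, and the cup-product iso then forces the LHS to vanish too.

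The main obstacle I anticipate is verifying that the natural iso produced by corollary \ref{thm-1bis} corresponds, after $\sharp$-dualization, to the specific coproduct map described in the statement of the proposition (rather than some other natural iso of the same source and target); this is a compatibility check on the standard projectives $\Gamma^{d, T}$, $\Gamma^{e, U}$, where the coproduct is defined as the explicit composite of inclusion into $\Gamma^{d+e,T\oplus U}\uotimes\Gamma^{d+e,V\oplus W}$ and projection onto $\Gamma^{d,T\otimes V}\otimes\Gamma^{e,U\otimes W}$.
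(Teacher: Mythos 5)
Your proposal is correct and follows essentially the same route as the paper's proof: reduce to finite functors, dualize via $(A\uotimes B)^\sharp\simeq \uHom(A,B^\sharp)$, evaluate on parameterized functors so that corollary \ref{thm-1bis} applies (the paper keeps the boundedness hypothesis on $F^V$ rather than transporting it to $G^\sharp$, but this is immaterial), and then observe that the coproduct is dual to the cup product. The final compatibility you flag as the main obstacle is exactly what the paper settles with its commutative diagram relating the coproduct to the cup product via the canonical maps, so your outline matches the intended argument.
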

\begin{proof}
In this proof, we assume that $\deg F\ge \deg G$ and $F$ is $p^r$-bounded (the proof is similar if $\deg F\le \deg G$ and $G$ is $p^r$-bounded). Since the internal tensor product is right exact and commutes with arbitrary direct sums, it suffices to prove proposition \ref{prop-1ter} when $G$ and $Y$ are finite. 

Since $F$ is left $p^r$-bounded, the parameterized functor $F^V$ also is. Hence, if $\deg F>\deg G$, corollary \ref{thm-1bis} implies that 
$$\uHom(F\otimes X^{(r)},G^\sharp\otimes (Y^\sharp)^{(r)})=0\;.\qquad  (*) $$
Since $G$ and $Y$ are finite, $G^\sharp\otimes (Y^\sharp)^{(r)}$ is isomorphic to $(G\otimes Y^{(r)})^\sharp$, hence equality $(*)$ can be reinterpreted as 
$$\left(\,(F\otimes X^{(r)})\uotimes (G\otimes Y^{(r)})\,\right)^\sharp=0\;.$$
This proves the asserted cancellation. Assume now that $\deg F=\deg G$. Then by corollary \ref{thm-1bis} the coproduct induces an isomorphism:
$$\uHom(F,G)\otimes \uHom(X,Y)^{(r)}\simeq \uHom(F\otimes X^{(r)},Y\otimes Y^{(r)})\;.\quad(**) $$
But the coproduct is dual to the cup product, that is, for all functors $F$, $G$, $H$, $K$ there is a commutative diagram, in which the horizontal isomorphisms are the canonical isomorphisms recalled in section \ref{subsec-internal}:
$$ 
\xymatrix{
(F\uotimes H)^\sharp\otimes (G\uotimes K)^\sharp\ar[d]^-{\mathrm{can}}\ar[r]^-{\simeq}&\uHom(F,H^\sharp)\otimes \uHom(G,K^\sharp)\ar[d]^{\cup}\\
(F\uotimes H)\otimes (G\uotimes K))^\sharp \ar[d]^-{\mathrm{coproduct}^\sharp}&
\uHom(F\otimes H,G^\sharp\otimes K^\sharp)\ar[d]^-{\mathrm{can}}\\
(F\otimes H,G\otimes K)^\sharp\ar[r]^-{\simeq}& \uHom(F\otimes H,(G\otimes K)^\sharp)
}\;.
$$
If the functors $G$ and $K$ are finite so is $G\uotimes K$ and the canonical maps denoted `$\mathrm{can}$' in the diagram above are isomorphisms. Thus, the isomorphism of propositition \ref{prop-1ter} can be deduced from the diagram above with $H=X^{(r)}$ and $K=Y^{(r)}$, and from isomorphism $(**)$.
\end{proof}

The following theorem reduces the study of internal tensor products of simple functors to the case of $p$-restricted simple functors. In other terms, it plays the same role for internal tensor products as the classical Steinberg tensor product theorem does for ordinary tensor products.

\begin{theorem}\label{thm-SteinInternal}
Let $\lambda^0,\dots,\lambda^r$ and $\mu^0,\dots,\mu^s$ be $p$-restricted partitions, and let $\lambda=\sum p^i\lambda^i$ and $\mu=\sum p^i\mu^i$. 
\begin{enumerate}
\item If $r=s$ and $\mu^i$ and $\lambda^i$ have the same weight for all $i$, then $L_\lambda\uotimes L_\mu$ is nonzero and there is an isomorphism:
$$L_\lambda\uotimes L_\mu\simeq (L_{\lambda^0}\uotimes L_{\mu^0})\otimes (L_{\lambda^1}\uotimes L_{\mu^1})^{(1)}\otimes \dots \otimes (L_{\lambda^r}\uotimes L_{\mu^r})^{(r)}\;. $$
\item Otherwise, $L_\lambda\uotimes L_\mu$ is zero.
\end{enumerate} 
\end{theorem}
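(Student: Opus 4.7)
The strategy is to induct on $n:=\max(r,s)$, peeling off one Frobenius layer at a time via Steinberg tensor product theorem and Proposition~\ref{prop-1ter}. I first pad both $p$-adic expansions with trailing empty partitions so that both run from $0$ to $n$; under this convention the theorem's hypothesis ``$r=s$ and $|\lambda^i|=|\mu^i|$ for all $i$'' becomes simply ``$|\lambda^i|=|\mu^i|$ for all $0\le i\le n$'', which is exactly what the iteration will test at each step. The case $|\lambda|\ne|\mu|$ is trivial since $\uotimes$ is degree-wise.

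By Steinberg, $L_\lambda\simeq L_{\lambda^0}\otimes L_{\lambda'}^{(1)}$ with $\lambda':=\sum_{i\ge 1}p^{i-1}\lambda^i$, and analogously $L_\mu\simeq L_{\mu^0}\otimes L_{\mu'}^{(1)}$. Since $\lambda^0$ and $\mu^0$ are $p$-restricted, Corollary~\ref{cor-bounded} gives that $L_{\lambda^0}$ and $L_{\mu^0}$ are left $p$-bounded, which is the hypothesis of Proposition~\ref{prop-1ter} with twist exponent $1$. The proposition yields $L_\lambda\uotimes L_\mu=0$ as soon as $|\lambda^0|\ne|\mu^0|$, and otherwise a natural isomorphism
$$L_\lambda\uotimes L_\mu\simeq (L_{\lambda^0}\uotimes L_{\mu^0})\otimes (L_{\lambda'}\uotimes L_{\mu'})^{(1)}.$$
Feeding $L_{\lambda'}\uotimes L_{\mu'}$ (whose expansions have length $n-1$) into the induction hypothesis and reassembling delivers both the zero/isomorphism dichotomy and the explicit product formula claimed in the theorem.

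It remains to establish the nonvanishing in case~(1), i.e.\ that each atomic factor $L_{\lambda^i}\uotimes L_{\mu^i}$ with $\lambda^i,\mu^i$ $p$-restricted of common weight $d_i$ is nonzero. The natural tool is the Schur functor $f_{d_i}:\PP_{d_i,\kk}\to\kk\Si_{d_i}\text{-Mod}$, which intertwines the internal tensor product with the diagonal tensor product of $\kk\Si_{d_i}$-modules. I would verify this compatibility on the standard projectives, using $\Gamma^{d_i,V}\uotimes\Gamma^{d_i,W}=\Gamma^{d_i,V\otimes W}$ together with $f_{d_i}(\Gamma^{d_i,V})=V^{\otimes d_i}$, and then extend it to arbitrary objects via the exactness of $f_{d_i}$ and the right exactness and sum preservation of both tensor products. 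Since Clausen and James' theorem guarantees $f_{d_i}(L_{\lambda^i})$ and $f_{d_i}(L_{\mu^i})$ are nonzero, their diagonal tensor product is a nonzero vector space, forcing $L_{\lambda^i}\uotimes L_{\mu^i}\ne 0$.

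The main obstacle I expect is not the peeling (which is a direct application of Proposition~\ref{prop-1ter}) but the compatibility of $f_{d_i}$ with the internal tensor product: this is folklore but is not established in the excerpt, so giving a clean proof of it from the definition of $\uotimes$ would be the heaviest part of the argument.
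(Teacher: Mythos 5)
Your proposal is correct and follows essentially the same route as the paper: the paper's proof of the theorem is exactly your ``peeling'' step, namely write $L_\lambda$ and $L_\mu$ as twisted tensor products of $p$-restricted simples via the Steinberg theorem, note these factors are $p$-bounded by corollary \ref{cor-bounded}, and apply proposition \ref{prop-1ter} (iterated, as in your induction) to get both the vanishing dichotomy and the product formula. The nonvanishing in case (1) is likewise handled in the paper by reducing to the symmetric group: proposition \ref{prop-Schur-Kronecker} states precisely your compatibility $f_d(F\uotimes G)\simeq f_d(F)\otimes f_d(G)$ (Kronecker product), and combined with Clausen--James it gives $L\uotimes L'\ne 0$ for $p$-restricted simples. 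The only point where you diverge is the proof of that compatibility: you propose a direct Day-convolution check on the standard projectives $\Gamma^{d,V}$ followed by right-exact extension, whereas the paper deduces it from the $\Si_d$-equivariant identification $F\uotimes\otimes^d\simeq\otimes^d\otimes f_d(F)$ (lemma \ref{lm-iso-iths}), obtained by extracting the weight-$(1,\dots,1)$ summand of the parametrized isomorphism $\uHom(\Gamma^{d,V},F)\simeq F\uotimes\Gamma^d_V$ and then associating $F\uotimes(G\uotimes\otimes^d)$. Both work; the paper's route sidesteps the naturality and $\Si_d$-equivariance bookkeeping that you correctly identify as the heaviest part of your version.
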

\begin{proof}
The classical Steinberg tensor product theorem shows that $L_\lambda=L_{\lambda^0}\otimes\dots \otimes L_{\lambda^r}^{(r)}$ and $L_\mu=L_{\mu^0}\otimes\dots \otimes L_{\mu^s}^{(s)}$ where the $L_{\lambda^i}$ and the $L_{\mu^j}$ are $p$-restricted, hence right $p$-bounded by corollary \ref{cor-bounded}. Hence the result follows by applying proposition \ref{prop-1ter}.
\end{proof}

\subsection{The case of $p$-restricted simple functors}\label{subsec-intpres}
To investigate internal tensor products of $p$-restricted simple functors, we will rely on the Schur functor.
\begin{lemma}\label{lm-iso-iths}
For all strict polynomial functors $F$, there are isomorphisms of functors, natural with respect to $F$:
$$F\uotimes \otimes^d\simeq \uHom(\otimes^{d},F)\simeq \otimes^d\otimes f_d(F)\;.$$
Moreover, if we consider the action of $\Si_d$ on the left hand side induced by the left action of $\Si_d$ on $\otimes^d$, the action on the middle term induced by the right action of $\Si_d$ on $\otimes^d$, and the diagonal action of $\Si_d$ on the right hand side, then these isomorphisms are $\Si_d$-equivariant.
\end{lemma}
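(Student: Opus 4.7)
The plan is to prove the two displayed isomorphisms in turn: first the right-hand iso $\uHom(\otimes^d, F)\simeq\otimes^d\otimes f_d(F)$ by direct computation, and then the left-hand iso $F\uotimes\otimes^d\simeq\uHom(\otimes^d, F)$ by checking it on standard projectives and extending by naturality.

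For the right-hand iso I will unfold the definition $\uHom(\otimes^d, F)(V)=\hom_{\PP_{d,\kk}}((\otimes^d)^V, F)$. Since $V$ is finite-dimensional, the parametrized functor factors as
\[
(\otimes^d)^V(W)=(V^*\otimes W)^{\otimes d}\simeq V^{*\otimes d}\otimes W^{\otimes d},
\]
the shuffle carrying the permutation of the $d$ copies of $V^*\otimes W$ to the diagonal $\Si_d$-action on the right. Hence $(\otimes^d)^V\simeq V^{*\otimes d}\otimes\otimes^d$ as strict polynomial functors of $W$, and tensor--hom adjunction gives
\[
\uHom(\otimes^d, F)(V)\simeq\hom_\kk(V^{*\otimes d},f_d(F))\simeq V^{\otimes d}\otimes f_d(F)=(\otimes^d\otimes f_d(F))(V),
\]
naturally in $V$ and $F$.

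For the left-hand iso I first record the identity $\Gamma^{d,V}\uotimes G\simeq G^V$ for all $G\in\PP_{d,\kk}$, which follows from $\Gamma^{d,V}\uotimes\Gamma^{d,W}\simeq\Gamma^{d,V\otimes W}\simeq(\Gamma^{d,W})^V$ on standard projectives, extended by the right exactness of $-\uotimes\Gamma^{d,V}$. Applied to $G=\otimes^d$ this yields $\Gamma^{d,V}\uotimes\otimes^d\simeq V^{*\otimes d}\otimes\otimes^d$. On the other side, combining the right-hand iso with $f_d(\Gamma^{d,V})\simeq V^{*\otimes d}$ (the multilinear summand of $\Gamma^{d,V}(\kk^d)=\Gamma^d((V^*)^d)$, cut out by the idempotent projecting $\Gamma^{d,\kk^d}$ onto $\otimes^d$) gives $\uHom(\otimes^d,\Gamma^{d,V})\simeq V^{*\otimes d}\otimes\otimes^d$. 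The two identifications agree naturally in $V$. Both $F\mapsto F\uotimes\otimes^d$ and $F\mapsto\uHom(\otimes^d, F)$ are right exact and commute with direct sums ($-\uotimes\otimes^d$ is a left adjoint, and $\uHom(\otimes^d,-)\simeq\otimes^d\otimes f_d(-)$ inherits exactness from that of $f_d$). Presenting an arbitrary $F$ as a cokernel of a map between direct sums of standard projectives and applying both functors then extends the identification to a natural isomorphism $F\uotimes\otimes^d\simeq\uHom(\otimes^d, F)$.

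The main obstacle will be matching the three $\Si_d$-actions under the constructed isomorphisms. I expect to handle this by tracing the action through each step: the key compatibility arises in the proof of the right-hand iso, where the shuffle converts the permutation of the $d$ copies of $V^*\otimes W$ into the diagonal action on $V^{*\otimes d}\otimes W^{\otimes d}$, which then produces the advertised diagonal action on $V^{\otimes d}\otimes f_d(F)$ after applying $\hom_{\PP_{d,\kk}}(-,F)$ and tensor--hom adjunction. The equivariance of the left-hand iso then follows from the uniqueness of the natural extension of the tautological identification on standard projectives.
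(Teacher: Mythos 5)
Your proof is correct, and the second isomorphism $\uHom(\otimes^d,F)\simeq \otimes^d\otimes f_d(F)$ is obtained exactly as in the paper (via $(\otimes^d)^V\simeq V^{*\otimes d}\otimes\otimes^d$ and adjunction). For the first isomorphism, however, you take a genuinely different route. The paper starts from the parametrized isomorphisms $\uHom(\Gamma^{d,V},F)\simeq F_V\simeq F\uotimes\Gamma^d_V$, specializes to $V=\kk^d$, and extracts the weight $(1,\dots,1)$ summand for the torus $(\mathbb{G}_m)^{\times d}$, using that the multilinear part of $\Gamma^{d,\kk^d}$ (and of $\Gamma^d_{\kk^d}$) is $\otimes^d$; this produces $F\uotimes\otimes^d\simeq\uHom(\otimes^d,F)$ for all $F$ in one stroke, with no presentation argument, and makes the $\Si_d$-equivariance fairly transparent since $\Si_d\subset GL_d(\kk)$ acts on the weight $(1,\dots,1)$ space on both sides of the adjunction. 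You instead compare the two functors $F\mapsto F\uotimes\otimes^d$ and $F\mapsto\uHom(\otimes^d,F)$ on the standard projectives $\Gamma^{d,V}$ and extend by right exactness and compatibility with direct sums. This works, but note two points you should make explicit: (i) for the extension to arbitrary $F$ you need the identification on projectives to be natural with respect to \emph{all} morphisms $\Gamma^{d,V}\to\Gamma^{d,W}$, i.e.\ with respect to elements of $\Gamma^d(\hom_\kk(W,V))$ and not merely linearly in $V$ --- this is what ``natural in $V$'' must mean here; and (ii) the $\Si_d$-equivariance of the first isomorphism is not a formal consequence of uniqueness of the extension alone: uniqueness only transports whatever holds on projectives, so you still have to check on $\Gamma^{d,V}$ that the automorphism induced by the left action of $\sigma$ on the $\uotimes$-side matches the one induced by the right action on the $\uHom$-side (both being the parametrized map $\sigma^V$ of $(\otimes^d)^V$). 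These verifications are routine --- the paper itself dispatches the equivariance with ``one easily checks'' --- but they are where the content of the left/right bookkeeping in the statement lives.
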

\begin{proof}
We have isomorphisms of strict polynomial functors, natural with respect to $V$ and $F$: 
$$\uHom(\Gamma^{d,V},F)\simeq F_V\simeq F\uotimes \Gamma^d_V\;.\qquad (*)$$
Take $V=\kk^d$ and let the torus $(\mathbb{G}_m)^{\times d}$ act on $\kk^d$ by $(\lambda_1,\dots,\lambda_d)\cdot (x_1,\dots,x_d)=(\lambda_1x_1,\dots,\lambda_dx_d)$. Then the summand of weight $(1,\dots,1)$ of the right hand side of isomorphism $(*)$ is $F\,\uotimes\, \otimes^d$, and it is isomorphic to the summand of weight $(1,\dots,1)$ of the left hand side, which is $\uHom(\otimes^d,F)$. 
Moreover, $\uHom(\otimes^d,F)$ is isomorphic to the functor $U\mapsto \hom_{\PP_\kk}((\otimes^d)^U,F)$. Since $(\otimes^d)^{U}$ is isomorphic to $(U^{\vee})^{\otimes d}\otimes\otimes^d$, we get an isomorphism of strict polynomial functors with variable $U$:
$$\uHom(\otimes^d,F)\simeq \hom_{\PP_\kk}((U^{\vee})^{\otimes d}\otimes\otimes^d,F)\simeq U^{\otimes^d}\otimes f_d(F)\;.$$
Finally, one easily checks that these explicit constructions of the isomorphisms of lemma \ref{lm-iso-iths} yield $\Si_d$-equivariant isomorphisms.
\end{proof}

\begin{proposition}\label{prop-Schur-Kronecker}
For all functors $F$, $G$, there is an isomorphism of $\kk\Si_d$-modules, where the tensor product on the right is the Kronecker product of $f_d(F)$ and $f_d(G)$ (i.e. $\Si_d$ acts diagonally):
$$f_d(F\uotimes G)\simeq f_d(F)\otimes f_d(G)\;.$$ 
\end{proposition}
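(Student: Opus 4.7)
The plan is to compute the $\Si_d$-equivariant strict polynomial functor $(F\uotimes G)\uotimes \otimes^d$ in two different ways, using Lemma \ref{lm-iso-iths} and the associativity of $\uotimes$. On one hand, applying Lemma \ref{lm-iso-iths} directly to the functor $F\uotimes G$ yields a $\Si_d$-equivariant isomorphism
$$(F\uotimes G)\uotimes \otimes^d \simeq \otimes^d \otimes f_d(F\uotimes G),$$
where $\Si_d$ acts diagonally on the right (by the permutation action on $\otimes^d$ and by the natural left action on $f_d(F\uotimes G)$ induced by the right $\Si_d$-action on $\otimes^d$).

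On the other hand, I will use the associativity of $\uotimes$ to rewrite $(F\uotimes G)\uotimes \otimes^d$ as $F\uotimes(G\uotimes \otimes^d)$. Applying Lemma \ref{lm-iso-iths} to $G$ gives $G\uotimes \otimes^d \simeq \otimes^d \otimes f_d(G)$. Since $f_d(G)$ is merely a $\Si_d$-equivariant vector space, it pulls outside the $\uotimes$, yielding $F\uotimes (\otimes^d \otimes f_d(G)) \simeq (F\uotimes \otimes^d)\otimes f_d(G)$. A second application of Lemma \ref{lm-iso-iths}, this time to $F$, then produces
$$F\uotimes(G\uotimes \otimes^d) \simeq \otimes^d \otimes f_d(F)\otimes f_d(G),$$
with $\Si_d$ acting diagonally on all three factors.

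Comparing the two expressions provides a $\Si_d$-equivariant isomorphism of strict polynomial functors
$$\otimes^d \otimes f_d(F\uotimes G) \simeq \otimes^d \otimes f_d(F) \otimes f_d(G),$$
with diagonal actions on both sides. To extract the proposition, I will evaluate at $V=\kk$: because $(\otimes^d)(\kk)=\kk$ carries the trivial $\Si_d$-action, the diagonal actions collapse onto the multiplicity spaces, producing the desired $\kk\Si_d$-module isomorphism $f_d(F\uotimes G) \simeq f_d(F)\otimes f_d(G)$ with the diagonal (Kronecker) action on the right.

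The hard part will be purely a bookkeeping one, namely verifying at each step that the isomorphisms are $\Si_d$-equivariant with the correct action. This reduces to the $\Si_d$-equivariance clause of Lemma \ref{lm-iso-iths} together with the elementary observation that tensoring two $\Si_d$-equivariant objects in a symmetric monoidal category gives the diagonal action; no new cohomological input is required.
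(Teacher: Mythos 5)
Your proof is correct and follows essentially the same route as the paper: both compute $(F\uotimes G)\uotimes \otimes^d \simeq F\uotimes(G\uotimes \otimes^d)$ in two ways using Lemma \ref{lm-iso-iths} together with associativity and $\kk$-linearity of $\uotimes$, and then evaluate at $\kk$ to extract the Kronecker product. The equivariance bookkeeping you defer to is exactly what the equivariance clause of Lemma \ref{lm-iso-iths} supplies, as in the paper.
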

\begin{proof}
Lemma \ref{lm-iso-iths} yields a chain of isomorphisms:
$$F\uotimes (G\uotimes \otimes^d)\simeq F\uotimes (\otimes^d \otimes f_d(G))\simeq 
(F\uotimes \otimes^d)\otimes f_d(G)\simeq \otimes^d\otimes f_d(F)\otimes f_d(G)\;.$$
Thus the evaluation of $F\uotimes (G\uotimes \otimes^d)$ at $\kk$ is isomorphic to $f_d(F)\otimes f_d(G)$. On the other hand $F\uotimes (G\uotimes \otimes^d)$ is isomorphic to $(F\uotimes G)\uotimes \otimes^d$ and lemma \ref{lm-iso-iths} shows that the evaluation of the latter at $\kk$ is isomorphic to $f_d(F\otimes G)$.
\end{proof}

The following corollary shows that in the first case of theorem \ref{thm-SteinInternal}, the internal tensor product is always nonzero.

\begin{corollary}
Let $L$ and $L'$ be two $p$-restricted simples. Then $L\uotimes L'$ is nonzero.
\end{corollary}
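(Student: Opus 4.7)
The proof should be a short application of Proposition \ref{prop-Schur-Kronecker} combined with Clausen and James' theorem.

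My plan is as follows. Write $L = L_\lambda$ and $L' = L_\mu$, and note that for $L \uotimes L'$ to live in a nontrivial $\PP_{d,\kk}$ we must have $|\lambda| = |\mu| = d$. Since both $\lambda$ and $\mu$ are $p$-restricted, Clausen and James' theorem (as recalled in section \ref{sec-eq-def}) asserts that $\hom_{\PP_\kk}(L_\lambda, \otimes^d) \neq 0$ and $\hom_{\PP_\kk}(L_\mu, \otimes^d) \neq 0$, that is, the $\kk\Si_d$-modules $f_d(L)$ and $f_d(L')$ are both nonzero.

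Now I would invoke Proposition \ref{prop-Schur-Kronecker}, which provides an isomorphism of $\kk\Si_d$-modules
\[
f_d(L \uotimes L') \simeq f_d(L) \otimes f_d(L').
\]
Since $f_d(L)$ and $f_d(L')$ are both nonzero $\kk$-vector spaces, their underlying tensor product is a nonzero $\kk$-vector space, so $f_d(L \uotimes L')$ is nonzero. A fortiori $L \uotimes L'$ itself must be nonzero, which is the desired conclusion.

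There is no real obstacle here: the work has been done in Proposition \ref{prop-Schur-Kronecker} (which translates the internal tensor product into the ordinary Kronecker product under $f_d$) and in Clausen and James' theorem (which guarantees that $p$-restricted simple functors survive the Schur functor). The only mild subtlety is checking that the statement makes sense only when $L$ and $L'$ share a degree, since $\uotimes$ is defined inside each fixed $\PP_{d,\kk}$; this should be stated explicitly at the outset of the proof.
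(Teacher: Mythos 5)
Your proof is correct and is essentially identical to the paper's: both invoke Clausen and James' theorem to see that $f_d(L)$ and $f_d(L')$ are nonzero, then apply Proposition \ref{prop-Schur-Kronecker} to conclude that $f_d(L\uotimes L')$, and hence $L\uotimes L'$, is nonzero.
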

\begin{proof}
By Clausen and James' theorem, $f_d(L)$ and $f_d(L')$ are nonzero. Hence by proposition \ref{prop-Schur-Kronecker}, $f_d(L\uotimes L')$ is nonzero. Thus  $L\uotimes L'$ is nonzero.
\end{proof}

Given two $p$-restricted simples $L$ and $L'$ a natural question is to determine if the analogue of theorem \ref{thm-tensprespres} holds, i.e. if the nonzero functor $L\uotimes L'$ is simple. In fact, Bessenrodt and Kleshchev have proved \cite{BK} that the Kronecker product of two simple representations of symmetric groups is almost never simple. In particular, proposition \ref{prop-Schur-Kronecker} has the following consequence in odd characteristic.
\begin{corollary}\label{cor-intss}
Assume that $p$ is odd. Let $L$ and $L'$ be two $p$-restricted simples such that $f_d(L)$ and $f_d(L')$ both have dimension at least two. Then $L\uotimes L'$ is not simple.
\end{corollary}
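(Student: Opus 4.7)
The plan is to push the question through the Schur functor $f_d$ and reduce it to a statement about Kronecker products of simple representations of $\Si_d$, for which the Bessenrodt--Kleshchev theorem is available.

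First I would record the two inputs. Since $L$ and $L'$ are $p$-restricted simples, Clausen and James' theorem guarantees that $f_d(L)$ and $f_d(L')$ are nonzero, and in fact simple $\kk\Si_d$-modules (they are the simple modules indexed by $p$-regular partitions). Proposition \ref{prop-Schur-Kronecker} then identifies
\[
f_d(L\uotimes L')\simeq f_d(L)\otimes f_d(L')
\]
as $\kk\Si_d$-modules, where the right-hand side carries the Kronecker (diagonal) action.

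Next I would invoke the Bessenrodt--Kleshchev theorem from \cite{BK}: in odd characteristic, the Kronecker product of two simple $\kk\Si_d$-modules each of dimension at least two is never a simple $\kk\Si_d$-module. Under our hypotheses $\dim f_d(L)\ge 2$ and $\dim f_d(L')\ge 2$, so $f_d(L\uotimes L')$ is a nonzero, non-simple $\kk\Si_d$-module.

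To conclude, I would use that $f_d$ is a quotient functor, hence sends any simple object of $\PP_{d,\kk}$ to zero or to a simple $\kk\Si_d$-module. If $L\uotimes L'$ were simple, $f_d(L\uotimes L')$ would have to be zero or simple, contradicting the previous paragraph. Therefore $L\uotimes L'$ is not simple. The only non-routine input in this argument is the Bessenrodt--Kleshchev theorem itself; everything else is a direct consequence of Proposition \ref{prop-Schur-Kronecker} and the general properties of the Schur functor recalled at the end of Section \ref{sec-back}, so this is really the only possible obstacle.
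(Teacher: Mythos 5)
Your proof is correct and follows essentially the same route as the paper: both pass through Proposition \ref{prop-Schur-Kronecker} to identify $f_d(L\uotimes L')$ with the Kronecker product $f_d(L)\otimes f_d(L')$, apply the Bessenrodt--Kleshchev theorem, and conclude using the fact that $f_d$ carries simple functors to simple $\kk\Si_d$-modules or to zero (the paper justifies this last point via $f_d\circ r_d=\Id$ rather than via $f_d$ being a quotient functor, but these are interchangeable). No gaps.
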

\begin{proof}
Since the right adjoint of $f_d$ satisfies $f_d\circ r_d=\Id$, $f_d(L)$ sends simple functors either to simple $\kk\Si_d$-modules or to zero. But $f_d(L\uotimes L')\simeq f_d(L)\otimes f_d(L')$ is a Kronecker product of two simple $\kk\Si_d$-modules, hence is not simple by \cite[Thm 2]{BK}. Thus $L\uotimes L'$ cannot be simple.
\end{proof}

\begin{remark}
Corollary \ref{cor-intss} uses \cite[Thm 2]{BK}, which is a nontrivial result on symmetric groups. It would be interesting to find a more elementary proof of corollary \ref{cor-intss}, in the spirit of the proof of theorem \ref{thm-tensprespres}.
\end{remark}

To solve completely (in odd characteristic) the problem of knowing if an internal tensor product $L\uotimes L'$ can be simple, it remains to study the case where $f_d(L')$ has dimension one. The remainder of the section is devoted to this study. In our discussion below, we show in corollary \ref{cor-calcul} that when $f_d(L')$ has dimension one, $L\uotimes L'$ may sometimes be simple and sometimes not, and in corollary \ref{cor-lesdeuxpareil} we show that it suffices to study the case $L'=Q^d$. The latter case is studied in \cite{Reischuk}, where the simplicity of $L\uotimes Q^d$ is shown to be equivalent to $p(L,1)>1$.

There are two $\kk\Si_d$-modules of dimension $1$, namely the signature module $\kk^{\mathrm{alt}}$ and the trivial module $\kk$. The signature module is the image by the Schur functor of $\Lambda^d=L_{(1,\dots,1)}$. 
Since $\hom_{\PP_{d,\kk}}(\otimes^d,S^d)$ has dimension $1$, and since $S^d$ is a quotient of $\otimes^d$, the head of $S^d$ is a $p$-restricted simple functor. This functor is known under the name of truncated symmetric powers, and we denote it by $Q^d$ as in \cite{BMT}. Then $f_d(Q^d)$ is the trivial $\kk\Si_d$-module.
Thus, to solve completely (in odd characteristic) the problem of knowing if an internal tensor product $L\uotimes L'$ can be simple, it remains to investigate the internal tensor products $L\uotimes Q^d$ and $L\uotimes \Lambda^d$ for $p$-restricted simples $L$. 
\begin{proposition}\label{prop-calcul}
Let $F$ be a homogeneous functor of degree $d$. Consider the right action of $\Si_d$ on $\otimes^d$ given by permuting the factors of the tensor product. If $p\ne 2$ then 
$$F\uotimes \Lambda^d\;\simeq \;(\otimes^d)\otimes_{\Si_d} (\kk^{\mathrm alt}\otimes f_d(F))\;.$$
If $\mathrm{Head}(F)$ is a direct sum of $p$-restricted simples (and $p$ arbitrary), then
$$F\uotimes Q^d\;\simeq \;(\otimes^d)\otimes_{\Si_d} f_d(F)\;.$$ 
\end{proposition}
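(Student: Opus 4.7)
The plan is to deduce both formulas from lemma \ref{lm-iso-iths}, which supplies the $\Si_d$-equivariant isomorphism $F\uotimes\otimes^d\simeq\otimes^d\otimes f_d(F)$ with diagonal $\Si_d$-action on the right, combined with a description of $\Lambda^d$ and $S^d$ as $\Si_d$-quotients of $\otimes^d$ and then a passage from $S^d$ to its head $Q^d$. Since $\uotimes$ is part of a closed symmetric monoidal structure, the functor $F\uotimes(-)$ is a left adjoint and in particular preserves cokernels and coinvariants, so passing to $\Si_d$-quotients in the $\otimes^d$ factor commutes with $F\uotimes(-)$.

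For the alternating formula, the hypothesis $p\neq 2$ is exactly what ensures the presentation $\Lambda^d\simeq\otimes^d\otimes_{\kk\Si_d}\kk^{\mathrm{alt}}$; in characteristic $2$ the relation $v\otimes v=0$ is strictly stronger than sign alternation and this presentation fails. Given this, right exactness yields $F\uotimes\Lambda^d\simeq(F\uotimes\otimes^d)_{\Si_d,\mathrm{sgn}}$, and substituting lemma \ref{lm-iso-iths} identifies the right hand side with the sign coinvariants of $\otimes^d\otimes f_d(F)$ under the diagonal $\Si_d$-action. Standard bookkeeping then rewrites this as $(\otimes^d)\otimes_{\Si_d}(\kk^{\mathrm{alt}}\otimes f_d(F))$, with the diagonal action on $\kk^{\mathrm{alt}}\otimes f_d(F)$, matching the statement.

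For the $Q^d$ formula I will first run the same argument with ordinary coinvariants in place of sign coinvariants: from $S^d\simeq(\otimes^d)_{\Si_d}$ and lemma \ref{lm-iso-iths} one obtains, with no hypothesis on $F$, an isomorphism $F\uotimes S^d\simeq(\otimes^d)\otimes_{\Si_d}f_d(F)$. It then remains to show that the canonical surjection $F\uotimes S^d\twoheadrightarrow F\uotimes Q^d$ is an isomorphism when $\Head(F)$ is a direct sum of $p$-restricted simples. Setting $K:=\ker(S^d\twoheadrightarrow Q^d)$, right exactness of $\uotimes$ reduces this to showing $F\uotimes K=0$. The decisive observation is that every composition factor of $K$ is a non-$p$-restricted simple: since $f_d(S^d)\simeq\kk$ is one-dimensional and $f_d$ is exact, $Q^d$ is the unique composition factor of $S^d$ with nonzero image under $f_d$, and by Clausen--James' theorem the remaining composition factors are precisely the non-$p$-restricted ones. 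For any such $L_\mu$, Steinberg writes $L_\mu\simeq L_{\mu^0}\otimes L_{\mu^1}^{(1)}$ with $|\mu^0|<d$, and proposition \ref{prop-1ter} applied with the quadruple $(F,L_{\mu^0},\kk,L_{\mu^1})$ and $r=1$ yields $F\uotimes L_\mu=0$; here the hypothesis that $\Head(F)$ is $p$-restricted is converted via corollary \ref{cor-bounded} into the statement that $F$ is left $p$-bounded. An induction on the composition length of $K$ then gives $F\uotimes K=0$.

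The only real subtleties are the bookkeeping of left versus right $\Si_d$-actions when converting diagonal coinvariants to the form $(\otimes^d)\otimes_{\Si_d}(-)$, and the identification of the composition factors of $K$; the latter is the heart of the $Q^d$ argument and is what permits proposition \ref{prop-1ter} to annihilate $F\uotimes K$ uniformly.
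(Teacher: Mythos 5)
Your proof is correct, and its overall skeleton matches the paper's: both cases are reduced via lemma \ref{lm-iso-iths} and right exactness of $\uotimes$ to the coinvariant presentations of $\Lambda^d$ and $S^d$ (with $p\ne 2$ entering exactly where you say it does), and the $Q^d$ case is then reduced to showing that $F\uotimes R^d=0$ for $R^d=\ker(S^d\twoheadrightarrow Q^d)$. Where you genuinely diverge is in that last vanishing step. The paper stays inside lemma \ref{lm-iso-iths}: since $f_d(R^d)=0$ (exactness of $f_d$ and $f_d(S^d)=f_d(Q^d)$), the lemma gives $R^d\uotimes\otimes^d=0$, hence $R^d\uotimes P=0$ for every left $p$-bounded projective $P$ (such $P$ is a summand of a sum of copies of $\otimes^d$), and then $F\uotimes R^d=0$ because the hypothesis on $\Head(F)$ makes $F$ a quotient of such a $P$. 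You instead identify the composition factors of $R^d$ as precisely the non-$p$-restricted simples of degree $d$ (via $\dim f_d(S^d)=1$, exactness of $f_d$, and Clausen--James) and kill each one against $F$ using Steinberg together with the vanishing half of proposition \ref{prop-1ter}, concluding by induction on composition length. Both routes are valid; the paper's is more self-contained (only lemma \ref{lm-iso-iths} and the definition of left $p$-boundedness are needed), while yours leans on theorem \ref{thm-1} through proposition \ref{prop-1ter} but yields as a by-product the sharper statement that $F\uotimes L_\mu=0$ for every non-$p$-restricted simple $L_\mu$ of degree $d$, which is in the spirit of theorem \ref{thm-SteinInternal}. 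Your checks of the hypotheses of proposition \ref{prop-1ter} (the degree inequality $|\mu^0|<d$ and the left $p$-boundedness of $F$ obtained from $\Head(F)$ via lemma \ref{lm-elem} and proposition \ref{prop-eq}) are accurate.
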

\begin{proof}
Lemma \ref{lm-iso-iths} yields a $\Si_d$-equivariant isomorphism between $F\uotimes \otimes^d$ and $\otimes^d\otimes f_d(F)$. Taking the coinvariants under the signed action of $\Si_d$, and using right exactness of internal tensor products we obtain the first isomorphism. For the second isomorphism, let $R^d$ be the radical of $S^d$. Since $f_d(S^d)=f_d(Q^d)$ and the Schur functor is exact, we have $f_d(R^d)=0$. Hence by lemma \ref{lm-iso-iths}, $R^d\uotimes \otimes^d$ is zero. But if $P$ is left $p$-bounded projective, it is a direct summands in a direct sums of copies of $\otimes^d$, hence $R^d\uotimes P$ is zero. Now $F$ is left $p$-bounded by corollary \ref{cor-bounded}, hence $R^d\otimes F=F\uotimes R^d=0$. By right exactness of tensor products we thus obtain an isomorphism $F\uotimes S^d\simeq F\uotimes Q^d$. Then the computation of $F\uotimes S^d$ is done in the same fashion as the one of $F\uotimes\Lambda^d$.
\end{proof}

If $M$ is a simple $\Si_d$-module, then $M\otimes\kk^{\mathrm{alt}}$ is also simple. Let $L_\mu$ be the simple $p$-restricted functor such that $f_d(L_\mu)=M$. We denote by $m(\mu)$ the $p$-restricted partition such that $f_d(L_{m(\mu)})=M\otimes\kk^\mathrm{alt}$. The involution $\mu\mapsto m(\mu)$ (or rather $\mu'\mapsto m(\mu')$ where $\mu'$ stands for the conjugate partition of $\mu$) is known as the Mullineux correspondence \cite[Chap. 4.2]{Martin}, and its combinatorial description has been proved by Ford and Kleshchev \cite{FK}, see also the work of Brundan and Kujawa \cite{BrunK} for a more recent and different proof. Proposition \ref{prop-calcul} has the following consequence.
\begin{corollary}\label{cor-lesdeuxpareil}
Let $\mu$ be a $p$-restricted partition. Then $$L_\mu\uotimes \Lambda^d\simeq L_{m(\mu)}\uotimes Q^d\;.$$
\end{corollary}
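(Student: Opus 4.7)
The plan is to chain together the two explicit formulas provided by Proposition \ref{prop-calcul} with the defining property of the Mullineux involution. This reduces the statement to a one-line verification at the level of symmetric group representations.

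First, I would apply the second formula of Proposition \ref{prop-calcul} to the functor $F = L_{m(\mu)}$. Since $L_{m(\mu)}$ is itself a $p$-restricted simple, its head is $p$-restricted, so the hypothesis is satisfied and the formula yields
\[
L_{m(\mu)} \uotimes Q^d \;\simeq\; (\otimes^d) \otimes_{\Si_d} f_d(L_{m(\mu)}) \;.
\]
Next, I would apply the first formula of Proposition \ref{prop-calcul} to $F = L_\mu$ (this requires $p \neq 2$, which is the tacit standing hypothesis since we are in the setting where the Mullineux map is well-defined and compatible with the discussion initiated in Corollary \ref{cor-intss}), obtaining
\[
L_\mu \uotimes \Lambda^d \;\simeq\; (\otimes^d) \otimes_{\Si_d} \bigl(\kk^{\mathrm{alt}} \otimes f_d(L_\mu)\bigr) \;.
\]

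To conclude, I would invoke the very definition of the Mullineux correspondence recalled just before the statement: $m(\mu)$ is characterized by the identity $f_d(L_{m(\mu)}) \simeq f_d(L_\mu) \otimes \kk^{\mathrm{alt}}$ in $\kk\Si_d\text{-Mod}$. Substituting this isomorphism into the first displayed formula and using commutativity of the tensor product of $\kk\Si_d$-modules gives
\[
L_{m(\mu)} \uotimes Q^d \;\simeq\; (\otimes^d) \otimes_{\Si_d} \bigl(\kk^{\mathrm{alt}} \otimes f_d(L_\mu)\bigr) \;\simeq\; L_\mu \uotimes \Lambda^d \;,
\]
which is the desired isomorphism.

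There is essentially no obstacle here: all the real content has already been installed in Proposition \ref{prop-calcul}, which in turn rests on Lemma \ref{lm-iso-iths} and Corollary \ref{cor-bounded}, and on the definition of $m(\mu)$. The only point to be a bit careful about is bookkeeping of the $\Si_d$-actions (signed versus diagonal), but this is exactly what was encoded in the equivariance statement of Lemma \ref{lm-iso-iths} and already handled in the proof of Proposition \ref{prop-calcul}, so one does not need to reopen it here.
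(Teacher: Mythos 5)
Your proposal is correct and is exactly the argument the paper intends: the corollary is stated as an immediate consequence of Proposition \ref{prop-calcul}, obtained by applying its two formulas to $L_{m(\mu)}$ and $L_\mu$ and substituting the defining relation $f_d(L_{m(\mu)})\simeq f_d(L_\mu)\otimes\kk^{\mathrm{alt}}$ of the Mullineux correspondence. Your remark that $p\ne 2$ is the tacit standing hypothesis (needed for the $\Lambda^d$ formula) is consistent with the surrounding discussion, which is explicitly restricted to odd characteristic.
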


As another consequence of proposition \ref{prop-calcul}, we obtain that the internal tensor product of two simple functors may sometimes be simple and sometimes not. The problem of knowing exactly for which $p$-restricted partitions $\mu$ the functor $L_\mu\uotimes \Lambda^d$ is simple is studied in \cite{Reischuk}.
\begin{corollary}\label{cor-calcul}Assume that $p$ is odd.
Then $Q^d\uotimes \Lambda^d$ is isomorphic to $\Lambda^d$, and $\Lambda^d\uotimes \Lambda^d$ is isomorphic to $S^d$.
\end{corollary}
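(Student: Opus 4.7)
The plan is to deduce both isomorphisms directly from the first formula of Proposition \ref{prop-calcul} (the one available in odd characteristic), by computing the Schur functor images of $Q^d$ and $\Lambda^d$ and then identifying the resulting coinvariants.

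First I would record the two relevant Schur functor values. For $\Lambda^d = L_{(1,\dots,1)}$, the module $f_d(\Lambda^d)$ is the sign representation $\kk^{\mathrm{alt}}$ (a standard fact, obtainable for instance from Clausen and James' theorem combined with the natural antisymmetry of the map $\otimes^d \to \Lambda^d$). For $Q^d$, recall from the proof of Proposition \ref{prop-calcul} that $f_d(R^d) = 0$ where $R^d = \ker(S^d \twoheadrightarrow Q^d)$; hence exactness of $f_d$ gives $f_d(Q^d) \simeq f_d(S^d) \simeq \kk$, the trivial $\kk\Si_d$-module (using $\hom_{\PP_{d,\kk}}(\otimes^d, S^d) \simeq \kk$).

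Now I would apply Proposition \ref{prop-calcul} with $F = Q^d$ and $F = \Lambda^d$ respectively. For the first:
\begin{align*}
Q^d \uotimes \Lambda^d \;\simeq\; (\otimes^d) \otimes_{\Si_d} \bigl(\kk^{\mathrm{alt}} \otimes f_d(Q^d)\bigr) \;\simeq\; (\otimes^d) \otimes_{\Si_d} \kk^{\mathrm{alt}}.
\end{align*}
For the second, using that $\kk^{\mathrm{alt}} \otimes \kk^{\mathrm{alt}} \simeq \kk$ as $\kk\Si_d$-modules (since $\mathrm{sgn}^2 = 1$):
\begin{align*}
\Lambda^d \uotimes \Lambda^d \;\simeq\; (\otimes^d) \otimes_{\Si_d} \bigl(\kk^{\mathrm{alt}} \otimes f_d(\Lambda^d)\bigr) \;\simeq\; (\otimes^d) \otimes_{\Si_d} \kk.
\end{align*}

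It then remains to identify the two sign- and trivial-coinvariant functors with $\Lambda^d$ and $S^d$ respectively. The second is immediate: $(\otimes^d) \otimes_{\Si_d} \kk$ evaluates on $V$ to $(V^{\otimes d})_{\Si_d} = S^d(V)$ by the very definition of the symmetric power. The first is the sign-twisted coinvariants of $V^{\otimes d}$, i.e.\ the quotient imposing $v\cdot \sigma = \mathrm{sgn}(\sigma)\, v$ for every $\sigma\in\Si_d$; writing this out for transpositions recovers the usual antisymmetry presentation of $\Lambda^d(V)$. There is no substantive obstacle in any step: the entire proof is an unravelling of Proposition \ref{prop-calcul} with the easy inputs $f_d(Q^d) = \kk$ and $f_d(\Lambda^d) = \kk^{\mathrm{alt}}$, the only sensitive point being the standing assumption $p \neq 2$, which is needed to invoke the first formula of that proposition and also ensures that the identifications above (notably sign-coinvariants $\simeq \Lambda^d$) behave as expected.
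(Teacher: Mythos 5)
Your proof is correct and is exactly the argument the paper intends: the corollary is stated as a direct consequence of Proposition \ref{prop-calcul}, obtained by plugging in $f_d(Q^d)=\kk$ and $f_d(\Lambda^d)=\kk^{\mathrm{alt}}$ and identifying the trivial and sign coinvariants of $\otimes^d$ with $S^d$ and $\Lambda^d$ respectively (the latter valid precisely because $p\neq 2$). No gaps.
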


\section{Estimates for $p(F,r)$ and $i(F,r)$}\label{sec-estimates}

\subsection{Basic properties of $p(F,r)$ and $i(F,r)$}

Let $r$ be a positive integer. We introduce the following two homogeneous functors of degree $d$, where $T^{(d_0,\dots,d_k)}=(\otimes^{d_0})\otimes(\otimes^{d_1})^{(1)}\otimes\dots \otimes (\otimes^{d_k})^{(k)}$ as in corollary \ref{cor-caract}:
\begin{align*}
&L(d,r)=\bigoplus_{\text{$\lambda$ not $p^r$-restricted, and $|\lambda|=d$}}L_\lambda\;, \\
& T(d,r)=\bigoplus_{\sum_{0\le i<r}p^id_i<d\;,\; \sum_{0\le i}p^id_i=d} T^{(d_0,\dots,d_k)}\;.
\end{align*}
These functors are defined so that they contain all the simples of degree $d$, or all the twisted tensor powers of degree $d$, which have at least one factor precomposed by $I^{(s)}$ with $s\ge r$.
Hence they are non zero if and only if $d\ge p^r$. By corollary\ref{cor-caract}, $L(d,r)$ is a quotient of $T(d,r)$. Since these two functors are self-dual, it follows that $L(d,r)$ is also a subfunctor of $T(d,r)$.
\begin{proposition}\label{prop-crithigh} Let $F$ be a homogeneous functor of degree $d$, and let $G(d,r)$ be either equal to 
$L(d,r)$ or to $T(d,r)$.  Then $p(F,r)$, resp. $i(F,r)$,  is the lowest (possibly $+\infty$) degree $k$ such that the vector space $\Ext^k_{\PP_\kk}(F,G(d,r))$, resp. $\Ext^k_{\PP_\kk}(G(d,r),F)$, is nonzero.
\end{proposition}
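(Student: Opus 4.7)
My plan is to prove the statement for $p(F,r)$; the $i(F,r)$ statement will follow from Kuhn duality $F \leftrightarrow F^\sharp$, which swaps $\Gamma^\lambda$ and $S^\lambda$ but fixes the self-dual functors $L(d,r)$ and $T(d,r)$, giving $i(F,r) = p(F^\sharp,r)$ together with $\Ext^k(G(d,r), F) \cong \Ext^k(F^\sharp, G(d,r))$.

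The core will be a single vanishing lemma: $\hom_{\PP_\kk}(P, G(d,r)) = 0$ for every left $p^r$-bounded projective $P$ and $G \in \{L,T\}$. For $G = L$, I would use Proposition \ref{prop-eq} to present $P$ as a direct sum of projective covers of $p^r$-restricted simples, so its head shares no summand with $L(d,r)$. For $G = T$, each summand $T^{(d_0,\dots,d_k)}$ of $T(d,r)$ factors as $A \otimes B^{(r)}$ with $B$ of positive degree (some $d_i > 0$ for $i \geq r$, by definition of $T(d,r)$), whereupon the vanishing is Lemma \ref{lm-annul-prelim}. Feeding this into a projective resolution of $F$ whose first $p(F,r)$ terms are $p^r$-bounded (which exists by definition), the complex $\hom(P_\bullet, G(d,r))$ is zero in degrees $< p(F,r)$, forcing $\Ext^k(F, G(d,r)) = 0$ there. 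If $p(F,r) = \infty$ this already finishes the argument.

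When $n := p(F,r) < \infty$, I would produce a nonzero $\Ext^n(F, G(d,r))$ via syzygies. Iterating the long exact sequences attached to $0 \to \Omega^{i+1} F \to P_i \to \Omega^i F \to 0$ collapses into dimension shifts (using $\Ext^{\geq 1}(P_i, G(d,r)) = 0$ for projective $P_i$, together with $\hom(P_{n-1}, G(d,r)) = 0$ at the final step), yielding $\Ext^n(F, G(d,r)) \cong \hom(\Omega^n F, G(d,r))$. Maximality of $n$ forces $\Omega^n F$ to fail to be left $p^r$-bounded, otherwise a $p^r$-bounded projective cover of $\Omega^n F$ would extend the resolution one more step. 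Hence by Lemma \ref{lm-elem}, $\Head(\Omega^n F)$ contains a non-$p^r$-restricted simple summand $L_\mu$, and composing the surjection $\Omega^n F \twoheadrightarrow L_\mu$ with an embedding $L_\mu \hookrightarrow G(d,r)$ produces the desired nonzero class. For $G = L(d,r)$ this embedding is a tautological summand inclusion; for $G = T(d,r)$ I would use Steinberg's decomposition $L_\mu \cong L_{\mu^0} \otimes L_{\mu^1}^{(1)} \otimes \cdots \otimes L_{\mu^s}^{(s)}$ together with the Clausen--James embeddings $L_{\mu^i} \hookrightarrow \otimes^{|\mu^i|}$ (exactly as in the proof of Corollary \ref{cor-caract}) to obtain $L_\mu \hookrightarrow T^{(|\mu^0|, |\mu^1|, \dots, |\mu^s|)}$, and this last tensor power sits inside $T(d,r)$ precisely because the non-$p^r$-restrictedness of $\mu$ forces some $|\mu^i| > 0$ with $i \geq r$.

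I do not anticipate any real obstacle: the whole strategy is the classical ``compute $\Ext$ through syzygies and detect by heads'' pattern, and the two genuinely nontrivial inputs (the vanishing of Lemma \ref{lm-annul-prelim} and the embedding built from Corollary \ref{cor-caract}) are already available. The only verification that takes a moment is checking that if $\Omega^n F$ were left $p^r$-bounded then the resolution would extend; this follows from the fact that any surjection $Q \twoheadrightarrow Q'$ induces a surjection $\Head(Q) \twoheadrightarrow \Head(Q')$, so Lemma \ref{lm-elem} can be invoked to produce a $p^r$-bounded projective surjecting onto $\Omega^n F$ whenever $\Omega^n F$ itself is $p^r$-bounded.
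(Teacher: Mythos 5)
Your argument for $p(F,r)$ is correct and is essentially the paper's own proof: vanishing of $\hom$ from left $p^r$-bounded projectives into $G(d,r)$, dimension shifting along a resolution whose first $p(F,r)$ terms are $p^r$-bounded, and detection of a nonzero class in degree $p(F,r)$ via a non-$p^r$-restricted summand of the head of the syzygy (where the paper simply cites the inclusion $L(d,r)\subset T(d,r)$ established just before the statement, you re-derive the embedding $L_\mu\hookrightarrow T(d,r)$ from Steinberg and Clausen--James, which amounts to the same thing). The only point needing care is your reduction of the $i(F,r)$ case to the $p(F,r)$ case by Kuhn duality: since $F$ may have infinite-dimensional values, $F\not\cong F^{\sharp\sharp}$ in general, so neither $i(F,r)=p(F^\sharp,r)$ nor $\Ext^k_{\PP_\kk}(G(d,r),F)\cong\Ext^k_{\PP_\kk}(F^\sharp,G(d,r))$ is automatic; the paper sidesteps this by running the dual argument directly (injective resolutions, cosyzygies and socles), which is the safer route here.
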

\begin{proof}
Let $P$ be a degree $d$ homogeneous $p^r$-bounded projective. Then by theorem \ref{thm-1} $\Ext^*_{\PP_\kk}(P,G(d,r))$ is zero.
Take a projective resolution $Q$ of $F$ whose first $p(F,r)$-terms (i.e. up to index $p(F,r)-1$) are left $p^r$-bounded projectives, and let $K$ be the kernel of $Q_{p(F,r)-1}\to Q_{p(F,r)-2}$. By definition of $p(F,r)$, $K$ is not $p^r$-bounded. By corollary \ref{cor-bounded}, this means that there exists a nonzero map $K\to L(d,r)$, hence also a nonzero map $K\to T(d,r)$. By dimension shifting: 
$$\Ext^{i}_{\PP_\kk}(F,G(d,r))\simeq \begin{cases} 0 & \text{ if $i<p(F,r)$}\\
\hom_{\PP_\kk}(K,G(d,r))\ne 0 & \text{ if $i=p(F,r)$}
\end{cases}\;.$$
The proof for $i(F,r)$ is similar.
\end{proof}

Since $T(d,r)$ is a self dual functor, $\Ext^*_{\PP_\kk}(T(d,r),F^\sharp)$ is always isomorphic to $\Ext^*_{\PP_\kk}(F,T(d,r))$. Thus we obtain the following corollary.

\begin{corollary}
For all functors $F$, we have $i(F^\sharp,r)=p(F,r)$
\end{corollary}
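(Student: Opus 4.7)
My plan is to deduce the equality directly from Proposition~\ref{prop-crithigh} by exploiting the self-duality of the test functor $T(d,r)$ under Kuhn duality $^\sharp$. The strategy is:

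\textbf{Step 1.} Reduce to the homogeneous case. Since both $p(-,r)$ and $i(-,r)$ decompose over the homogeneous components of a functor (projective/injective resolutions can be built summand by summand in $\PP_{d,\kk}$), and since $^\sharp$ preserves homogeneous degree, we may assume $F$ is homogeneous of some degree $d$.

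\textbf{Step 2.} Invoke the characterization. Applying Proposition~\ref{prop-crithigh} with the choice $G(d,r)=T(d,r)$, we have
\[
p(F,r)=\min\{\,k\ge 0 \mid \Ext^k_{\PP_\kk}(F,T(d,r))\ne 0\,\},
\]
with $\min\emptyset = +\infty$, and similarly
\[
i(F^\sharp,r)=\min\{\,k\ge 0 \mid \Ext^k_{\PP_\kk}(T(d,r),F^\sharp)\ne 0\,\}.
\]

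\textbf{Step 3.} Use self-duality of $T(d,r)$. Each summand $T^{(d_0,\dots,d_k)}=\otimes^{d_0}\otimes(\otimes^{d_1})^{(1)}\otimes\cdots\otimes(\otimes^{d_k})^{(k)}$ is a tensor product of tensor powers, which are self-dual under $^\sharp$ (since $(\otimes^d)^\sharp \simeq \otimes^d$ and $^\sharp$ commutes with tensor products and Frobenius twists). Therefore $T(d,r)^\sharp\simeq T(d,r)$.

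\textbf{Step 4.} Apply Kuhn duality on $\Ext$. Kuhn duality $^\sharp$ is an exact contravariant self-equivalence of $\PP_{d,\kk}$, hence induces isomorphisms $\Ext^k_{\PP_\kk}(A,B)\simeq \Ext^k_{\PP_\kk}(B^\sharp,A^\sharp)$ for all $k$ and all $A,B\in\PP_{d,\kk}$. Specializing to $A=F$ and $B=T(d,r)$ and using $T(d,r)^\sharp\simeq T(d,r)$ from Step~3 yields
\[
\Ext^k_{\PP_\kk}(F,T(d,r))\;\simeq\;\Ext^k_{\PP_\kk}(T(d,r),F^\sharp)
\]
for every $k\ge 0$. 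Comparing the two characterizations in Step~2 gives $p(F,r)=i(F^\sharp,r)$, which is the claim.

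The only step requiring any thought is the self-duality of $T(d,r)$ in Step~3, but this is immediate from the way $^\sharp$ interacts with $\otimes$ and $(-)^{(r)}$ on tensor powers; the rest is a direct translation of the proposition through duality.
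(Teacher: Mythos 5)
Your proof is correct and is essentially the paper's own argument: the corollary is deduced from Proposition \ref{prop-crithigh} together with the self-duality of $T(d,r)$ and the resulting isomorphism $\Ext^*_{\PP_\kk}(F,T(d,r))\simeq \Ext^*_{\PP_\kk}(T(d,r),F^\sharp)$. The only difference is that you spell out the reduction to the homogeneous case and the duality isomorphism on $\Ext$, which the paper leaves implicit.
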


We now indicate how $i(F,r)$ behaves with respect to some usual operations on strict polynomial functors. There are similar statements for $p(F,r)$ which can be deduced from the formula $p(F,r)=i(F^\sharp, r)$ or by repeating the proofs with projective resolutions. We leave this to the reader.
\begin{proposition}\label{prop-op} Let $F$ and $G$ be two functors. The following holds:
\begin{align*}
(a)\quad&i(F_V,r)=i(F,r)\;,\\
(b)\quad&i(F,r)=i(F^{(s)},r+s)\;,\\
(c)\quad&i(F\otimes G,r)=\min\{\,i(F,r)\,,\,i(G,r)\,\}\;,\\
(d)\quad&i(F\oplus G,r)=\min\{\,i(F,r)\,,\,i(G,r)\,\}\;,\\
(e)\quad&i(F,r)\ge \min\{ i(S,r)\;:\; S\text{ is finite and } S\subset F\}\;.
\end{align*}
\end{proposition}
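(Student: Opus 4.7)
My overall strategy is to recast each statement via the Ext-characterization from Proposition~\ref{prop-crithigh}, namely $i(F,r) = \min\{k : \Ext^k_{\PP_\kk}(T(d,r), F) \neq 0\}$ with $d = \deg F$, and then to work with standard constructions on minimal injective resolutions and Ext groups. I would begin with the two easiest parts. Part (d) follows because the direct sum of minimal injective resolutions of $F$ and $G$ is an injective resolution of $F \oplus G$ whose $k$-th term is $p^r$-bounded iff both summands' $k$-th terms are. Part (e) follows by writing $F = \varinjlim F_\alpha$ as the filtered union of its finite subfunctors and observing that $T(d,r)$ is finite, hence finitely presented, so $\Ext^k_{\PP_\kk}(T(d,r), -)$ commutes with the filtered colimit.

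For (c), the inequality $i(F \otimes G, r) \geq \min\{i(F,r), i(G,r)\}$ is obtained from the total complex of the tensor product of minimal injective resolutions of $F$ and $G$: this is an injective resolution of $F \otimes G$ whose $k$-th term is $\bigoplus_{i+j=k} J_F^i \otimes J_G^j$, and for $k$ below the minimum every summand is a tensor product of basic $p^r$-bounded injectives, with the identity $S^\lambda \otimes S^\mu = S^{\lambda \cup \mu}$ preserving that property. For the reverse inequality, assume WLOG $i(F,r) \leq i(G,r)$, and pick a nonzero $\alpha \in \Ext^{i(F,r)}_{\PP_\kk}(T^{\vec a}, F)$ with $\vec a$ in the index set of $T(d_F, r)$, together with a nonzero $\beta \in \hom_{\PP_\kk}(T^{\vec b}, G)$—the latter exists since any simple $L_\mu$ in the socle of $G$ is a quotient of some $T^{\vec b}$ by Corollary~\ref{cor-caract}. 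By the injectivity part of Theorem~\ref{thm-1} (the Künneth condition holds as $T^{\vec a}$ and $T^{\vec b}$ are finite), the cup product $\alpha \cup \beta$ is a nonzero element of $\Ext^{i(F,r)}_{\PP_\kk}(T^{\vec a} \otimes T^{\vec b}, F \otimes G) = \Ext^{i(F,r)}_{\PP_\kk}(T^{\vec a + \vec b}, F \otimes G)$, and $T^{\vec a + \vec b}$ is a direct summand of $T(d_F + d_G, r)$ because $\vec a$'s positive entry at some index $\geq r$ persists in $\vec a + \vec b$.

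For (a), the functor $F \mapsto F_V$ is exact, and a Cauchy-type decomposition shows that $(S^\lambda)_V = S^\lambda(V \otimes -)$ is a sum of basic $p^r$-bounded injectives whenever $\lambda$ is $p^r$-bounded (explicitly, for $V = \kk^v$ one has $S^n(V \otimes W) = \bigoplus_{n_1+\dots+n_v=n} S^{(n_1,\dots,n_v)}(W)$ with each $n_j \leq n < p^r$); this yields $i(F_V, r) \geq i(F, r)$. Conversely, when $V$ is nonzero, splitting $V = \kk \oplus V'$ and using the bifunctor decomposition $F(W \oplus V' \otimes W) = \bigoplus_{a+b=\deg F} F^{(a,b)}(W, V' \otimes W)$ exhibits $F$ as the $(\deg F, 0)$-summand of $F_V$, so part (d) gives the reverse inequality.

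Part (b) is the main obstacle, because precomposition by $I^{(s)}$ preserves neither injectivity (for instance $I^{(1)} \subsetneq S^p$ is simple but not injective) nor minimal injective resolutions in any direct way. The argument relies on the higher-degree structure of $\Ext$-groups under precomposition by $I^{(s)}$ developed in \cite{TouzeClasses, Chalupnik15}, which implies that the lowest nonzero degree of $\Ext^*_{\PP_\kk}(T(d,r)^{(s)}, F^{(s)})$ coincides with that of $\Ext^*_{\PP_\kk}(T(d,r), F)$, namely $i(F, r)$. Since $T(d,r)^{(s)}$ is a direct summand of $T(p^s d, r+s)$, this already gives $i(F^{(s)}, r+s) \leq i(F, r)$. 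For the reverse inequality, one must show that the remaining summands $T^{(\vec e)}$ of $T(p^s d, r+s)$—those $\vec e$ with some $e_i > 0$ for an $i < s$—contribute zero Ext with $F^{(s)}$: via sum-diagonal adjunction, bidegree decomposition of $(F^{(s)})_\boxplus$, and Künneth, this reduces to showing $\Ext^*_{\PP_\kk}((\otimes^{e_i})^{(i)}, Y^{(s)}) = 0$ whenever $Y$ has positive degree and $i < s$. Writing $Y^{(s)} = (Y^{(s-i)})^{(i)}$ and applying the Chalupnik–Touze formula, this Ext reduces to $\Ext^*_{\PP_\kk}(\otimes^{e_i}, Y^{(s-i)})$, which vanishes in positive degrees by projectivity of $\otimes^{e_i}$ and in degree zero by the Schur-functor identity $\hom_{\PP_\kk}(\otimes^{e_i}, Y^{(s-i)}) = f_{e_i}(Y^{(s-i)}) = 0$, since the head of $Y^{(s-i)}$ contains no $p$-restricted simple when $Y$ has positive degree and $s > i$.
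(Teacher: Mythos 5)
Your proposal follows essentially the same route as the paper for all five parts: (d) directly from the definition, (e) from Proposition \ref{prop-crithigh} and commutation of $\Ext^*_{\PP_\kk}(T(d,r),-)$ with filtered colimits, (a) from exactness of parametrization together with the decomposition of $(S^\lambda)_V$ into basic $p^r$-bounded injectives and the splitting of $F$ off $F_V$, (c) from tensoring injective resolutions for one inequality and injectivity of cup products applied to $x\cup f$ with $f:T^{\vec b}\to G$ nonzero for the other, and (b) from the split inclusion $T(d,r)^{(s)}\subset T(p^sd,r+s)$ combined with the twisting formula of the cited references. Your treatment of the complementary summands $T^{(\vec e)}$ in (b) is in fact more detailed than the paper's.

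The one step you assert too quickly is the claim in (b) that the references ``imply that the lowest nonzero degree of $\Ext^*_{\PP_\kk}(T(d,r)^{(s)},F^{(s)})$ coincides with that of $\Ext^*_{\PP_\kk}(T(d,r),F)$.'' Preservation of the lowest nonzero $\Ext$-degree under Frobenius twist is \emph{not} a blanket consequence of the twisting formula: one only gets $\Ext^*(X^{(s)},F^{(s)})\simeq\Ext^*(X,F_{E_s})$ with $F_{E_s}$ a nonnegatively graded parametrized functor whose degree-zero piece is $F$, and the positive-degree pieces can create classes in total degrees where $\Ext^*(X,F)$ vanishes (already $\Ext^*_{\PP_\kk}(S^2,\Lambda^2)=0$ for $p>2$ while $\Ext^*_{\PP_\kk}(S^{2(1)},\Lambda^{2(1)})\neq 0$). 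For $X=T(d,r)$ the statement is nevertheless true, and the missing argument is precisely what you have already established: $F_{E_s}$ is ungradedly $F_{\kk^{p^s}}$, so each graded piece is a direct summand of $F_{\kk^{p^s}}$ and by (a) and (d) satisfies $i\bigl((F_{E_s})_j,r\bigr)\ge i(F_{\kk^{p^s}},r)=i(F,r)$, whence it contributes nothing in total degree $<i(F,r)$, while the degree-zero piece contributes exactly at $i(F,r)$. This is exactly how the paper closes the argument; with that sentence inserted your proof of (b), and hence of the whole proposition, is complete.
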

\begin{proof}
Statement $(d)$ is straightforward from the definition of $i(F,r)$. Statement $(d)$ implies that for the remaining statements, we can assume that $F$ and $G$ are homogeneous. We let $d:=\deg F$ and $g:=\deg G$. Statement $(e)$ follows from the interpretation of $i(F,r)$ given in proposition \ref{prop-crithigh} and the fact that $\Ext^*(T(d,r),-)$ commutes with directed colimits. To prove $(a)$, observe that $F$ is a direct summand in $F^V$ so that $i(F,r)\ge i(F_V,r)$. Moreover, if $J$ is a standard $p^r$-bounded injective then $J_V$ is a direct sum of standard $p^r$-bounded injectives. Hence if $Q$ is an injective resolution of $F$ whose first $i(F,r)$ terms are left $p^r$-bounded injectives, then $Q_V$ is an injective resolution of $F_V$ whose first $i(F,r)$ terms are left $p^r$-bounded injectives. So that $i(F_V,r)\ge i(F,r)$. To prove $(b)$, we use the isomorphisms 
$$\Ext^*_{\PP,\kk}(T(d,r+s),F^{(s)})\simeq \Ext^*_{\PP_\kk}(T(d,r)^{(s)},F^{(s)})\simeq \Ext^*_{\PP_\kk}(T(d,r),F_{E_s})\;.$$
The first isomorphism is induced by the inclusion $T(d,r)^{(s)}\subset T(d,r+s)$, the cokernel of this split inclusion is easily seen to be zero by using the sum diagonal adjunction. The second isomorphism is proved in \cite{TouzeClasses,Chalupnik15}.
In this formula $F_{E_s}$ is a nonnegatively graded functor, and the degree on the right hand side is the total degree. The graded functor $F_{E_s}$ equals $F_{\kk^{p^s}}$ in an ungraded way, so that the lowest nonzero degree $k$ on the right hand side of the isomorphism is greater or equal to $i(F_{\kk^{p^s}},r)=i(F,r)$. Conversely, the degree zero component of $F_{E_s}$ is isomorphic to $F$ so that the the lowest nonzero degree $k$ on the right hand side of the isomorphism is greater or equal to $i(F,r)$. It remains to prove $(c)$. Assume for example that $i(F,r)\le i(G,r)$. If $Q$ resp. $Q'$ are two injective resolutions of $F$, resp. $G$, whose first $i(F,r)$ terms are $p^r$-bounded, then $Q\otimes Q'$ is an injective resolution of $F\otimes G$ whose first $i(F,r)$ terms are $p^r$ bounded, hence $i(F\otimes G,r)\ge i(F,r)$. Conversely, let $x$ be a nonzero extension in $\Ext^{i(F,r)}(T(d,r),F)$. By corolalry \ref{cor-caracteris} we can find a tuple $(d_0,\dots,d_\ell)$ and a nonzero map $f:T^{(d_0,\dots,d_\ell)}\to G$. Since cup products are injective (by theorem \ref{thm-1} with $r=0$ or by lemma \ref{lm-cup}) $x\cup f$ is a nonzero element of $\Ext^{i(F,r)}(T(d,r)\otimes T^{(d_0,\dots,d_\ell)},F\otimes G)$. But $T(d,r)\otimes T^{(d_0,\dots,d_\ell)}$ is a direct summand of $T(d+g,r)$, so that $i(F\otimes G,r)\le i(F,r)$.
\end{proof}

\subsection{A few examples}\label{subsec-ex}

\begin{proposition}\label{prop-ex} Let $r$ be a nonnegative integer. The following holds.
\begin{enumerate}
\item If $\deg F< p^r$ then $i(F,r)=+\infty$.
\item If $d\ge p^r$, then $i(S^d,r)=0$.
\item If $d\ge p^r$, then $i(\Lambda^d,r)=p^r-1$.
\item If $d\ge p^r$, then $i(\Gamma^d,r)=2(p^r-1)$.
\end{enumerate}
\end{proposition}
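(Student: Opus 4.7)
The plan is to treat the four parts separately, using proposition \ref{prop-crithigh} to reduce (3) and (4) to concrete Ext calculations.

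Part (1) is essentially the observation already recorded in the remark following definition \ref{def-pi}: if $\deg F = d < p^r$, then every partition $\lambda$ with $|\lambda|=d$ satisfies $\lambda_k \le d < p^r$ for all $k$, so every basic injective $S^\lambda$ appearing in any injective coresolution of $F$ is $p^r$-bounded, whence $i(F,r) = +\infty$. For part (2), the functor $S^d = S^{(d)}$ is already a basic injective, so the question reduces to whether it is itself $p^r$-bounded. By lemma \ref{lm-elem} this is equivalent to asking whether $\Soc(S^d) = L_{(d)}$ is a direct sum of $p^r$-restricted simples; but the partition $(d)$ is $p^r$-restricted iff $d < p^r$, so $i(S^d,r) = 0$ whenever $d \ge p^r$.

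For parts (3) and (4), I would apply proposition \ref{prop-crithigh} and compute the lowest degree in which $\Ext^*_{\PP_\kk}(T(d,r), F)$ is nonzero, for $F = \Lambda^d$ and $F = \Gamma^d$. The approach is to iterate the sum-diagonal adjunction: first peel off the factor $\otimes^{d_0}$ of $T^{(d_0,\dots,d_k)}$, then peel off each remaining factor $(\otimes^{d_i})^{(i)} = (I^{(i)})^{\otimes d_i}$ one at a time, at each step using the exponential decomposition $F^d(V_0 \oplus \cdots \oplus V_k) = \bigoplus_{\sum a_i = d} \bigotimes_i F^{a_i}(V_i)$ (valid for $F \in \{\Lambda, \Gamma\}$) together with K\"unneth, which applies because all functors in sight have finite-dimensional values. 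For degree reasons only the summand with $a_i = p^i d_i$ survives at each step, and the outcome is an isomorphism
\[
\Ext^*_{\PP_\kk}\bigl(T^{(d_0,\dots,d_k)}, F^d\bigr) \;\simeq\; \bigotimes_{i=0}^{k} \Ext^*_{\PP_\kk}(I^{(i)}, F^{p^i})^{\otimes d_i}.
\]

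The remaining ingredient is the lowest nonzero Ext degree of the building blocks $\Ext^*_{\PP_\kk}(I^{(i)}, \Lambda^{p^i})$ and $\Ext^*_{\PP_\kk}(I^{(i)}, \Gamma^{p^i})$, which are classically $p^i - 1$ and $2(p^i - 1)$ respectively (both Ext groups being nonzero in those degrees). Given this, minimizing the resulting linear form $\alpha \sum_{i\ge 1} (p^i-1) d_i$ (with $\alpha = 1$ for $\Lambda$ and $\alpha = 2$ for $\Gamma$) over tuples $(d_0,\dots,d_k)$ satisfying $\sum p^i d_i = d$ with at least one $d_i \ne 0$ for $i \ge r$ is routine: every unit of $d_j$ with $j \ge 1$ contributes $\alpha(p^j-1)$ to the objective and $p^j$ to the constraint, so the minimum is achieved by putting a single unit at the smallest admissible index (namely $j = r$, contributing $\alpha(p^r-1)$) and absorbing the remainder in $d_0$. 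The main obstacle is to have the building-block Ext computations cleanly available in the stated form; once they are, proposition \ref{prop-crithigh} yields the claimed values $i(\Lambda^d,r) = p^r-1$ and $i(\Gamma^d,r) = 2(p^r-1)$.
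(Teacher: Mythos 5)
Your proof is correct, and while part (1) coincides with the paper's, parts (2)--(4) follow a genuinely different route. For (2) you pass through $\Soc(S^d)=L_{(d)}$, lemma \ref{lm-elem} and the characterization of right $p^r$-bounded simples from proposition \ref{prop-eq}; the paper instead exhibits the nonzero map $\otimes^{d-p^r}\otimes I^{(r)}\to S^d$ induced by multiplication and the inclusion $I^{(r)}\hookrightarrow S^{p^r}$, and invokes proposition \ref{prop-crithigh}. Both work; the paper's version avoids appealing to the simplicity of the socle of the costandard object $S^d$, while yours is a one-line consequence of the structural lemmas. For (3) and (4) your upper bounds are essentially the paper's (the summand $\otimes^{d-p^r}\otimes I^{(r)}$ of $T(d,r)$ together with the classical concentration of $\Ext^*_{\PP_\kk}(I^{(r)},\Lambda^{p^r})$ in degree $p^r-1$ and of $\Ext^*_{\PP_\kk}(I^{(r)},\Gamma^{p^r})$ in degree $2(p^r-1)$), but the lower bounds differ: the paper produces explicit injective resolutions — the degree-$d$ parts of the reduced bar construction of $S$ and of its twofold iteration — whose first $p^r-1$, resp. $2(p^r-1)$, terms are basic $p^r$-bounded injectives, whereas you obtain the vanishing of $\Ext^{<k}_{\PP_\kk}(T(d,r),F)$ from a complete computation of $\Ext^*_{\PP_\kk}(T^{(d_0,\dots,d_k)},F)$ for every admissible tuple via the exponential/K\"unneth decomposition, followed by minimizing $\sum_i d_i(p^i-1)$ under the constraint defining $T(d,r)$. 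Your route is more uniform (one computation delivers both inequalities, and it visibly uses proposition \ref{prop-crithigh} in both directions), at the price of needing the full concentration statements for every twist $i\ge 1$, not just $i=r$; the bar-resolution argument buys a lower bound that requires no complete Ext computation and exhibits the witnessing resolutions explicitly.
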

\begin{proof}
The first statement follows from the fact that when $d<p^r$, all basic injectives of degree $d$ are $p^r$-bounded. If $d\ge p^r$ the multiplication of the symmetric algebra and the natural inclusion $I^{(r)}\hookrightarrow S^{p^r}$ induce a nonzero map $\otimes^{d-p^r}\otimes I^{(r)}\to S^{d}$. Hence by proposition \ref{prop-crithigh} $i(S^d,r)=0$. Let us prove that $i(\Lambda^d,p^r)=p^r-1$. The homogeneous part of degree $d$ of the reduced bar construction of the symmetric algebra $S$ provides an injective resolution of $\Lambda^d$ whose first $p^r-1$ terms are basic $p^r$-bounded injectives (see e.g \cite{Totaro}). Thus $i(\Lambda^d,r)\ge p^r-1$. Conversely, using sum-diagonal adjunction one obtains that 
$\Ext^*_{\PP_\kk}(\otimes^{d-p^r}\otimes I^{(r)},\Lambda^d)$ is isomorphic to the tensor product 
$$\Ext^*_{\PP_\kk}(\otimes^{d-p^r},\Lambda^{d-p^r})\otimes \Ext^*_{\PP_\kk}(I^{(r)},\Lambda^{p^r})\;.$$
The factor on the left hand side of the tensor product is one dimensional and concentrated in degree zero, and by \cite[(4.5.1) p. 251]{FS}, the factor on right hand side of the tensor product is one dimensional and concentrated in degree $p^r-1$. Now $\otimes^{d-p^r}\otimes I^{(r)}$ is a direct summand in $T(d,r)$ so that $i(\Lambda^d,r)\le p^r-1$ by proposition \ref{prop-crithigh}. A similar argument gives the case of $\Gamma^d$: the homogeneous part of degree $d$ of twofold reduced bar construction of the symmetric algebra yields anjective resolution of first $2(p^r-1)$ terms are basic $p^r$-bounded injectives, and on the other hand one can compute that $\Ext^*_{\PP_\kk}(\otimes^{d-p^r}\otimes I^{(r)},\Gamma^d)$ is one dimensional and concentrated in degree $2(p^r-1)$.
\end{proof}

Let us denote by $S_\lambda$ the Schur functor associated to a partition $\lambda$ and by $W_\lambda$ the Weyl functor associated to $\lambda$. These are finite homogeneous strict polynomial functors, whose degree is the weight of the partition $\lambda$, and we have $W_\lambda=S_\lambda^\sharp$. They generalize the functors $S^d$, $\Lambda^d$ and $\Gamma^d$, indeed:
$$W_{(d,0,0,\dots)}=\Gamma^d,\quad S_{(d,0,0,\dots)}=S^d,\quad S_{(1,\dots,1)}=W_{(1,\dots,1)}=\Lambda^d\;.$$
The $S_\lambda$ (resp. $W_\lambda$) are the costandard (resp. standard) objects of the highest weight category structure of $\PP_\kk$. We refer the reader to \cite[section 6.1.1]{TouzeRingel}, or \cite{Krause2} for more details and references on these functors. The following lemma may be useful for computations.
\begin{lemma}\label{lm-ptitlm}Let $\lambda$ be a partition and let $\lambda'$ be the dual partition. for all tuples $(d_0,\dots,d_k)$ there is a graded isomorphism (where $\Ext$ are understood as zero in negative degrees):
$$\Ext^{*}_{\PP_\kk}(T^{(d_0,\dots,d_k)},S_\lambda)\simeq \Ext^{*+s}_{\PP_\kk}(T^{(d_0,\dots,d_k)},W_{\lambda'})\,$$
where $s=d_1(p-1)+d_2(p^2-1)+\dots +d_k(p^k-1)$.
\end{lemma}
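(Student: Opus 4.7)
The plan is to translate both sides into Ext computations in the multi-variable category $\PP_\kk(k+1)$ via iterated sum-diagonal adjunction, expand via the Cauchy decomposition, and match the two expansions using the conjugation symmetry of Littlewood-Richardson coefficients.

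First, the sum-diagonal isomorphism of section \ref{subsec-cross} (iterated $k+1$ times) gives
$$\Ext^*_{\PP_\kk}(T^{(d_0,\dots,d_k)}, F) \simeq \Ext^*_{\PP_\kk(k+1)}\bigl(\otimes^{d_0} \boxtimes (\otimes^{d_1})^{(1)} \boxtimes \cdots \boxtimes (\otimes^{d_k})^{(k)},\, F_\boxplus\bigr),$$
where $F_\boxplus$ is the $(k+1)$-variable bifunctor $(V_0,\dots,V_k) \mapsto F(V_0 \oplus \cdots \oplus V_k)$. The iterated Cauchy formula expands $(S_\lambda)_\boxplus$ (resp. $(W_{\lambda'})_\boxplus$) as a direct sum over tuples of partitions $(\mu^{(0)},\dots,\mu^{(k)})$ of iterated external tensor products $S_{\mu^{(0)}} \boxtimes \cdots \boxtimes S_{\mu^{(k)}}$ (resp. with $W$'s), weighted by the same iterated Littlewood-Richardson coefficients $c^{\lambda}_{\mu^{(0)},\dots,\mu^{(k)}}$ and $c^{\lambda'}_{(\mu^{(0)})',\dots,(\mu^{(k)})'}$ respectively. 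The K\"unneth formula applies (since each $\otimes^{d_i}$ has finite-dimensional values) and splits each piece into $\bigotimes_{i=0}^{k} \Ext^*_{\PP_\kk}((\otimes^{d_i})^{(i)}, S_{\mu^{(i)}})$, and analogously for $W_{(\mu^{(i)})'}$.

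By the classical conjugation symmetry $c^{\lambda}_{\mu^{(0)},\dots,\mu^{(k)}} = c^{\lambda'}_{(\mu^{(0)})',\dots,(\mu^{(k)})'}$, the two Cauchy decompositions are bijected slot-wise via $\mu^{(i)} \leftrightarrow (\mu^{(i)})'$. The lemma therefore reduces to a single-slot statement: for each $i \geq 0$ and each partition $\mu$ of $d_i p^i$,
$$\Ext^*_{\PP_\kk}\bigl((\otimes^{d_i})^{(i)}, S_{\mu}\bigr) \simeq \Ext^{*+d_i(p^i-1)}_{\PP_\kk}\bigl((\otimes^{d_i})^{(i)}, W_{\mu'}\bigr).$$
The case $i=0$ is immediate: $\otimes^{d_0}$ is projective, both sides are concentrated in cohomological degree zero, and their dimensions agree (number of standard Young tableaux of shape $\mu$, which equals that of $\mu'$). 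For $i \geq 1$, additivity of $I^{(i)}$ as a strict polynomial functor allows a further sum-diagonal adjunction and K\"unneth reduction applied to $(I^{(i)})^{\otimes d_i}$, which reduces the claim further to the single Frobenius twist case $d_i = 1$:
$$\Ext^*_{\PP_\kk}(I^{(i)}, S_\nu) \simeq \Ext^{*+p^i-1}_{\PP_\kk}(I^{(i)}, W_{\nu'}) \qquad (\nu \vdash p^i).$$

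The main obstacle is this core claim. I would attack it by resolving $S_\nu$ and $W_{\nu'}$ by direct sums of products $S^{n_1} \otimes \cdots \otimes S^{n_m}$, for example via the reduced bar construction of the symmetric algebra or the Akin-Buchsbaum-Weyman coresolutions. Additivity of $I^{(i)}$ combined with K\"unneth forces $\Ext^*_{\PP_\kk}(I^{(i)}, S^{n_1} \otimes \cdots \otimes S^{n_m})$ to vanish unless $m = 1$ and $n_1 = p^i$, in which case it is $\kk$ in cohomological degree $0$ (since $S^{p^i}$ is injective). Identifying the positions where the $S^{p^i}$-summands occur in each coresolution and comparing them should produce exactly the expected shift of $p^i - 1$, consistent with the extremal computations $\Ext^*(I^{(i)}, S^{p^i}) = \kk$ in degree $0$ and $\Ext^*(I^{(i)}, \Lambda^{p^i}) = \kk$ in degree $p^i - 1$ (the latter via \cite[(4.5.1)]{FS}, as already invoked in the proof of proposition \ref{prop-ex}).
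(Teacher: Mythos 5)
Your reduction is clean and, as far as it goes, correct: iterated sum-diagonal adjunction, the Littlewood--Richardson decomposition of $(S_\lambda)_\boxplus$ and $(W_{\lambda'})_\boxplus$, the K\"unneth isomorphism, and the conjugation symmetry $c^{\lambda}_{\mu^{(0)},\dots,\mu^{(k)}}=c^{\lambda'}_{(\mu^{(0)})',\dots,(\mu^{(k)})'}$ do reduce the lemma to the single-twist statement $\Ext^*_{\PP_\kk}(I^{(i)},S_\nu)\simeq \Ext^{*+p^i-1}_{\PP_\kk}(I^{(i)},W_{\nu'})$ for $\nu\vdash p^i$. But this is exactly where the argument stops being a proof. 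That statement is the essential content of the lemma (it is the lemma for the tuple $(0,\dots,0,1)$), and your treatment of it is heuristic. Two concrete problems. First, after applying $\hom_{\PP_\kk}(I^{(i)},-)$ to a coresolution of $S_\nu$ by products of symmetric powers, you obtain a complex whose terms are the multiplicity spaces of the $S^{p^i}$-summands, but its differentials are induced by the components $S^{p^i}\to S^{p^i}$ of the coresolution and have no reason to vanish; ``identifying the positions'' of these summands does not compute the cohomology. Second, even granting explicit coresolutions of both $S_\nu$ and $W_{\nu'}$ of this type --- which you would also have to produce: the ABW complexes as cited give a copresentation rather than a full coresolution, the bar construction only directly handles $\Lambda^d$ and $\Gamma^d$, and for Weyl functors the natural explicit resolutions are projective ones by divided powers --- you give no mechanism forcing the two resulting complexes to have isomorphic cohomology up to a shift of exactly $p^i-1$. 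The sentence ``should produce exactly the expected shift'' is precisely the assertion to be proved.

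For comparison, the paper disposes of the whole lemma with Ringel duality: $\Theta$ is an autoequivalence of the derived category of $\PP_{d,\kk}$ satisfying $\Theta S_\lambda=W_{\lambda'}$ and $\Theta T^{(d_0,\dots,d_k)}=T^{(d_0,\dots,d_k)}[-s]$ (both quoted from the literature), whence the shift. Your core claim is the combination $\Theta S_\nu=W_{\nu'}$ and $\Theta I^{(i)}\simeq I^{(i)}[-(p^i-1)]$ in disguise, so to complete your route you would either have to invoke the same input, or substitute the known explicit computations of $\Ext^*_{\PP_\kk}(I^{(i)},S_\nu)$ and $\Ext^*_{\PP_\kk}(I^{(i)},W_{\mu})$ from the literature; as written, neither is done.
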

\begin{proof}
We use Ringel duality $\Theta$, which is an autoequivalence of $\DD(\PP_{d,\kk})$.
See e.g. \cite[Section 3]{TouzeRingel}, \cite[Section 2]{Chalupnik2}. We have $\Theta S_\lambda=W_{\lambda'}$ and $\Theta T^{(d_0,\dots,d_k)}=T^{(d_0,\dots,d_k)}[-s]$, so that the lemma follows from interpreting morphisms of degree $s$ in the derived category as extensions of degree $s$.
\end{proof}

\begin{proposition}
Let $\lambda$ be a partition, let $\lambda'$ be the dual partition. Then we have: 
$$ i(S_\lambda,r)+p^r-1\le i(W_{\lambda'},r)\;.$$
Assume moreover that $\lambda=\lambda^0+p\lambda^1+\dots +p^k\lambda^k$ where each $\lambda^k$ is a $p$-restricted partition of $d_k$, and $k\ge r$. Then we have:
$$i(W_{\lambda'},r)\le \sum_{i=1}^k d_i (p^i-1)\;.$$
\end{proposition}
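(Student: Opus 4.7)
The plan is to apply the criterion of Proposition \ref{prop-crithigh} with $G(d,r)=T(d,r)$, using that $i(F,r)$ is the lowest degree where $\Ext^*_{\PP_\kk}(T(d,r),F)\ne 0$, together with the Ringel-duality shift of Lemma \ref{lm-ptitlm}. Write $T(d,r)=\bigoplus T^{\mathbf{d}}$ with $\mathbf{d}=(d_0,\ldots,d_k)$ satisfying $\sum p^i d_i=d$ and $\sum_{i<r}p^i d_i<d$. The second condition forces some $d_i$ with $i\ge r$ to be strictly positive, so the shift $s(\mathbf{d}):=\sum_{i\ge 1} d_i(p^i-1)$ appearing in Lemma \ref{lm-ptitlm} satisfies $s(\mathbf{d})\ge p^r-1$ for every tuple contributing to $T(d,r)$.

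For the first inequality, set $j_0(\mathbf{d},F):=\min\{j:\Ext^j_{\PP_\kk}(T^{\mathbf{d}},F)\ne 0\}$. By Proposition \ref{prop-crithigh}, $i(F,r)=\min_{\mathbf{d}}j_0(\mathbf{d},F)$, and by Lemma \ref{lm-ptitlm} one has $j_0(\mathbf{d},W_{\lambda'})=j_0(\mathbf{d},S_\lambda)+s(\mathbf{d})$. Hence
$$i(W_{\lambda'},r)=\min_{\mathbf{d}}\bigl(j_0(\mathbf{d},S_\lambda)+s(\mathbf{d})\bigr)\ge \min_{\mathbf{d}}j_0(\mathbf{d},S_\lambda)+(p^r-1)=i(S_\lambda,r)+(p^r-1),$$
where the inequality uses that $s(\mathbf{d})\ge p^r-1$ holds uniformly in $\mathbf{d}$.

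For the second inequality, the strategy is to exhibit a single tuple achieving the bound. Take $\mathbf{d}:=(|\lambda^0|,\ldots,|\lambda^k|)$. This tuple belongs to $T(d,r)$: indeed $\sum p^i|\lambda^i|=|\lambda|=d$, while the hypothesis that $\lambda^k$ is nonzero together with $k\ge r$ ensures $\sum_{i<r}p^i|\lambda^i|<d$. Using Clausen and James' theorem and the Steinberg tensor product theorem (as recalled in section \ref{sec-eq-def}), one builds a surjection
$$T^{\mathbf{d}}=\bigotimes_{i=0}^k(\otimes^{|\lambda^i|})^{(i)}\twoheadrightarrow \bigotimes_{i=0}^k L_{\lambda^i}^{(i)}\simeq L_\lambda;$$
composing with the socle inclusion $L_\lambda\hookrightarrow S_\lambda$ (valid since $S_\lambda$ is the costandard object of highest weight $\lambda$ in the highest weight category $\PP_\kk$) yields a nonzero morphism, so $j_0(\mathbf{d},S_\lambda)=0$. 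Lemma \ref{lm-ptitlm} then provides a nonzero class in $\Ext^{s(\mathbf{d})}_{\PP_\kk}(T^{\mathbf{d}},W_{\lambda'})$, and Proposition \ref{prop-crithigh} gives $i(W_{\lambda'},r)\le s(\mathbf{d})=\sum_{i=1}^k d_i(p^i-1)$.

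The only delicate points are the bookkeeping verification that the chosen tuple really lies in $T(d,r)$ and the recollection that $L_\lambda$ is the simple socle of the costandard $S_\lambda$. The substantive work has already been done in Lemma \ref{lm-ptitlm}: once the Ringel-duality identification is available, both inequalities reduce to keeping track of the shift $s(\mathbf{d})$.
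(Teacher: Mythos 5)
Your proof is correct and follows essentially the same route as the paper: both inequalities are reduced, via Proposition \ref{prop-crithigh}, to tracking the Ringel-duality shift $s(\mathbf{d})$ of Lemma \ref{lm-ptitlm}, with the lower bound coming from $s(\mathbf{d})\ge p^r-1$ on every summand of $T(d,r)$ and the upper bound from the nonzero map $T^{(|\lambda^0|,\dots,|\lambda^k|)}\twoheadrightarrow L_\lambda\hookrightarrow S_\lambda$ supplied by Clausen--James and Steinberg. You merely spell out the bookkeeping (membership of the chosen tuple in $T(d,r)$ and the socle inclusion $L_\lambda\subset S_\lambda$) that the paper leaves implicit by citing Corollary \ref{cor-caract}.
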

\begin{proof}
We use the isomorphism of lemma \ref{lm-ptitlm}. If $T^{(d_0,\dots,d_r)}$ is a direct summand of $T(d,r)$, then the associated shift $s$ is always greater or equal to $p^r-1$. This proves the first inequality. Now corollary \ref{cor-caract} implies that 
$\hom_{\PP_\kk}(T^{(d_0,\dots,d_k)}, S_\lambda)\ne 0$. Hence there is a nonzero extension of degree $\sum_{i=1}^k d_i (p^i-1)$ between $\otimes^{d-ep^r}\otimes (\otimes^{e})^{(r)}$ and $W_{\lambda'}$, which proves the second inequality.
\end{proof}

We finish this section by computating the integers $i(F,r)$ when $F$ is any Schur or Weyl functor of degree $4$ in characteristic $p=2$. The result is already known for $S^4$, $\Lambda^4$ and $\Gamma^4$ by proposition \ref{prop-ex}. For the three remaining partitions, the computation relies on the following short exact sequences.
\begin{lemma}\label{lm-ses} Let $\kk$ be a field of characteristic $p=2$.
There are short exact sequences:
\begin{align}
& 0\to \Lambda^4\to \Lambda^3\otimes\Lambda^1\to S_{(2,1,1)}\to 0\\
& 0\to \to S_{(3,1)}\to S^3\otimes S^1\to S^4\to 0\\
& 0\to S_{(2,2)}\to S^2\otimes S^2\to S_{(3,1)}\;\oplus\; S^4\to 0
\end{align}
\end{lemma}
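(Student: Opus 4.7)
My plan is to establish each of the three short exact sequences separately, using explicit maps together with standard structural results on Schur (costandard) and Weyl (standard) functors.

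Sequences (1) and (2) follow directly from the Donkin--Mathieu theorem that tensor products of costandard functors admit good ($\nabla$-)filtrations, combined with the characteristic-free Littlewood--Richardson computation of their factors. For (1), the $\nabla$-factors of $\Lambda^3\otimes\Lambda^1=S_{(1^3)}\otimes S_{(1)}$ are $\Lambda^4$ and $S_{(2,1,1)}$, each with multiplicity one; since $(1^4)$ is minimal in the dominance order, $\Lambda^4$ embeds as a submodule, necessarily via the exterior comultiplication (the unique embedding up to scalar), and the quotient is $S_{(2,1,1)}$. Sequence (2) is proved identically: $S^3\otimes S^1$ has good filtration with factors $S^4$ and $S_{(3,1)}$; since $(3,1)<(4)$, $S_{(3,1)}$ is the submodule and the quotient $S^4$ is recovered via the multiplication map.

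Sequence (3) is more subtle, because the quotient $(S^2\otimes S^2)/S_{(2,2)}$ has two $\nabla$-factors $S_{(3,1)}$ and $S^4$ which do not split a priori---indeed $\Ext^1_{\PP_\kk}(S^4,S_{(3,1)})$ turns out to be nonzero in characteristic two. I would therefore construct an explicit map. Define the bring-down $\alpha\colon S^2\otimes S^2\to S^3\otimes S^1$ by $\alpha(ab\otimes cd)=abc\otimes d+abd\otimes c$. In characteristic two the composite $\mu\circ\alpha$ vanishes (each monomial $abcd$ appears twice), so $\alpha$ factors through $S_{(3,1)}=\ker(\mu)$ by sequence (2). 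Combined with the multiplication $\mu\colon S^2\otimes S^2\to S^4$, this yields
\[
f=(\alpha,\mu)\colon S^2\otimes S^2\to S_{(3,1)}\oplus S^4.
\]

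To conclude I must show $f$ is surjective with kernel $S_{(2,2)}$. The restriction of $f$ to the submodule $S_{(2,2)}\subset S^2\otimes S^2$ vanishes, since $\hom_{\PP_\kk}(S_{(2,2)},S_{(3,1)})=\hom_{\PP_\kk}(S_{(2,2)},S^4)=0$ by the standard vanishing $\hom(\nabla(\lambda),\nabla(\mu))=0$ when $\mu\not\leq\lambda$ (here both $(3,1)$ and $(4)$ are strictly greater than $(2,2)$ in dominance). Hence $f$ descends to a map $\bar f\colon (S^2\otimes S^2)/S_{(2,2)}\to S_{(3,1)}\oplus S^4$ between modules of equal dimension. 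The main remaining obstacle is showing $\bar f$ is an isomorphism; by the additivity of good-filtration multiplicities this reduces to checking that the induced maps on $\hom_{\PP_\kk}(W_\lambda,-)$ are nonzero for $\lambda\in\{(3,1),(4)\}$, equivalently that each coordinate of $f$ is nonzero. The latter is straightforward, for instance by evaluating on $\kk^2$ and verifying that $f|_{\kk^2}$ is surjective with one-dimensional kernel matching $S_{(2,2)}(\kk^2)$.
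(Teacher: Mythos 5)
Your treatment of the first two sequences (good filtrations plus Pieri, with the minimal dominance weight at the bottom) is a valid, if heavier, substitute for the paper's route, which simply cites the Akin--Buchsbaum--Weyman standard (co)presentations of $S_{(2,1,1)}$ and $S_{(3,1)}$. For the third sequence your map $\alpha$ is exactly the paper's $\phi=(\mathrm{mult}\otimes S^1)\circ(S^2\otimes\Delta)$, and the observation that $\mathrm{mult}\circ\alpha=0$ in characteristic $2$ is the same as the paper's. The difference is in how exactness is established, and this is where your argument has a genuine gap. The paper quotes from \cite{ABW} that $S_{(2,2)}$ is \emph{exactly} the kernel of $(\phi,\mathrm{mult})$, so the induced map $\bar f\colon (S^2\otimes S^2)/S_{(2,2)}\to S_{(3,1)}\oplus S^4$ is injective for free and only surjectivity (a dimension count against the characteristic-zero Pieri decomposition) remains. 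You only know the containment $S_{(2,2)}\subseteq\ker f$ (via $\hom$-vanishing), so you must prove injectivity of $\bar f$ yourself, and your proposed criteria for this do not work.

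Concretely: it is true that $\bar f$ is an isomorphism if the induced maps on $\hom_{\PP_\kk}(W_\lambda,-)$ are nonzero for $\lambda\in\{(3,1),(4)\}$, but this is \emph{not} equivalent to both coordinates of $f$ being nonzero. The point is that $\hom_{\PP_\kk}(S^4,S_{(3,1)})\neq 0$ in characteristic $2$: the comultiplication $\Delta\colon S^4\to S^3\otimes S^1$ satisfies $\mathrm{mult}\circ\Delta=4\cdot\mathrm{id}=0$, hence lands in $S_{(3,1)}$. Consequently a nonzero $\bar\alpha\colon Q\to S_{(3,1)}$ (where $Q=(S^2\otimes S^2)/S_{(2,2)}$) could a priori kill the submodule $\nabla(3,1)\subset Q$ and factor through $Q/\nabla(3,1)\simeq S^4\xrightarrow{\Delta}S_{(3,1)}$; in that case $\ker\bar f=\nabla(3,1)\neq 0$ even though both coordinates are nonzero. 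What you actually need to check is that $\alpha$ is nonzero on the sub-$\nabla(3,1)$ of $S^2\otimes S^2$ sitting above $S_{(2,2)}$, i.e.\ that the composite $\hom(W_{(3,1)},Q)\to\hom(W_{(3,1)},S_{(3,1)})$ is nonzero, which is a strictly stronger statement. Your fallback verification is also insufficient: evaluation on $\kk^2$ is not faithful for homogeneous functors of degree $4$ (for instance $S_{(2,1,1)}(\kk^2)=0$ and $\Lambda^4(\kk^2)=0$), so surjectivity of $f|_{\kk^2}$ does not imply surjectivity of $f$; you would have to evaluate on $\kk^n$ with $n\ge 4$, or else import the exactness of the ABW copresentation as the paper does.
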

\begin{proof}
The first two sequences are the standard presentation and copresentation of Schur functors and are valid over an arbitrary ring \cite{ABW}, only the last one is specific to the characteristic two case and needs to be proved. As proved in \cite{ABW}, the Schur functor $S_{(2,2)}$ has copresentation given by:
$$0\to S_{(2,2)}\to S^2\otimes S^2\xrightarrow[]{(\phi,\mathrm{mult})}S^3\otimes S^1\;\oplus \;S^4\qquad (*)$$
where $\mathrm{mult}$ denotes the map induced by the multiplication for the symmetric algebra and $\phi$ is defined as the following composite map, where $\Delta$ is induced by the comultiplication of the symmetric algebra.
$$S^2\otimes S^2\xrightarrow[]{S^2\otimes \Delta} S^2\otimes \otimes^2\xrightarrow[]{\mathrm{mult}\otimes S^1} S^3\otimes S^1\;.$$
Since the field has characteristic $2$, there is a surjective map $\pi:S^2\to \Lambda^2$, and $\phi$ factors in a unique way as $\phi=\psi\circ (S^2\otimes\pi)$. Now the composite $\mathrm{mult}\circ\psi: S^2\otimes \Lambda^2\to S^4$ is zero, so that the image of $\psi$ is contained into $S_{(3,1)}$. Thus the copresentation $(*)$ induces a copresentation:
$$0\to S_{(2,2)}\to S^2\otimes S^2\to S_{(3,1)}\;\oplus\; S^4\;.$$
Then the last map on the right is surjective for Euler characteristic reasons (the dimensions being independent of the characteristic, one can do the computation in characteristic zero, where $S^2\otimes S^2$ is isomorphic to $S_{(2,2)}\oplus S_{(3,1)}\oplus S^4$ by the Pieri rule).
\end{proof}

The extension groups between $\otimes^2\otimes I^{(1)}$, $I^{(1)}\otimes I^{(1)}$ or $I^{(2)}$ on the one hand, and tensor products of symmetric or exterior powers on the other hand are easy to compute. Now one can completely compute the extension groups between $T(4,r)$ and the Schur functors simply by inspecting the (not very) long exact $\Ext^*_{\PP_\kk}(T(d,r),-)$-sequences associated to the short exact sequences of lemma \ref{lm-ses}. And one can then obtain the corresponding computations with Weyl functors by lemma \ref{lm-ptitlm}. We record the resulting computations of $i(F,r)$ in the following proposition. Since $p^3=2^3>4=d$, only the cases $r=1$ and $r=2$ are interesting.
\begin{proposition}
Let $\kk$ be a field of characteristic $2$. The following computations hold.
$$\begin{array}{c|cccccccccc}
F &\Gamma^4 & W_{(3,1)} & W_{(2,2)} & W_{(2,1,1)} & \Lambda^4 & S_{(2,1,1)} & S_{(2,2)} & S_{(3,1)} & S^4\\\hline
i(F,1) & 2 & 2 & 2 & 1 & 1 & 1 & 0 & 0 & 0 \\
i(F,2) & 6 & 5 & 4 & 4 & 2 & 2 & 1 & 1 & 0
\end{array}
$$
\end{proposition}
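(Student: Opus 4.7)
The plan is to reformulate via proposition \ref{prop-crithigh}: $i(F, r)$ equals the smallest degree $k$ with $\Ext^k_{\PP_\kk}(T(4, r), F) \ne 0$. A direct enumeration of tuples $(d_0, d_1, \dots)$ satisfying $\sum_i p^i d_i = 4$ and $\sum_{i<r} p^i d_i < 4$ shows that
\[
T(4,2) = I^{(2)}, \qquad T(4,1) = I^{(2)} \oplus (\otimes^2)^{(1)} \oplus \bigl(\otimes^2 \otimes I^{(1)}\bigr).
\]
The three entries $F = S^4, \Lambda^4, \Gamma^4$ are dispatched at once by proposition \ref{prop-ex}, so the work concentrates on the three Schur functors $S_{(3,1)}, S_{(2,2)}, S_{(2,1,1)}$ and their Weyl-functor duals.

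For each of these Schur functors I apply $\Ext^*_{\PP_\kk}(T^{\mathrm{tuple}}, -)$, one tuple at a time, to the appropriate short exact sequence from lemma \ref{lm-ses}: sequence (2) for $S_{(3,1)}$, then sequence (3) for $S_{(2,2)}$ using the result just obtained, then sequence (1) for $S_{(2,1,1)}$. The non-target terms in each sequence are either a corner functor $S^4$ or $\Lambda^4$ (already known) or a tensor product of symmetric or exterior powers, namely $S^3\otimes S^1$, $S^2\otimes S^2$, or $\Lambda^3 \otimes \Lambda^1$. The Ext groups into those tensor-product targets are computed by sum-diagonal adjunction followed by Künneth: writing out the bifunctor adjoint and expanding $(S^a\otimes S^b)_{\boxplus}$ as a sum of separable bifunctors of the form $(S^i \boxtimes S^{a-i})\otimes (S^{i'}\boxtimes S^{b-i'})$, and analogously for exterior powers, one reduces to tensor products of elementary Ext groups $\Ext^*_{\PP_\kk}(I^{(s)}, S^{p^s})$, $\Ext^*_{\PP_\kk}(I^{(s)}, \Lambda^{p^s})$, $\Ext^*_{\PP_\kk}(\otimes^a, S^a)$, and $\Ext^*_{\PP_\kk}(\otimes^a, \Lambda^a)$, all explicitly computed in \cite{FS}. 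Reading off the lowest nonzero degree in each long exact sequence yields $i(S_\lambda, r)$ and, more precisely, the lowest nonzero degree of $\Ext^*_{\PP_\kk}(T^{\mathrm{tuple}}, S_\lambda)$ for each individual tuple.

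To pass from Schur to Weyl functors $W_{(3,1)}, W_{(2,2)}, W_{(2,1,1)}$, I invoke lemma \ref{lm-ptitlm} tuple by tuple. With $p = 2$, the Ringel shift $s = d_1 + 3 d_2 + \cdots$ takes the values $1, 2, 3$ on the tuples $(2,1), (0,2), (0,0,1)$ respectively (only the last is relevant for $r=2$). Hence $i(W_{\lambda'}, r)$ equals the minimum over the tuples composing $T(4, r)$ of $s_{\mathrm{tuple}}$ added to the lowest nonzero degree of $\Ext^*_{\PP_\kk}(T^{\mathrm{tuple}}, S_\lambda)$, which is already known from the previous step.

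The main obstacle is the bookkeeping in the long exact sequences: a priori, connecting homomorphisms could either shift or cancel a lowest-degree class, so for each tuple in $T(4, r)$ one must track the Ext groups over a range of degrees up to roughly $2(p^r - 1)$. Fortunately, in that range the relevant Ext groups are low-dimensional (often one-dimensional) and concentrated in isolated degrees, so dimension counts together with explicit identification of the maps in lemma \ref{lm-ses} — which are given by canonical multiplication or comultiplication maps on the symmetric algebra — suffice to pin down each connecting homomorphism; the table of values then follows by direct comparison.
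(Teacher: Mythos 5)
Your proposal follows the paper's proof essentially verbatim: the paper likewise reduces everything to the lowest nonvanishing degree of $\Ext^*_{\PP_\kk}(T(4,r),-)$ via proposition \ref{prop-crithigh}, disposes of $S^4$, $\Lambda^4$, $\Gamma^4$ by proposition \ref{prop-ex}, inspects the long exact sequences attached to the short exact sequences of lemma \ref{lm-ses} (with the elementary Ext groups into tensor products of symmetric and exterior powers computed by sum-diagonal adjunction, K\"unneth and \cite{FS}), and passes to Weyl functors by the Ringel shift of lemma \ref{lm-ptitlm}, so your write-up is if anything more explicit than the paper's about the tuples, the shifts and the connecting-homomorphism bookkeeping. (One caveat unrelated to your method: since $T(4,2)=I^{(2)}$ and $\Ext^*_{\PP_\kk}(I^{(2)},\Lambda^4)$ is concentrated in degree $p^2-1=3$, proposition \ref{prop-ex} forces $i(\Lambda^4,2)=3$, so the entry $2$ in the stated table appears to be a typo in the paper that your argument would in fact correct.)
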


\begin{remark}
One sees on this example that the integers $i(F,r)$ are increasing with respect to the dominance order for Weyl functors, and decreasing with respect to the dominance order for Schur functors. It would be quite interesting to know if this is the shadow of some general phenomenon.
\end{remark}

\section{Application to symmetric groups}\label{sec-applicsym}

\begin{lemma}\label{lm-lm}
The Schur functor sends $p$-bounded projectives (resp. injectives) to projective (resp. injective) $\kk\Si_d$-modules. Moreover, if $F$ is a $p$-bounded projective or if $G$ is a $p$-bounded injective, the Schur functor induces an isomorphism:
$$\hom_{\PP_{d,\kk}}(F,G)\xrightarrow[]{\simeq} \hom_{\Si_d}(f_d(F),f_d(G))\;.$$ 
\end{lemma}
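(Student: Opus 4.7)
The plan is to reduce both assertions to the observation that the tensor power $\otimes^d$ is simultaneously a basic $p$-bounded projective and a basic $p$-bounded injective (since $\otimes^d = \Gamma^{(1,\dots,1)} = S^{(1,\dots,1)}$) with Schur image equal to the free module $\kk\Si_d$. Using the explicit formulas for the adjoints recalled in section \ref{sec-back}, one has $\otimes^d\cong \ell_d(\kk\Si_d)\cong r_d(\kk\Si_d)$, the second isomorphism coming from the standard identification $(\otimes^d\otimes \kk\Si_d)^{\Si_d}\cong \otimes^d$ (intertwining the diagonal $\Si_d$-action with the action on the $\kk\Si_d$-factor alone). Since $\ell_d$ is a left adjoint and $r_d(-)=(\otimes^d\otimes -)^{\Si_d}$ commutes with direct sums (the invariants functor for a finite group commutes with direct sums), it follows that $(\otimes^d)^{(J)}\cong \ell_d(\kk\Si_d^{(J)})\cong r_d(\kk\Si_d^{(J)})$ for any index set $J$.

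The central step will be to show that every basic $p$-bounded projective $\Gamma^\lambda$ and every basic $p$-bounded injective $S^\lambda$, with $\lambda=(\lambda_1,\dots,\lambda_n)$ satisfying $\lambda_i<p$, is a direct summand of $\otimes^d$. For this, the canonical inclusion $\Gamma^{\lambda_i}\hookrightarrow\otimes^{\lambda_i}$ is split by the normalized symmetrization $\tfrac{1}{\lambda_i!}\sum_{\sigma\in\Si_{\lambda_i}}\sigma$, and dually the canonical projection $\otimes^{\lambda_i}\twoheadrightarrow S^{\lambda_i}$ is split by the same formula; both are well-defined because $\lambda_i!$ is invertible in $\kk$. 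Taking tensor products produces the desired splittings. Consequently, any $p$-bounded projective $F$ is a direct summand of some $(\otimes^d)^{(J)}$, while any $p$-bounded injective $G$ embeds in a direct sum of basic $p$-bounded injectives, and this embedding splits by injectivity of $G$, making $G$ a direct summand of $(\otimes^d)^{(J)}$ as well.

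Applying $f_d$ then proves the first assertion. In the projective case, the unit isomorphism $M\xrightarrow{\simeq}f_d\ell_d M$ yields $f_d((\otimes^d)^{(J)})\cong\kk\Si_d^{(J)}$, so $f_d(F)$ is a direct summand of a free $\kk\Si_d$-module, hence projective. In the injective case, the counit isomorphism $f_dr_d N\xrightarrow{\simeq}N$ again gives $f_d((\otimes^d)^{(J)})\cong\kk\Si_d^{(J)}$, and $f_d(G)$ is a direct summand of this module, which is injective because $\kk\Si_d$ is self-injective (as the group algebra of a finite group is Frobenius).

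For the Hom isomorphism, the natural map $\alpha\colon\hom_{\PP_{d,\kk}}(F,G)\to\hom_{\Si_d}(f_dF,f_dG)$ is additive in each variable, so by the splittings above it suffices to verify that $\alpha$ is an isomorphism when $F=\ell_d(\kk\Si_d^{(J)})$ (in the projective case) or when $G=r_d(\kk\Si_d^{(J)})$ (in the injective case). In the first case, the $(\ell_d,f_d)$-adjunction combined with the unit isomorphism yields the chain
\[
\hom(\ell_dM,G)\;\cong\;\hom_{\Si_d}(M,f_dG)\;\cong\;\hom_{\Si_d}(f_d\ell_dM,f_dG),
\]
and a direct check (using $\alpha(\phi)=f_d\phi$ and the triangle identity) shows the composite equals $\alpha$; the injective case is symmetric using the $(f_d,r_d)$-adjunction and the counit isomorphism. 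The only nonformal ingredient in the whole argument is the splitting step using invertibility of $\lambda_i!$; everything else is a formal manipulation of the unit and counit isomorphisms of the adjunctions $(\ell_d,f_d)$ and $(f_d,r_d)$.
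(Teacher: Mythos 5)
Your proof is correct and follows essentially the same route as the paper's: reduce to $\otimes^d=\ell_d(\kk\Si_d)\cong r_d(\kk\Si_d)$, where the Hom map identifies with the adjunction isomorphism, and use that $p$-bounded projectives (resp.\ injectives) are direct summands of direct sums of copies of $\otimes^d$. The only difference is that you spell out the splitting of $\Gamma^\lambda$ (resp.\ $S^\lambda$) off $\otimes^d$ via the normalized symmetrizer $\tfrac{1}{\lambda_i!}\sum_{\sigma}\sigma$, a step the paper leaves implicit.
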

\begin{proof}
The left adjoint of $f_d$ sends $\kk\Si_d$ to $\otimes^d$. Thus $f_d(\otimes^d)=f_d(\ell_d(\kk\Si_d))\simeq \kk\Si_d$ is projective. Moreover, the map induced by $f_d$
\begin{align*}
\hom_{\PP_{d,\kk}}(\otimes^d,G)\to \hom_{\Si_d}(\kk\Si_d,f_d(G))
\end{align*}
is an isomorphism because it identifies with the adjunction isomorphism for $(\ell_d,f_d)$. This proves lemma \ref{lm-lm} for the $p$-bounded projective $\otimes^d$. Now, $f_d$ commutes with arbitrary direct sums, and $p$-bounded projectives are direct summands of direct sums of copies of $\otimes^d$, so this implies that lemma \ref{lm-lm} holds for all $p$-bounded projectives. The proof for $p$-bounded injectives is similar, using the right adjoint $r_d$.  
\end{proof}

The next theorem generalizes many theorems in \cite{KN}. In particular, theorem \ref{thm-KN} does not require any restriction on the characteristic, and works for all $F$ and all $G$. As regards concrete computations, the explicit bounds for $i(G,1)$ for Weyl functors $G$ given section \ref{subsec-ex} yield connectedness bounds which are at least as good as the ones given in \cite{KN}. However, we have not investigated estimates for $i(G,1)$ when $G$ is simple, hence, unlike \cite{KN}, we don't have concrete connectedness estimates for simple functors. 
\begin{theorem}\label{thm-KN}
Let $F$ and $G$ homogeneous functor of degree $d$. The map induced by the Schur functor:
$$\Ext^k_{\PP_{d,\kk}}(F,G)\xrightarrow[]{\simeq} \Ext^k_{\kk\Si_d}(f_d(F),f_d(G))$$
is an isomorphism in degrees $k<p(F,1)+i(G,1)-1$, and it is injective in degree $k=p(F,1)+i(G,1)-1$.
\end{theorem}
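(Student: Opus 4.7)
Write $n = p(F,1)$ and $m = i(G,1)$; the plan is to prove the statement by induction on $n+m$. The key input throughout is Lemma~\ref{lm-lm}: for a $p$-bounded projective $P$ the Schur functor induces an isomorphism $\hom_{\PP_{d,\kk}}(P,-) \xrightarrow{\sim} \hom_{\kk\Si_d}(f_d P, f_d -)$, and dually for $p$-bounded injectives. The overall strategy is to ``absorb'' one $p$-bounded projective cover $P_0 \twoheadrightarrow F$ (or injective envelope $G \hookrightarrow J^0$) using Lemma~\ref{lm-lm}, thereby reducing the pair $(F,G)$ to a pair with strictly smaller invariant $n+m$, and to propagate iso/injectivity along the long exact $\Ext^*$-sequences by the five-lemma.

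The first observation is that as soon as $n\ge 1$ (or symmetrically $m\ge 1$), the Schur map is already injective on $\hom_\PP(F,G)$: from a $p$-bounded projective cover $P_0 \twoheadrightarrow F$ one has $\hom_\PP(F,G) \hookrightarrow \hom_\PP(P_0,G) \cong \hom_{\Si_d}(f_d P_0, f_d G)$, and a chase against the analogous inclusion on the $\Si_d$ side gives injectivity. This settles the theorem for $n+m \le 1$. For the inductive step with $n\ge 2$ (symmetric if $m\ge 2$), let $0 \to F' \to P_0 \to F \to 0$ with $P_0$ a $p$-bounded projective; then $p(F',1) \ge n-1$, and applying the exact functor $f_d$ produces a short exact sequence with $f_d P_0$ projective as a $\kk\Si_d$-module. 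The long exact $\Ext^*$-sequences collapse to $\Ext^k_\PP(F,G) \cong \Ext^{k-1}_\PP(F',G)$ for $k \ge 2$ (and identically on the $\Si_d$ side, by projectivity of $f_d P_0$), together with a four-term exact sequence in degrees $0,1$ whose middle term is $\hom_\PP(P_0,G) \cong \hom_{\Si_d}(f_d P_0, f_d G)$. The inductive hypothesis applied to $(F',G)$ (with $n'+m' = (n-1)+m = n+m-1$) then transfers along these sequences by the five-lemma to deliver the required iso in degrees $k < n+m-1$ and injectivity at $k = n+m-1$ for the pair $(F,G)$.

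The main obstacle is the remaining base case $n=m=1$, which is not reachable by the reduction scheme above: stepping down from $(1,1)$ via $F'$ lands at $(0,1)$, where the inductive hypothesis only supplies \emph{injectivity} on $\hom$, not the isomorphism one would need to apply the five-lemma in degree $0$. I resolve this by using \emph{both} resolutions simultaneously. Pick a $p$-bounded projective cover $P_0 \twoheadrightarrow F$ with kernel $F'$ and a $p$-bounded injective envelope $G \hookrightarrow J^0$ with cokernel $G'$. Then
$$\hom_\PP(F,G) = \ker\bigl(\hom_\PP(F,J^0) \to \hom_\PP(F,G')\bigr),$$
and separately $\hom_\PP(F,G') \hookrightarrow \hom_\PP(P_0,G')$. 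Lemma~\ref{lm-lm} identifies both $\hom_\PP(F,J^0) \cong \hom_{\Si_d}(f_d F, f_d J^0)$ and $\hom_\PP(P_0,G') \cong \hom_{\Si_d}(f_d P_0, f_d G')$; a first chase shows the map $\hom_\PP(F,G') \to \hom_{\Si_d}(f_dF,f_dG')$ is injective, and a second chase against the kernel presentation of $\hom_\PP(F,G)$ then upgrades this to the $\Ext^0$-isomorphism. For the $\Ext^1$-injectivity in this base case, I present $\Ext^1_\PP(F,G)$ as the cokernel of $\hom_\PP(P_0,G) \to \hom_\PP(F',G)$, use $\hom_\PP(P_0,G) \cong \hom_{\Si_d}(f_dP_0,f_dG)$, and invoke the preliminary observation (applied to $F'$ and $G$, valid because $m = 1$) to conclude that $\hom_\PP(F',G) \hookrightarrow \hom_{\Si_d}(f_d F', f_d G)$; a final diagram chase against the analogous $\Si_d$-cokernel presentation yields the injectivity.
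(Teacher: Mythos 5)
Your proof is correct and follows essentially the same strategy as the paper's: dimension shifting along $p$-bounded projective covers and injective envelopes, Lemma \ref{lm-lm}, and five-lemma chases on the resulting long exact sequences; the paper merely organizes the reduction differently (first shifting $G$ until $i(G,1)=0$, then shifting $F$ until $p(F,1)=1$, ending at exactly your ``first observation''). Your separate treatment of the base case $n=m=1$ is correct but avoidable: stepping down via $F'$ to the pair $(F',G)$ with invariants $(0,1)$, the injectivity of $\hom_{\PP_{d,\kk}}(F',G)\to\hom_{\kk\Si_d}(f_d(F'),f_d(G))$ supplied by the induction already suffices to identify the kernels $\hom_{\PP_{d,\kk}}(F,G)\cong\hom_{\kk\Si_d}(f_d(F),f_d(G))$, since the middle terms $\hom(P_0,G)$ are isomorphic --- an isomorphism on the right-hand vertical is not needed for a kernel comparison.
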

\begin{proof}
Assume that there is a short exact sequence $0\to H\to J\to H'\to 0$ where $J$ is a $p$-bounded injective. The Schur functor induces a morphism from the induced $\Ext^*_{\PP_{d,\kk}}(F,-)$-long exact sequence and the induced $\Ext^*_{\Si_d}(f_d(F),f_d(-))$-long exact sequence. Using lemma \ref{lm-lm} together with the five lemma, we see that the Schur functor is $k$-connected for the pair $(F,H)$ (i.e. an isomorphism in $\Ext$-degree $<k$ and injective in $\Ext$-degree $k$) if and only if it is  $(k-1)$-connected for the pair $(F,H')$. Using this argument, we reduce the proof of theorem \ref{thm-KN} to the case where $i(G,1)=0$.
By a similar argument applied to the contravariant variable of $\Ext$, we reduce the proof further to the case where $i(F,1)=1$. In the latter case, $F$ is a quotient of a $p$-bounded projective $P$ and we have a commutative diagram:
$$\xymatrix{
\hom_{\PP_{d,\kk}}(P,G)\ar[r]^-{f_d}_-{\simeq}&\hom_{\Si_d}(f_d(P),f_d(G)) \\
\hom_{\PP_{d,\kk}}(F,G)\ar@{^{(}->}[u]\ar[r]^-{f_d}&\hom_{\Si_d}(f_d(F),f_d(G))\ar@{^{(}->}[u]
}$$
which proves that for the pair $(F,G)$ the Schur functor is indeed $p(F,1)-1$ connected (i.e. injective in degree zero).
\end{proof}

The following examples show that the bounds in theorem \ref{thm-KN} are optimal. 
\begin{example}
Let $Q^p$ be the socle of $\Gamma^p$. Then $Q^p$ is the simple functor with highest weight $(p-1,1)$. In particular $i(Q^p,1)\ge 1$ by corollary \ref{cor-bounded}. Since $\Gamma^p$ is the middle term of a non-split extension:
$$0\to Q^p\to \Gamma^p\to I^{(1)}\to 0\;.$$
This proves that $i(Q^p,1)\le 1$ by proposition \ref{prop-crithigh}, and that $\hom_{\PP_\kk}(\Gamma^p,Q^p)=0$. But $\hom_{\kk\Si_p}(f_p(\Gamma^p),f_p(Q^p))=\kk$ since $f_p(\Gamma^p)=f_p(Q^p)=\kk$. Hence the following map is not an isomorphism
$$\Ext_{\PP_{p,\kk}}^{i(Q^p,1)-1}\left(\Gamma^p,Q^p\right)\to \Ext_{\kk\Si_p}^{i(Q^p,1)-1}\left(f_p(Q^p),f_p(\Gamma^p)\right)\;.$$
\end{example}

\begin{example}
Let $F$ be a homogeneous functor of degree $d$. By proposition \ref{prop-crithigh}, 
$\Ext^{i(F,1)}_{\PP_{\kk,d}}(T(d,1),F)$ is nonzero . On the other hand $f_d(T(d,1))=0$ by corollary \ref{thm-1bis}, so that the following  map is not injective
$$\Ext^{{i(F,1)}}_{\PP_{d,\kk}}\left(T(d,1),F\right)\to \Ext^{{i(F,1)}}_{\kk\Si_d}\left(f_d(T(d,1)),f_d(F)\right)\;.$$
\end{example}

\appendix
\section{Representations of tensor products algebras}\label{sec-reptens}

This appendix collects some results about representations of tensor product algebras. All these results are standard (except maybe proposition \ref{prop-submodules}), but they are scattered in the literature and not always stated under the form that we want to use. 

In the remainder of the section, we fix two finite dimensional algebras $A$ and $B$ over a ground field $\kk$. We assume furthermore that $\kk$ is a splitting field for these two algebras, that is the endomorphism rings of simple modules have dimension one over $\kk$. (This hypothesis is satisfied for quasi-hereditary algebras, and of course for all algebras if $\kk$ is algebraically closed). 

If $M$ is an $A$-module and $N$ is a $B$-module, we denote by $M\boxtimes N$ their tensor product, viewed as an $A\otimes B$-module. Tensor product yields a K\"unneth morphism
$$\Ext^*_{A}(M,M')\otimes \Ext^*_B(N,N')\xrightarrow[]{\kappa} \Ext^*_{A\otimes B}(M\boxtimes M',N\boxtimes N')\;.$$
\begin{proposition}
The K\"unneth morphism $\kappa$ is an isomorphism if $M$ and $M'$ have finite dimension or if $M$ and $N$ have finite dimension.
\end{proposition}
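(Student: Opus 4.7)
The plan is to build a projective resolution of $M\boxtimes N$ over $A\otimes B$ as the external tensor product of projective resolutions over $A$ and $B$, then identify the resulting $\hom$-complex.

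First I would choose projective resolutions $P_\bullet \to M$ over $A$ and $Q_\bullet \to N$ over $B$. Since $A$ and $B$ are finite dimensional, whenever $M$ (resp.\ $N$) is finite dimensional we can, and will, arrange that every term of $P_\bullet$ (resp.\ $Q_\bullet$) is a finitely generated projective module.

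Next I would check that the totalization of the double complex $P_\bullet \otimes_\kk Q_\bullet$ is a projective resolution of $M\boxtimes N$ over $A\otimes B$. Each $P_i\otimes Q_j$ is projective over $A\otimes B$: if $P_i$ is a summand of a free $A$-module $A^{(I)}$ and $Q_j$ a summand of $B^{(J)}$, then $P_i\otimes Q_j$ is a summand of $(A\otimes B)^{(I\times J)}$. That the augmentation to $M\boxtimes N$ is a quasi-isomorphism is the Künneth formula for chain complexes over the field $\kk$: all $\kk$-modules are flat, so $H_\ast(P_\bullet\otimes Q_\bullet)\simeq H_\ast(P_\bullet)\otimes H_\ast(Q_\bullet)=M\otimes N$ concentrated in degree zero.

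The third step, which is the heart of the matter, is to identify the $\hom$-complex
$$\hom_{A\otimes B}(P_\bullet\otimes Q_\bullet,\, M'\boxtimes N')\;\simeq\; \hom_A(P_\bullet,M')\,\otimes_\kk\,\hom_B(Q_\bullet,N')\;.$$
There is an obvious natural map from right to left. For $P=A$ and $Q=B$ both sides equal $M'\otimes N'$, so by additivity the map is an isomorphism whenever both $P_i$ and $Q_j$ are finitely generated projectives; this covers the case where $M$ and $N$ are both finite dimensional. If $P_\bullet$ is finitely generated but $Q_\bullet$ is arbitrary, reduction to $P=A^n$ shows we need the natural map $M'\otimes \hom_B(Q_j,N')\to \hom_B(Q_j,M'\otimes N')$ to be an isomorphism, which holds as soon as $M'$ is finite dimensional; this covers the other case of the hypothesis.

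Once this identification is established, taking cohomology gives on one side $\Ext^\ast_{A\otimes B}(M\boxtimes N,M'\boxtimes N')$ (since we have a projective resolution), and on the other side, again by Künneth for chain complexes of $\kk$-vector spaces, the tensor product $\Ext^\ast_A(M,M')\otimes_\kk \Ext^\ast_B(N,N')$. The resulting isomorphism coincides with the Künneth morphism $\kappa$ by naturality. The main (mild) obstacle is the finiteness-sensitive identification of the $\hom$-complexes above: the two cases in the hypothesis are precisely the two minimal ways to ensure that both factors can be pulled out of the $\hom_{A\otimes B}$, and no further subtlety appears once this bookkeeping is done.
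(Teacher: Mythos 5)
Your proposal is correct and is essentially the paper's argument made explicit: the paper also reduces to degree zero via (finitely generated) resolutions and additivity, and the decisive step in both is the same identification $M'\otimes \hom_B(-,N')\simeq \hom_B(-,M'\otimes N')$, valid because $M'$ (resp. a finitely generated projective in the other case) is finite dimensional. No gaps.
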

\begin{proof}
If $M$ has finite dimension, then it has a projective resolution by finite dimensional projective $A$-modules. Thus, it suffices to prove the result in degree zero (i.e. for $\hom$), the general result follows formally by taking resolutions. Using semi-exactness and additivity of $\hom$ and $\boxtimes$ with respect to their first variable, one reduces furthermore to the case $M=A$. If $M'$ has finite dimension, the K\"unneth morphism in degree zero identifies with the map 
$M'\otimes  \hom_B(N,N')\to  \hom_B(N,N'\otimes M') $
which is an isomorphism since $M'$ has finite dimension. If $N$ has finite dimension, one can also reduce to the case $N=B$, and in the latter case it is clear that $\kappa$ is an isomorphism.
\end{proof}

\begin{proposition}\label{prop-simple}
Up to isomorphism, the simple $A\otimes B$-modules are the tensor products $L_1\boxtimes L_2$ where $L_1$ (resp. $L_2$) is a simple module over $A$ (resp. $B$). Moreover, two such simple modules $L_1\boxtimes L_2$ and $L_1'\boxtimes L_2'$ are isomorphic if and only if $L_1\simeq L'_1$ and $L_2\simeq L'_2$.
\end{proposition}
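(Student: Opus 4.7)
The plan is to reduce the classification to the semisimple quotients and then apply Artin--Wedderburn using the splitting hypothesis.

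First I will identify the Jacobson radical of $A\otimes B$, namely
$$\mathrm{rad}(A\otimes B)\;=\;\mathrm{rad}(A)\otimes B\;+\;A\otimes \mathrm{rad}(B).$$
The right-hand side is a sum of two two-sided ideals, each of which is nilpotent: if $\mathrm{rad}(A)^n=0$ then $(\mathrm{rad}(A)\otimes B)^n\subseteq \mathrm{rad}(A)^n\otimes B=0$, and symmetrically on the other side. A sum of two nilpotent two-sided ideals is nilpotent, so the right-hand side is contained in $\mathrm{rad}(A\otimes B)$. The quotient is canonically $(A/\mathrm{rad}A)\otimes (B/\mathrm{rad}B)$; by the splitting hypothesis together with Artin--Wedderburn, $A/\mathrm{rad}A\cong \prod_{i} M_{n_i}(\kk)$ and $B/\mathrm{rad}B\cong \prod_{j} M_{m_j}(\kk)$, so their tensor product is $\prod_{i,j} M_{n_im_j}(\kk)$, which is semisimple. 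Hence the reverse inclusion also holds.

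Next, simple $A\otimes B$-modules coincide with simple modules over the semisimple quotient $\prod_{i,j}M_{n_im_j}(\kk)$, so they are indexed by pairs $(i,j)$ and the simple attached to $(i,j)$ has dimension $n_im_j$. Writing $L_{1,i}$ (resp.\ $L_{2,j}$) for the simple $A$-module (resp.\ $B$-module) corresponding to the $i$-th (resp.\ $j$-th) matrix factor, the tensor product $L_{1,i}\boxtimes L_{2,j}$ is killed by $\mathrm{rad}(A)\otimes B+A\otimes \mathrm{rad}(B)$, has dimension $n_im_j$, and is a nonzero module over the $(i,j)$-th matrix factor of the semisimple quotient. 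Hence it must be the corresponding simple, and this exhausts the list up to isomorphism.

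For the isomorphism classification, I restrict $L_1\boxtimes L_2$ along $A\to A\otimes B$, $a\mapsto a\otimes 1$: as an $A$-module it is isomorphic to $L_1^{\oplus \dim L_2}$. An isomorphism $L_1\boxtimes L_2\cong L_1'\boxtimes L_2'$ of $A\otimes B$-modules restricts to an isomorphism $L_1^{\oplus \dim L_2}\cong {L_1'}^{\oplus \dim L_2'}$ of $A$-modules, forcing $L_1\cong L_1'$; the symmetric argument yields $L_2\cong L_2'$.

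The one substantive step is the identification of the Jacobson radical. The splitting hypothesis enters essentially there: without it, $A/\mathrm{rad}A$ could be a product of matrix algebras over nontrivial division rings, and the tensor product of two such matrix algebras need not remain simple, breaking the bijection between simples of $A\otimes B$ and pairs of simples of $A$ and $B$.
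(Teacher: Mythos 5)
Your proof is correct. The exhaustiveness step is the same as the paper's: both identify $\mathrm{rad}(A)\otimes B+A\otimes\mathrm{rad}(B)$ as a nilpotent ideal with semisimple quotient and invoke Wedderburn, with the splitting hypothesis entering through $M_{n}(\kk)\otimes M_{m}(\kk)\cong M_{nm}(\kk)$. Where you diverge is in the other two assertions. The paper first proves that $L_1\boxtimes L_2$ is simple via the density theorem and establishes the isomorphism classification via the K\"unneth formula $\hom_{A\otimes B}(L_1\boxtimes L_2,L_1'\boxtimes L_2')\simeq\hom_A(L_1,L_1')\otimes\hom_B(L_2,L_2')$, and only then uses the radical computation (with a rather implicit dimension count) to show nothing is missing. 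You instead make the Wedderburn decomposition of the semisimple quotient fully explicit as $\prod_{i,j}M_{n_im_j}(\kk)$ and read off simplicity of $L_{1,i}\boxtimes L_{2,j}$ by matching dimensions against the simple of the $(i,j)$-th factor, then settle the isomorphism classification by restricting along $a\mapsto a\otimes 1$ to get $L_1^{\oplus\dim L_2}$ and applying Jordan--H\"older. Your route buys a more self-contained and elementary argument (no density theorem, no K\"unneth), and it makes the paper's ``dimension counting'' transparent; the paper's route has the advantage that the K\"unneth computation of $\hom$ and $\End$ is reused repeatedly in the rest of the appendix, so establishing it up front is not wasted effort.
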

\begin{proof}
The fact that $L_1\boxtimes L_2$ is simple if $L_1$ and $L_2$ are simple follows from the density theorem \cite[3.27]{CR}. If two such tensor products $L_1\boxtimes L_2$ and $L_1'\boxtimes L_2'$ are isomorphic, then $L_1\simeq L'_1$ and $L_2\simeq L'_2$ because $\hom_{A\otimes B}(L_1\boxtimes L_2,L_1'\boxtimes L_2')$ is isomorphic to $\hom_A(L_1,L_1')\otimes \hom_B(L_2,L_2')$. It remains to prove that any simple $A\otimes B$-module is of the form $L_1\boxtimes L_2$.
The Jacobson radical $J(A\otimes B)$ of $A\otimes B$ contains $J(A)\otimes B+A\otimes J(B)$ since the latter is a nilpotent ideal \cite[5.15]{CR}. Thus we have a surjective morphism
$$\pi:A/J(A)\otimes B/J(B)\twoheadrightarrow A\otimes B/J(A\otimes B)\;.$$
Since the quotient $C/J(C)$ of a $\kk$-algebra $C$ is a  semisimple ring \cite[5.19]{CR} with the same simple modules as $C$, it follows form the Wedderburn theorem and dimension counting that $\pi$ is an isomorphism and that all simple $A\otimes B$-modules have the form $L_1\boxtimes L_2$.  
\end{proof}

\begin{lemma}\label{lm-socle}
For all modules $M$ and $N$, $\Soc(M)\boxtimes \Soc(N)=\Soc(M\boxtimes N)$.
\end{lemma}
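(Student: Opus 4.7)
The plan is to use the characterization $\Soc(X) = \mathrm{ann}_X(J(R))$ for any finitely generated $R$-module $X$ (where $R$ is a finite-dimensional $\kk$-algebra with Jacobson radical $J(R)$), combined with the identity $J(A\otimes B)=J(A)\otimes B+A\otimes J(B)$ which was already established in the proof of proposition \ref{prop-simple}. This reduces the equality to a separate analysis of the two annihilator conditions.

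First I would dispose of the easy inclusion $\Soc(M)\boxtimes\Soc(N)\subseteq \Soc(M\boxtimes N)$. Writing $\Soc(M)=\bigoplus_\alpha L_1^\alpha$ and $\Soc(N)=\bigoplus_\beta L_2^\beta$ with $L_1^\alpha$, $L_2^\beta$ simple, proposition \ref{prop-simple} ensures each $L_1^\alpha\boxtimes L_2^\beta$ is a simple $A\otimes B$-module, so $\Soc(M)\boxtimes\Soc(N)\cong \bigoplus_{\alpha,\beta} L_1^\alpha\boxtimes L_2^\beta$ is a semisimple submodule of $M\boxtimes N$ and is therefore contained in its socle.

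For the reverse inclusion, I view $M\boxtimes N=M\otimes_\kk N$ first as an $A$-module via the embedding $a\mapsto a\otimes 1$. Fixing a $\kk$-basis $(n_i)$ of $N$, we get an $A$-module decomposition $M\otimes_\kk N\cong\bigoplus_i M$, and hence
$$\mathrm{ann}_{M\boxtimes N}(J(A)\otimes B)=\Soc_A(M\otimes_\kk N)=\Soc(M)\otimes_\kk N.$$
Symmetrically, $\mathrm{ann}_{M\boxtimes N}(A\otimes J(B))=M\otimes_\kk \Soc(N)$. Since $J(A\otimes B)=J(A)\otimes B+A\otimes J(B)$, the socle is annihilated by the sum iff by both summands, so
$$\Soc(M\boxtimes N)=\bigl(\Soc(M)\otimes_\kk N\bigr)\,\cap\,\bigl(M\otimes_\kk\Soc(N)\bigr).$$
The proof concludes by invoking the elementary fact about tensor products of $\kk$-vector spaces: for subspaces $U\subseteq M$ and $V\subseteq N$, one has $(U\otimes_\kk N)\cap(M\otimes_\kk V)=U\otimes_\kk V$ inside $M\otimes_\kk N$. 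Applied to $U=\Soc(M)$ and $V=\Soc(N)$, this yields exactly $\Soc(M)\boxtimes\Soc(N)$.

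The only subtle point is this last intersection identity, but it is a standard consequence of choosing compatible bases (extend a basis of $U$ to one of $M$ and a basis of $V$ to one of $N$, then expand an element lying in both subspaces in the product basis of $M\otimes_\kk N$). Everything else reduces to the Jacobson radical computation already present in the proof of proposition \ref{prop-simple}, so I don't foresee a serious obstacle.
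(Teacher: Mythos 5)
Your proof is correct, but it takes a genuinely different route from the paper. The paper handles the hard inclusion by computing, for each simple $L_1\boxtimes L_2$, the space $\hom_{A\otimes B}(L_1\boxtimes L_2,M\boxtimes N)$ via the K\"unneth formula and observing it coincides with $\hom_{A\otimes B}(L_1\boxtimes L_2,\Soc(M)\boxtimes\Soc(N))$, so that every simple submodule of $M\boxtimes N$ already lies in $\Soc(M)\boxtimes\Soc(N)$; this leans on proposition \ref{prop-simple} only through the classification of the simple $A\otimes B$-modules. You instead use the socle-as-annihilator description $\Soc(X)=\mathrm{ann}_X(J(R))$ together with the equality $J(A\otimes B)=J(A)\otimes B+A\otimes J(B)$, reducing everything to the linear-algebra identity $(U\otimes_\kk N)\cap(M\otimes_\kk V)=U\otimes_\kk V$. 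Both arguments ultimately draw on the splitting-field hypothesis through the same source (the isomorphism $\pi$ in the proof of proposition \ref{prop-simple}, which is exactly the radical formula you quote), but yours avoids the K\"unneth formula for $\hom$ entirely and is arguably more mechanical; the paper's version has the advantage of dualizing directly to the statement about heads (lemma \ref{lm-head}). One small point to clean up: you state the annihilator characterization for finitely generated modules but apply it to $M\boxtimes N$, which the lemma allows to be infinite-dimensional; this is harmless, since $\Soc(X)=\mathrm{ann}_X(J(R))$ holds for arbitrary modules over an artinian ring (the annihilator is a module over the semisimple ring $R/J(R)$, hence semisimple, and conversely $J(R)$ kills any semisimple module), and your direct-sum decomposition $M\otimes_\kk N\cong\bigoplus_i M$ only needs a $\kk$-basis of $N$.
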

\begin{proof}
By proposition \ref{prop-simple}, $\mathrm{Soc}(M)\boxtimes \mathrm{Soc}(N)$ is a semisimple submodule of $M\boxtimes N$. Moreover, for all simple modules $L_1\boxtimes L_2$, we have:
\begin{align*}
\hom_{A\otimes B}(L_1\boxtimes L_2,M\boxtimes N)&=\hom_A(L_1,M)\otimes \hom_B(L_2,N)\\&=\hom_A(L_1,\mathrm{Soc}(M))\otimes \hom_B(L_2,\mathrm{Soc}(N))\\&= \hom_{A\otimes B}(L_1\boxtimes L_2,\mathrm{Soc}(M)\boxtimes \mathrm{Soc}(N))\;.
\end{align*}
As a consequence, all the simple submodules of $M\boxtimes N$ are submodules of $\Soc(M)\boxtimes \Soc(N)$. This proves the lemma.
\end{proof}

\begin{lemma}\label{lm-head}
For all modules $M$ and $N$, $\Head(M\boxtimes N)=\Head(M)\boxtimes \Head(N)$
\end{lemma}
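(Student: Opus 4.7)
The plan is to reduce the question to a statement about the Jacobson radical of $A\otimes B$ and then use right exactness of the tensor product. Recall that for any module $X$ over a finite-dimensional algebra $C$ one has $\mathrm{Head}(X)=X/\mathrm{rad}(X)$ and $\mathrm{rad}(X)=J(C)\cdot X$, where $J(C)$ is the Jacobson radical. So it suffices to compute $J(A\otimes B)$ and then the submodule $J(A\otimes B)(M\boxtimes N)$.

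First I would extract the following identity from the proof of proposition \ref{prop-simple}:
\[
J(A\otimes B)=J(A)\otimes B+A\otimes J(B).
\]
Indeed, the inclusion $\supseteq$ is noted in that proof (the right-hand side is a nilpotent two-sided ideal). For the reverse inclusion, the proof shows that the natural surjection
\[
\pi:A/J(A)\otimes B/J(B)\twoheadrightarrow (A\otimes B)/J(A\otimes B)
\]
is an isomorphism; equivalently $\ker\pi=0$, and since $\ker\pi$ is exactly the image of $J(A)\otimes B+A\otimes J(B)$ in $A\otimes B/J(A\otimes B)$, we conclude.

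Next I would multiply out:
\[
\mathrm{rad}(M\boxtimes N)=\bigl(J(A)\otimes B+A\otimes J(B)\bigr)(M\boxtimes N)=\mathrm{rad}(M)\boxtimes N+M\boxtimes\mathrm{rad}(N).
\]
Finally, using right exactness of $\boxtimes$ over $\kk$, the iterated cokernel computes as
\[
\frac{M\boxtimes N}{\mathrm{rad}(M)\boxtimes N+M\boxtimes\mathrm{rad}(N)}\;\simeq\;\frac{M}{\mathrm{rad}(M)}\boxtimes\frac{N}{\mathrm{rad}(N)}=\mathrm{Head}(M)\boxtimes\mathrm{Head}(N),
\]
which gives the claim. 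There is no real obstacle here; the only nontrivial input is the description of $J(A\otimes B)$, which itself uses the splitting field hypothesis (through the fact that a tensor product of two split semisimple $\kk$-algebras is semisimple) and is already in hand from proposition \ref{prop-simple}. Note that, in contrast with the socle lemma \ref{lm-socle}, the K\"unneth isomorphism is not directly applicable here because $\hom_{A\otimes B}(M\boxtimes N,L_1\boxtimes L_2)$ does not satisfy the finiteness hypothesis of the K\"unneth proposition on the source; this is why we argue via the radical instead of dualizing the proof of lemma \ref{lm-socle}.
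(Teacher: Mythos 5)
Your proof is correct, and it takes a genuinely different route from the paper's. The paper first treats finite-dimensional $M$ and $N$ by dualizing the proof of the socle lemma \ref{lm-socle}, extends to arbitrary projectives by additivity of $\boxtimes$, and then handles general $M$, $N$ by sandwiching $M\boxtimes N$ between a tensor product of projective covers and $\Head(M)\boxtimes\Head(N)$ and taking heads. You instead compute the Jacobson radical of $A\otimes B$ and use $\mathrm{Rad}(X)=J(C)X$, which is valid for \emph{every} module $X$ over a finite-dimensional algebra $C$ because $X/J(C)X$ is a module over the semisimple algebra $C/J(C)$; this makes your argument uniform in $M$ and $N$, with no separate treatment of the infinite-dimensional case, and your closing remark correctly identifies why a naive dualization of lemma \ref{lm-socle} via the K\"unneth formula would break down there. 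Both arguments ultimately rest on the same input from proposition \ref{prop-simple}, namely that $\pi:A/J(A)\otimes B/J(B)\to (A\otimes B)/J(A\otimes B)$ is an isomorphism. One small slip in your write-up: $\ker\pi$ is the image of $J(A\otimes B)$ in $A/J(A)\otimes B/J(B)$, i.e. $J(A\otimes B)/\bigl(J(A)\otimes B+A\otimes J(B)\bigr)$, not ``the image of $J(A)\otimes B+A\otimes J(B)$ in $A\otimes B/J(A\otimes B)$'' (the latter is zero for trivial reasons); but the intended conclusion $J(A\otimes B)=J(A)\otimes B+A\otimes J(B)$ does follow correctly from $\ker\pi=0$ together with the inclusion already established, so nothing essential is lost.
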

\begin{proof}
If $M$ and $N$ have finite dimension, the proof is dual to the proof of lemma \ref{lm-socle}. By additivity of $\boxtimes$ with respect to both variables, the result is then true when $M$ and $N$ are arbitrary projectives. In general let $P$, resp. $Q$ be a projective cover of $\mathrm{Head}(M)$, resp. $\mathrm{Head}(N)$. One has quotient maps $P\boxtimes Q\twoheadrightarrow M\boxtimes N\twoheadrightarrow \mathrm{Head}(M)\boxtimes \mathrm{Head}(N)$, and the result follows by taking heads of these modules.
\end{proof}

Lemma \ref{lm-socle} can be applied iteratively to identify the socle filtration of $M\boxtimes N$.  We index socle filtrations of modules so that the $(-1)$-th term is zero and the $0$-th term is the socle of the modules.

\begin{proposition}\label{prop-socle}
For all modules $M$, $N$, the socle filtration of $M\boxtimes N$ is the tensor product of the socle filtration of $M$ with the socle filtration of $N$.
\end{proposition}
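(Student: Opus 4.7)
The plan is to compute $\Soc^k(M\boxtimes N)$ explicitly as an annihilator and then match it with $F_k:=\sum_{i+j=k}\Soc^i(M)\boxtimes\Soc^j(N)$ (writing $\Soc^i$ for the $i$-th term of the socle filtration, with the convention $\Soc^{-1}=0$). Since $A$ and $B$ are finite-dimensional, so is $A\otimes B$, and its Jacobson radical is nilpotent, so a straightforward induction on $k$ yields the characterization $\Soc^k(X)=\mathrm{ann}_X(J(A\otimes B)^{k+1})$ for any $A\otimes B$-module $X$. Combining this with the identity $J(A\otimes B)=J(A)\otimes B+A\otimes J(B)$, which was established in the proof of Proposition \ref{prop-simple}, and expanding
\[
J(A\otimes B)^{k+1}=\sum_{i+j=k+1}J(A)^i\otimes J(B)^j,
\]
I arrive at
\[
\Soc^k(M\boxtimes N)=\bigcap_{i+j=k+1}\mathrm{ann}_{M\boxtimes N}\bigl(J(A)^i\otimes J(B)^j\bigr).
\]

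The heart of the argument is the general identity, for any two-sided ideals $I\subseteq A$ and $J\subseteq B$,
\[
\mathrm{ann}_{M\boxtimes N}(I\otimes J)=\mathrm{ann}_M(I)\boxtimes N+M\boxtimes \mathrm{ann}_N(J),
\]
and this is where I expect the main obstacle to lie. The inclusion $\supseteq$ is immediate. For the reverse inclusion, I plan to use the injection $M/\mathrm{ann}_M(I)\hookrightarrow\hom_\kk(I,M)$ sending $m$ to the $\kk$-linear map $a\mapsto am$, together with its analogue for $N$. Tensoring these two injections over $\kk$ and composing with the canonical isomorphism $\hom_\kk(I,M)\otimes_\kk\hom_\kk(J,N)\xrightarrow[]{\simeq}\hom_\kk(I\otimes J,\,M\otimes N)$, which is an isomorphism precisely because $I$ and $J$ are finite-dimensional over $\kk$, produces an injection from $(M/\mathrm{ann}_M(I))\otimes_\kk(N/\mathrm{ann}_N(J))$ into $\hom_\kk(I\otimes J,\,M\boxtimes N)$. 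Under this injection, the class of any $z\in M\boxtimes N$ is sent to the multiplication map $\xi\mapsto\xi\cdot z$; if $z$ annihilates $I\otimes J$, this map vanishes, so the class of $z$ is zero, which means $z\in \mathrm{ann}_M(I)\boxtimes N+M\boxtimes \mathrm{ann}_N(J)$.

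Applying this identity with $I=J(A)^i$ and $J=J(B)^j$, and using $\mathrm{ann}_M(J(A)^i)=\Soc^{i-1}(M)$, I would conclude
\[
\Soc^k(M\boxtimes N)=\bigcap_{a+b=k-1,\;a,b\ge-1}\bigl(\Soc^a(M)\boxtimes N+M\boxtimes \Soc^b(N)\bigr).
\]
To identify this intersection with $F_k$, I would fix $\kk$-bases of $M$ and $N$ compatible with their socle filtrations, so that each basis vector has a well-defined socle level, namely the smallest $i\ge 0$ for which the vector belongs to $\Soc^i$. Each summand in the intersection is then the span of the basis tensors $e\otimes f$ of levels $(i',j')$ satisfying $i'\le a$ or $j'\le b$, and a short elementary combinatorial check shows that such a tensor lies in every summand with $a+b=k-1$ if and only if $i'+j'\le k$. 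But this last condition is exactly the defining condition for basis tensors of $F_k$, so the two subspaces agree and the proof is complete.
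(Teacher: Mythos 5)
Your proof is correct, but it follows a genuinely different route from the paper's. The paper proceeds by induction on the socle layers: it sets $F^n=\sum_{i+j=n}M^i\boxtimes N^j$ and, assuming $F^n=(M\boxtimes N)^n$, identifies $F^{n+1}/F^n$ with the socle of a suitable quotient by applying Lemma \ref{lm-socle} (which itself rests on the classification of simples in Proposition \ref{prop-simple}) to the modules $(M/M^i)\boxtimes(N/N^j)$. You instead compute every term of the filtration in closed form as an annihilator, $\Soc^k(X)=\mathrm{ann}_X(J(A\otimes B)^{k+1})$, feed in the identity $J(A\otimes B)=J(A)\otimes B+A\otimes J(B)$ from the proof of Proposition \ref{prop-simple}, and reduce everything to the annihilator identity $\mathrm{ann}_{M\boxtimes N}(I\otimes J)=\mathrm{ann}_M(I)\boxtimes N+M\boxtimes\mathrm{ann}_N(J)$, whose proof via the embedding into $\hom_\kk(I\otimes J,M\otimes N)$ is clean and correct (the finite-dimensionality of $I$ and $J$ is exactly what makes the K\"unneth-type map an isomorphism, even for infinite-dimensional $M$ and $N$). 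Your argument buys a reusable general lemma and, as its case $k=0$, an independent re-proof of Lemma \ref{lm-socle} that bypasses the hom-computation with simple modules; the cost is the final combinatorial identification of the intersection $\bigcap_{a+b=k-1}\bigl(\Soc^a(M)\boxtimes N+M\boxtimes\Soc^b(N)\bigr)$ with $\sum_{i+j=k}\Soc^i(M)\boxtimes\Soc^j(N)$, which your basis-level argument handles correctly but which the paper's inductive setup avoids entirely. Both proofs ultimately depend on the splitting-field hypothesis through the description of $J(A\otimes B)$, so neither is more general than the other.
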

\begin{proof} Let $M^i$, $N^i$, $(M\boxtimes N)^i$ be the terms of the socle filtrations of $M$,  $N$, $M\boxtimes N$, and let $F^n:=\sum_{i+j=n} M^i\boxtimes N^j$. We prove $F^n=(M\boxtimes N)^n$ by induction on $n$. We have $F^0=M^0\boxtimes N^0=(M\boxtimes N)^0$ by lemma \ref{lm-socle}. Assume that $F^n= (M\boxtimes N)^n$. Let $\iota$ be the canonical inclusion:
$$\bigoplus_{
i+j=n+1
} (M^{i}/M^{i-1})\boxtimes (N^{j}/N^{j-1}) = F^{n+1}/F^n\hookrightarrow (M\boxtimes N)/F^n\;. $$
Let $\phi$ denote the canonical inclusion 
$$(M\boxtimes N)/F^n\hookrightarrow \bigoplus_{i+j=n+1}(M/M^{j-1})\boxtimes (N/N^{j-1})\;.$$
The composite $\phi\circ\iota$ is the direct sum of the canonical inclusions 
$$(M^{i+1}/M^i)\boxtimes (N^{j+1}/N^j)\hookrightarrow  (M/M^i)\boxtimes (N/N^j)\;.$$ 
Thus, it follows from lemma \ref{lm-socle} that $\phi\circ \iota$ maps the semisimple module $F^{n+1}/F^n$ isomorphically onto the socle of the target of $\phi$. In particular, $\iota$ is an isomorphism.
\end{proof}

Recall that a finite module is multiplicity free if it has a composition series whose composition factors are pairwise non-isomorphic.

\begin{proposition}\label{prop-submodules}
Assume that one of the modules $M$, $N$ is multiplicity free. Then for all submodules $S\subset M\boxtimes N$, there are submodules $U_\alpha$ of $M$ and submodules $V_\alpha$ of $N$ such that 
$S=\sum U_\alpha\boxtimes V_\alpha$.
\end{proposition}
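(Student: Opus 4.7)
Let $T := \sum\{U\boxtimes V : U\subseteq M,\ V\subseteq N,\ U\boxtimes V\subseteq S\}$, the sum of all ``rectangular'' submodules contained in $S$. Then $T\subseteq S$ trivially, and the proposition amounts to the equality $T=S$; equivalently, that $S$ is spanned as a $\kk$-vector space by decomposable tensors $m\otimes n$ lying in $S$. Assume without loss of generality that $N$ is multiplicity free (the case in which $M$ is multiplicity free is treated symmetrically). I would prove $T=S$ by induction on $\dim_\kk N$.

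For the base case $N$ simple, I would use the density theorem, valid because $\kk$ is a splitting field for $B$: the image of $B$ in $\mathrm{End}_\kk(N)$ is all of $\mathrm{End}_\kk(N)$, so $S$ is stable under the action of $1\otimes \mathrm{End}_\kk(N)$. Using a basis of $N$ and the associated elementary projections to split elements of $S$ along this basis, one shows $S=U\boxtimes N$ for the $A$-submodule $U:=\{m\in M : m\otimes N\subseteq S\}$; thus $S$ itself is a rectangle and lies in $T$.

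For the inductive step, pick a simple submodule $L'\subseteq N$. Since $N$ is multiplicity free, $N/L'$ is also multiplicity free and has strictly smaller dimension. From the short exact sequence
\[
0\longrightarrow M\boxtimes L'\longrightarrow M\boxtimes N\xrightarrow{\ \pi\ } M\boxtimes (N/L')\longrightarrow 0
\]
set $S_1:=S\cap (M\boxtimes L')$ and $S_2:=\pi(S)$. The base case, applied to $S_1\subseteq M\boxtimes L'$, yields $S_1=U_1\boxtimes L'$, which already lies in $T$. Induction applied to $N/L'$ gives $S_2=\sum_\beta U_\beta\boxtimes W_\beta$ and, crucially, that $S_2$ is spanned by its decomposable tensors. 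It is then sufficient to establish the following lifting assertion: for each decomposable $m\otimes \bar n\in S_2$, there exists a lift $n\in N$ of $\bar n$ such that $m\otimes n\in S$. Granting this, the cyclic submodule $\langle m\otimes n\rangle = Am\boxtimes Bn\subseteq S$ is a rectangle inside $T$; summing over a spanning family of decomposable tensors in $S_2$, and adjoining the rectangle $S_1\subseteq T$ to absorb the kernel of $\pi$, exhibits an arbitrary $s\in S$ as an element of $T$.

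The main obstacle is precisely this lifting step. Given any lift $\tilde s\in S$ of $m\otimes \bar n$ and a fixed lift $n_0\in N$ of $\bar n$, the difference $z:=\tilde s-m\otimes n_0$ lies in $M\boxtimes L'$, and its class modulo $U_1\boxtimes L'$ defines a well-defined obstruction in $(M/U_1)\boxtimes L'$; a decomposable lift exists if and only if this class lies in the ``column'' $\bar m\otimes L'$ (where $\bar m$ denotes the image of $m$ in $M/U_1$). I expect verifying this vanishing to be the technically delicate heart of the proof: it is exactly here that the multiplicity-free hypothesis on $N$ should be used in an essential way, since the fact that $L'$ occurs with multiplicity one in $N$ implies it is not a composition factor of $N/L'$, hence $\hom_B(W_\beta,L')=0$; combined with a K\"unneth-style analysis of the relevant $\mathrm{Ext}^1$ group (cf.\ the K\"unneth isomorphism preceding Proposition~\ref{prop-simple}), this vanishing should force the obstruction class to lie in the required column and thus produce the desired decomposable lift.
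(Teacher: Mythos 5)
Your overall architecture (induction on $\dim_\kk N$ along a simple submodule $L'\subseteq\Soc N$, with the base case handled by the density theorem) is genuinely different from the paper's, and the base case and the final assembly step are fine as far as they go. But the proof has a real gap exactly where you flag it: the lifting assertion is the entire content of the inductive step, and you do not prove it. The mechanism you propose for filling it does not obviously work. Knowing that $\hom_B(W_\beta,L')=0$ and analysing $\Ext^1_{A\otimes B}(S_2,S_1)$ via the K\"unneth isomorphism is information about the extension $0\to S_1\to S\to S_2\to 0$ as a module extension; even if you could show this extension splits, an $A\otimes B$-linear splitting $\sigma\colon S_2\to S$ has no reason to carry a decomposable tensor $m\otimes\bar n$ to a decomposable tensor of $M\boxtimes N$ — it only produces \emph{some} lift, and your obstruction class $\bar z\in\bigl((M/U_1)\boxtimes L'\bigr)/(\bar m\otimes L')$ is an element-level, not module-level, invariant. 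Note also that $\bar z$ is computed from $\tilde s-m\otimes n_0$, which does \emph{not} lie in $S$, so you cannot constrain it by locating the image of $S$ in $(M/U_1)\boxtimes L'$. What actually makes the lifting plausible is the representation-theoretic fact that, since $L'$ has multiplicity one in $N$ and sits in $\Soc N$, every submodule $V\subseteq N$ either contains $L'$ or does not admit $L'$ as a composition factor at all; but converting this into a proof of the lifting for an \emph{arbitrary} submodule $S$ (you cannot assume $S$ is a sum of rectangles — that is the conclusion) requires a substantive argument that is missing.

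For comparison, the paper avoids any lifting problem. Assuming (say) $M$ multiplicity free and $S$ finite dimensional, it takes an arbitrary local submodule $T\subseteq S$ (one with simple head $L_1\boxtimes L_2$), uses multiplicity-freeness of $M$ to see that $L_1$ is a composition factor neither of $M/U$ nor of $\mathrm{Rad}(U)$, where $U\subseteq M$ is the submodule with $\Head(U)\simeq L_1$, and from this traps $T$ first inside $U\boxtimes N$ and then, by a strictly decreasing chain $V_0=N\supset V_1\supset\cdots$, inside a rectangle $U\boxtimes V_n$ which it shows equals $T$. Since $S$ is the sum of its local submodules, $S$ is a sum of rectangles. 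The lesson is that the multiplicity-free hypothesis is most efficiently exploited on the head (or socle) of a local submodule of $S$ itself, rather than through an extension-class computation on the quotient $S_2$. If you want to salvage your induction, the honest route is to prove the lifting lemma directly — for instance by reducing to the case where $\pi(S)$ is a single cyclic rectangle $Am\boxtimes B\bar n$ and analysing $S\cap\pi^{-1}(Am\boxtimes B\bar n)$ — but as written the proposal establishes the proposition only modulo an unproved claim.
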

\begin{proof}
Since any module over a finite dimensional algebra is the sum of its finite submodules, it suffices to prove proposition \ref{prop-submodules} when all modules have finite dimension.
Assume for example that $M$ is multiplicity free, and fix a submodule $S\subset M\boxtimes N$.

Let $T$ be a submodule of $S$ such that $T/\mathrm{Rad}(T)\simeq L_1\boxtimes L_2$ is simple.
There is a submodule $U\subset M$ such that $\Head(U)\simeq L_1$. We claim that $T\subset U\boxtimes N$. Indeed, since $M$ is multiplicity free, $L_1\boxtimes L_2$ is not a composition factor of $(M/U)\boxtimes N$. Since $\Head(T)=L_1\boxtimes L_2$, no nontrivial homomorphic image of $T$ can be contained in $(M/U)\boxtimes N$. Thus $T\subset U\boxtimes N$.

We are now going to construct a strictly decreasing sequence of modules $V_0=N\supset V_1\supset \dots \supset V_n$ such that $U\boxtimes V_n=T$. Assume that $V_i$ is constructed, such that $T\subset U\boxtimes V_i$. If the inclusion is an equality then the construction is finished. Otherwise, the canonical map $\phi:\Head(T)\to \Head(U\boxtimes V_i)$ is not sujective. By lemma \ref{lm-head}, $\Head(U\boxtimes V_i)=\Head(U)\boxtimes \Head(V_i)$ and by using the K\"unneth formula, we see that all submodules of $\Head(U\boxtimes V_i)$ are of the form $\Head(U)\boxtimes W$ where $W$ is a submodule of $\Head(V_i)$. In particular the image of $\phi$ is of the form $\mathrm{Im}\,\phi=\Head(U)\boxtimes W_{\phi}$. 
The inverse image of $\mathrm{Im}\,\phi$ by the quotient map $\pi\boxtimes \pi_i: U\boxtimes V_i\twoheadrightarrow \Head(U)\boxtimes \Head(V_i)$ is $\mathrm{Rad}(U)\boxtimes V_i+U\boxtimes \pi_i^{-1}(W_\phi)$. This submodule of $U\boxtimes V_i$ which contains $T$. But $\Head(T)\simeq L_1\boxtimes L_2$ is not a composition factor of
$$\mathrm{Rad}(U)\boxtimes (V_i/\pi_i^{-1}(W_\phi))\simeq \frac{\mathrm{Rad}(U)\boxtimes V_i + U\boxtimes \pi_i^{-1}(W_\phi)}{U\boxtimes \pi_i^{-1}(W_\phi)}\;.$$
Thus $T$ is actually a submodule of $U\boxtimes \pi_i^{-1}(W_\phi)$. We define $V_{i+1}:=\pi_i^{-1}(W_\phi)$. Since $\phi$ is not surjective, $V_{i+1}$ is a strict submodule of $V_i$ and $T\subset U\boxtimes V_{i+1}$. Since $V_0=N$ has finite dimension, we cannot indefinitely repeat this construction and decrease the dimension of the submodules $V_i$, hence there must be an integer $n$ such that $U\boxtimes V_n=T$.

We have proved so far that all submodules of $T\subset S$ with simple head are of the form $U\boxtimes V$ for some submodules $U\subset M$ and $V\subset N$. But for each composition factor $L_\alpha$ of $S$ we can find a $T_\alpha$ with $T_\alpha/\mathrm{Rad}(T_\alpha)=L_\alpha$. Then $S=\sum T_\alpha =\sum U_\alpha\boxtimes V_\alpha$ and we are done.
\end{proof}

The submodule lattice of multiplicity free modules can be described in terms of certain oriented diagrams \cite{Alperin}. To be more specific, the diagram $D(M)$ associated to a module $M$ has the composition factors of $M$ as vertices, and there is an edge $L\to L'$ if and only if there is a submodule $U\subset M$ such that $\Head(U)\simeq L$ and $L'$ is a homomorphic image of $\mathrm{Rad}(U)$ (such a module $U$ is unique \cite[Lm 4]{Alperin}). The following proposition describes the diagrams of tensor products $M\boxtimes N$.
\begin{proposition}
The tensor product $M\boxtimes N$ is multiplicity free if and only if both $M$ and $N$ are multiplicity free. If this happens, then the vertices of $D(M\boxtimes N)$ are the tensor products $L_1\boxtimes L_2$ where $L_1$, resp. $L_2$ is a composition factor of $M$, resp. $N$. Moreover, there is an edge $L_1\boxtimes L_2\to L_1'\boxtimes L_2'$ if and only if either $L_1=L_1'$ and there is an edge $L_2\to L'_2$ in $D(N)$ or if $L_2=L_2'$ and there is an edge $L_1\to L'_1$ in $D(M)$.
\end{proposition}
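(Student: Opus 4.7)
The plan is to handle the three assertions in turn. For the first two, I would observe that by Proposition \ref{prop-simple} the composition factors of $M\boxtimes N$ are the tensor products $L_1\boxtimes L_2$ of composition factors $L_1$ of $M$ and $L_2$ of $N$, with multiplicities equal to the product of the corresponding multiplicities in $M$ and $N$ (this can be read off Proposition \ref{prop-socle}, or by tensoring a composition series of $M$ with one of $N$). This gives simultaneously the multiplicity-free equivalence and the vertex description of $D(M\boxtimes N)$.

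For the edge description, assume now that both $M$ and $N$ are multiplicity free. The first key step is to identify, for each pair $(L_1,L_2)$, the unique submodule $T\subset M\boxtimes N$ with $\Head(T)=L_1\boxtimes L_2$. Let $U\subset M$ and $V\subset N$ be the unique submodules with $\Head(U)=L_1$ and $\Head(V)=L_2$ (Alperin's uniqueness lemma for multiplicity-free modules). Lemma \ref{lm-head} gives $\Head(U\boxtimes V)=L_1\boxtimes L_2$, so by uniqueness of $T$ we have $T\subset U\boxtimes V$. To prove equality I would rerun the iterative ``peeling'' argument from the proof of Proposition \ref{prop-submodules}: it shows that any submodule with simple head $L_1\boxtimes L_2$ has the form $U'\boxtimes V'$ for some submodules $U'\subset M$, $V'\subset N$, and then Lemma \ref{lm-head} forces $\Head(U')=L_1$, $\Head(V')=L_2$, so uniqueness gives $U'=U$ and $V'=V$. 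I expect this reduction to $T=U\boxtimes V$ to be the main obstacle, since it requires tracking how the peeling argument interacts with both factors of the tensor product.

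Once $T=U\boxtimes V$ is established, the computation of $\mathrm{Rad}(T)$ is direct. The canonical surjection $U\boxtimes V\twoheadrightarrow L_1\boxtimes L_2$ has kernel $\mathrm{Rad}(U)\boxtimes V+U\boxtimes\mathrm{Rad}(V)$, which fits in the short exact sequence
\begin{equation*}
0\to \mathrm{Rad}(U)\boxtimes\mathrm{Rad}(V)\to \mathrm{Rad}(U)\boxtimes V\,\oplus\, U\boxtimes\mathrm{Rad}(V)\to \mathrm{Rad}(T)\to 0\;.
\end{equation*}
Every simple quotient of $\mathrm{Rad}(T)$ is therefore a simple quotient of the direct sum in the middle, hence of one of its two summands. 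By Lemma \ref{lm-head} the heads of these summands are $\Head(\mathrm{Rad}(U))\boxtimes L_2$ and $L_1\boxtimes\Head(\mathrm{Rad}(V))$, and this yields the $(\Rightarrow)$ direction of the edge description. For $(\Leftarrow)$, assume by symmetry an edge $L_1\to L'_1$ in $D(M)$ witnessed by a surjection $p:\mathrm{Rad}(U)\twoheadrightarrow L'_1$; letting $q:V\twoheadrightarrow L_2$ be the canonical projection, the maps $p\boxtimes q$ on $\mathrm{Rad}(U)\boxtimes V$ and $0$ on $U\boxtimes\mathrm{Rad}(V)$ agree on the intersection $\mathrm{Rad}(U)\boxtimes\mathrm{Rad}(V)$ (because $q$ kills $\mathrm{Rad}(V)$), so they glue to a surjection $\mathrm{Rad}(T)\twoheadrightarrow L'_1\boxtimes L_2$ witnessing the desired edge in $D(M\boxtimes N)$.
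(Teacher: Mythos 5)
Your proposal is correct and follows essentially the same route as the paper: identify the unique submodule of $M\boxtimes N$ with head $L_1\boxtimes L_2$ as $U\boxtimes V$, compute its radical as $\mathrm{Rad}(U)\boxtimes V+U\boxtimes\mathrm{Rad}(V)$, and finish with the K\"unneth formula. The one remark is that the step you flag as the main obstacle is not one: since $M\boxtimes N$ is multiplicity free and $\Head(U\boxtimes V)=L_1\boxtimes L_2$ by Lemma \ref{lm-head}, Alperin's uniqueness lemma applied directly to $M\boxtimes N$ gives $T=U\boxtimes V$ at once, so no rerun of the peeling argument from Proposition \ref{prop-submodules} is needed.
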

\begin{proof}
We only prove the statement about the edges of $D(M\boxtimes N)$. Let $L_1\boxtimes L_2$ be a composition factor of $M\boxtimes N$ and let $U\subset M$ such that $\Head(U)=L_1$ and $V\subset N$ such that $\Head(V)=L_2$. Then by lemma \ref{lm-head}, $\Head(U\boxtimes V)=L_1\boxtimes L_2$. Thus there is an edge $L_1\boxtimes L_2\to L_1'\boxtimes L_2'$ if and only if $L'_1\boxtimes L'_2$ is a homomorphic image of $\mathrm{Rad}(U)\boxtimes V+U\boxtimes \mathrm{Rad}(V)$, that is if and only if $\hom_{A\boxtimes B}(\mathrm{Rad}(U)\boxtimes V,L'_1\boxtimes L'_2)\ne 0$ or $\hom_{A\boxtimes B}(U\boxtimes \mathrm{Rad}(V),L'_1\boxtimes L'_2)\ne 0$. By the K\"unneth formula, the first condition is equivalent to the fact that $L_1'$ is a homomorphic  image of $\mathrm{Rad}(U)$ and that $L'_2=L_2$ while the second one is equivalent to the fact that $L_2'$ is a homomorphic  image of $\mathrm{Rad}(V)$ and that $L'_1=L_1$.
\end{proof}

\begin{proposition}\label{prop-localizing}
Let $L$ be a simple $A$-module satisfying $\Ext^1_A(L,L)=0$, let $\C$ be a localizing and colocalizing subcategory of $B$-Mod, and let $L\boxtimes \C$ denote the full subcategory of $A\otimes B$-Mod whose objects are isomorphic to tensor products of the form $L\boxtimes M$ where $M$ is an object of $\C$. Then: 
\begin{enumerate}
\item[(i)] $L\boxtimes \C$ is a localizing and colocalizing subcategory of $A\otimes B$-Mod.
\item[(ii)] tensor product by $L$ induces an equivalence of categories $\C\simeq L\boxtimes \C$.
\end{enumerate}
\end{proposition}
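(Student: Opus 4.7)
My plan is to settle (ii) first and then use it in (i). For (ii), the functor $L \boxtimes - : \C \to L \boxtimes \C$ is essentially surjective by construction, and fully faithful by the degree-zero K\"unneth isomorphism (which applies since $L$ is finite-dimensional): $\hom_{A\otimes B}(L\boxtimes M, L \boxtimes M') \simeq \End_A(L) \otimes \hom_B(M, M') \simeq \hom_B(M, M')$, using $\End_A(L) = \kk$, which holds because $\kk$ is a splitting field for $A$ and $L$ is simple.

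For (i), closure of $L \boxtimes \C$ under arbitrary direct sums and direct products is immediate, because tensoring with the finite-dimensional vector space underlying $L$ commutes with each. Closure under subobjects and quotients comes from proposition \ref{prop-submodules}: since $L$ is simple, hence multiplicity-free, any submodule $S \subseteq L \boxtimes M$ has the form $\sum_\alpha U_\alpha \boxtimes V_\alpha$, and simplicity of $L$ forces each $U_\alpha \in \{0, L\}$, so $S = L \boxtimes V$ with $V = \sum_{U_\alpha = L} V_\alpha$, an object of $\C$ by closure of $\C$ under sums and subobjects. Any quotient of $L \boxtimes M$ is then of the form $L \boxtimes (M/V)$ with $M/V \in \C$.

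The main step, and the one where the hypothesis $\Ext^1_A(L, L) = 0$ is essential, is closure under extensions. Here I would apply the K\"unneth isomorphism in degree one (which applies since $L$ is finite-dimensional) to obtain
$$\Ext^1_{A\otimes B}(L \boxtimes M', L \boxtimes M) \simeq \End_A(L) \otimes \Ext^1_B(M', M) \;\oplus\; \Ext^1_A(L,L) \otimes \hom_B(M', M).$$
The second summand vanishes by hypothesis, and the first reduces to $\Ext^1_B(M', M)$ via $\End_A(L) = \kk$; moreover this composite isomorphism is precisely the map induced by the functor $L \boxtimes -$, since the generator of $\End_A(L)$ is $\mathrm{Id}_L$ and cupping with it corresponds to applying $L \boxtimes -$. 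Consequently, any extension $0 \to L \boxtimes M \to E \to L \boxtimes M' \to 0$ in $A \otimes B$-Mod is isomorphic to the image under $L \boxtimes -$ of some extension $0 \to M \to N \to M' \to 0$ in $B$-Mod; closure of $\C$ under extensions yields $N \in \C$, hence $E \simeq L \boxtimes N \in L \boxtimes \C$. This K\"unneth-plus-vanishing identification of $\Ext^1$ is the one genuinely nontrivial ingredient; the rest of the proof reduces cleanly to proposition \ref{prop-submodules} and formal properties of the tensor product.
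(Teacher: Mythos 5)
Your proposal is correct and follows essentially the same route as the paper's proof: direct sums and products via finite-dimensionality of $L$, subobjects and quotients via proposition \ref{prop-submodules} together with the simplicity of $L$, and extensions via the degree-one K\"unneth isomorphism combined with $\Ext^1_A(L,L)=0$ and $\End_A(L)=\kk$. Your extra remark that the composite isomorphism on $\Ext^1$ is induced by the functor $L\boxtimes -$ makes explicit a point the paper leaves implicit, but the argument is the same.
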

\begin{proof}
Statement (ii) follows from the K\"unneth formula and the fact that $\End_A(L)=\kk$. Let us prove (i). The stability of $L\boxtimes \C$ by arbitrary direct sums is obvious, and since $L$ is finite dimensional the canonical morphism $L\boxtimes \prod M_i\to \prod L\boxtimes M_i$ is an isomorphism, which prove the stability by direct products. If $S\subset L\boxtimes N$ then $S=\sum U_\alpha\boxtimes V_\alpha$ by proposition \ref{prop-submodules}. But the only nonzero submodule of $L$ is $L$ itself so that $S=\sum L\boxtimes V_\alpha \simeq L\boxtimes (\sum V_\alpha)$ is an object of $L\boxtimes \C$. The stability by quotients follows from the stability by subobjects. Finally, since $\Ext^1_A(L,L)=0$ and $\End_A(L)=\kk$, the K\"unneth formula shows that $\Ext^1_{A\otimes B}(L\boxtimes N,L\boxtimes N')$ is isomorphic to $\Ext^1_B(N,N')$. Thus, all extensions of $L\boxtimes N$ by $L\boxtimes N'$ are of the form $L\boxtimes E$ where $E$ is an extension of $N$ by $N'$. Hence $L\boxtimes \C$ is stable by extensions.
\end{proof}

\section{On theorems of Steinberg and Clausen-James}\label{app-Stein}
In this appendix, we give new proofs of Steinberg tensor product theorem for $GL_n$ and Clausen and James' theorem, based on theorem \ref{thm-1}.

\begin{lemma}\label{lm-crit}
A strict polynomial functor is simple if and only if it is self-dual and its endomorphism ring has dimension one.
\end{lemma}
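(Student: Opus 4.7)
The plan is to handle the two directions separately. The forward direction is immediate from items (2) and (3) of section \ref{subsec-simple}: any simple functor is self-dual and has a one-dimensional endomorphism ring. So all the work is in the converse.

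For the converse, assume $F$ is self-dual with $\End_{\PP_\kk}(F) = \kk\cdot\mathrm{Id}_F$. My first step is to reduce to the homogeneous case. The decomposition $F = \bigoplus_{d\ge 0} F_d$ into homogeneous components produces orthogonal idempotent projections in $\End(F)$; one-dimensionality then forces all but one $F_d$ to vanish. So $F$ is homogeneous, and by the third bullet of section \ref{subsec-recallbasic} it has a nonzero semisimple socle and a nonzero semisimple head.

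The key step is to extract a simple $L$ that is \emph{both} a subfunctor and a quotient of $F$. Pick any simple summand $L\subset \Soc(F)$, giving $\iota:L\hookrightarrow F$. Applying the (pointwise-exact, contravariant) duality $^\sharp$ converts the inclusion $\iota$ into a surjection $F^\sharp\twoheadrightarrow L^\sharp$; using $F^\sharp\simeq F$ and the self-duality $L^\sharp\simeq L$ from item (2) of section \ref{subsec-simple}, I obtain a surjection $\pi:F\twoheadrightarrow L$. Now the composite $\iota\circ\pi\in\End(F)$ equals $\lambda\cdot\mathrm{Id}_F$ for some scalar $\lambda\in\kk$. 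Its image contains $\iota(L)=L\neq 0$, so $\lambda\neq 0$, whence $\iota\circ\pi$ is an isomorphism. This forces $\iota$ to be surjective, so $\iota$ is an isomorphism and $F\simeq L$ is simple.

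I do not anticipate any genuine obstacle: the argument reduces entirely to the recalled facts about socles, heads, and self-duality of simples. The only point requiring a little care is the compatibility of $^\sharp$-duality with inclusions in the possibly infinite-dimensional setting of section \ref{subsec-recallbasic}, but since $^\sharp$ is defined pointwise by the exact functor $\hom_\kk(-,\kk)$, exactness is automatic and simple targets like $L$ have finite-dimensional values, so $L^\sharp\simeq L$ is safe.
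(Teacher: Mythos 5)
Your proof is correct and follows essentially the same route as the paper: the necessity is quoted from the recalled facts, and the sufficiency comes from observing that the composite $F\simeq F^\sharp\twoheadrightarrow L^\sharp\simeq L\hookrightarrow F$ is a nonzero endomorphism, hence a nonzero scalar multiple of the identity, forcing $L=F$. Your preliminary reduction to the homogeneous case (to guarantee a nonzero socle) is a small extra care step that the paper leaves implicit, but the core argument is identical.
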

\begin{proof}
The condition is necessary by facts \ref{item2} and \ref{item3} from section \ref{subsec-simple}. We prove it is sufficient. Let $L$ be a simple subfunctor of $F$. The composite $F\simeq F^\sharp\twoheadrightarrow L^\sharp\simeq L\hookrightarrow F$ is a nonzero endomorphism of $F$.
Since the endomorphism ring of $F$ has dimension one, this morphism must be a nonzero multiple of the identity, hence an isomorphism. Thus one must have $L=F$.
\end{proof}

\begin{proposition}[Weak Steinberg theorem]\label{prop-weak}
Let $r\ge 0$, let $L_1$ be a left $p^r$-bounded simple functor, and let $L_2$ be any simple functor. Then $L_1\otimes L_2^{(r)}$ is simple. 
\end{proposition}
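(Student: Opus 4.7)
The plan is to apply the simplicity criterion of Lemma \ref{lm-crit}: it suffices to check that $L_1 \otimes L_2^{(r)}$ is self-dual and that its endomorphism ring is one-dimensional.

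For self-duality, I will use the fact that duality $^\sharp$ commutes with tensor products and with Frobenius twists, together with the self-duality of simple functors (item \ref{item2} of section \ref{subsec-simple}). This yields
\[
\bigl(L_1 \otimes L_2^{(r)}\bigr)^\sharp \;\simeq\; L_1^\sharp \otimes \bigl(L_2^\sharp\bigr)^{(r)} \;\simeq\; L_1 \otimes L_2^{(r)}\;,
\]
which is immediate.

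The computation of the endomorphism ring is the crucial step, and it is exactly where Corollary \ref{thm-1bis} comes into play. Since $L_1$ is simple and left $p^r$-bounded, Corollary \ref{cor-bounded} tells us that $\Soc(L_1) = L_1$ is a direct sum of $p^r$-restricted simples, so $L_1$ itself is $p^r$-restricted. Taking the quadruple $(F,G,X,Y) = (L_1, L_1, L_2, L_2)$ (which trivially satisfies the K\"unneth condition, since simple functors are finite by item 1 of section \ref{subsec-recallbasic}) and observing that $\deg F = \deg G$, both conditions (C1) and (C2) of Corollary \ref{thm-1bis} hold. Hence the $\hom$-level cup product
\[
\hom_{\PP_\kk}(L_1,L_1) \otimes \hom_{\PP_\kk}(L_2,L_2) \;\xrightarrow{\simeq}\; \hom_{\PP_\kk}\bigl(L_1 \otimes L_2^{(r)},\, L_1 \otimes L_2^{(r)}\bigr)
\]
is an isomorphism. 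By item \ref{item3} of section \ref{subsec-simple}, the left-hand side is $\kk \otimes \kk = \kk$, so the endomorphism ring is one-dimensional.

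Applying Lemma \ref{lm-crit} then concludes the proof. The only real obstacle is confirming that the hypotheses of Corollary \ref{thm-1bis} genuinely cover this case — and they do, precisely because $L_1$ being simple and left $p^r$-bounded forces it to be $p^r$-restricted, giving both the socle and head conditions simultaneously.
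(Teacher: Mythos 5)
Your overall skeleton---check self-duality, compute the endomorphism ring via a degree-zero cup product isomorphism, conclude by Lemma \ref{lm-crit}---is exactly the paper's. The self-duality step is fine. The problem is the route you take to the endomorphism computation: it is circular in the context where Proposition \ref{prop-weak} lives. This proposition is a step in Appendix \ref{app-Stein}, whose stated purpose is to give a \emph{new} proof of the Steinberg tensor product theorem and of Clausen--James from Theorem \ref{thm-1}. You invoke Corollary \ref{cor-bounded} (to pass from ``left $p^r$-bounded'' to ``$p^r$-restricted'') and then Corollary \ref{thm-1bis} (whose conditions (C1), (C2) are themselves translated back into $p(F,r)>0$, $i(G,r)>0$ via Corollary \ref{cor-bounded}). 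But Corollary \ref{cor-bounded} is a consequence of Proposition \ref{prop-eq}, whose proof explicitly uses the classical Steinberg tensor product theorem and Clausen--James. So your argument assumes the theorem the appendix is re-proving. Note also that the hypothesis of the proposition is deliberately phrased as ``left $p^r$-bounded'' rather than ``$p^r$-restricted'' precisely to avoid needing that (nontrivial) equivalence at this stage; converting back to ``$p^r$-restricted'' defeats the point.

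The repair is short and is what the paper does: by the remark following Definition \ref{def-pi}, ``$L_1$ is left $p^r$-bounded'' \emph{means} $p(L_1,r)>0$, no translation needed. Apply Theorem \ref{thm-1} directly to the quadruple $(L_1,L_1,L_2,L_2)$ (K\"unneth condition holds since simples are finite): as $\deg F=\deg G$ and $0<p(L_1,r)+i(L_1,r)$, the cup product
\[
\hom_{\PP_\kk}(L_1,L_1)\otimes\hom_{\PP_\kk}(L_2^{(r)},L_2^{(r)})\xrightarrow{\;\simeq\;}\End_{\PP_\kk}\bigl(L_1\otimes L_2^{(r)}\bigr)
\]
is an isomorphism in degree $0$. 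The left factor is $\kk$ by item \ref{item3} of section \ref{subsec-simple}, and $\End_{\PP_\kk}(L_2^{(r)})\simeq\End_{\PP_\kk}(L_2)=\kk$ by the degree-zero Frobenius-twist isomorphism of section \ref{subsec-frob-twist} (not because $L_2^{(r)}$ is known to be simple---that is what is being proved). With that substitution your proof is correct and free of circularity.
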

\begin{proof}
Self-duality of $L_1$, $L_2$ and $I^{(r)}$ and general properties of duality imply that $L_1\otimes L_2^{(r)}$ is self dual. Moreover, since $L_1$ is left $p^r$-bounded, theorem \ref{thm-1} yields an isomorphism:
$$\End_{\PP_\kk}(L_1\otimes L_2^{(r)})\simeq \End_{\PP_\kk}(L_1)\otimes \End_{\PP_\kk}(L_2)\simeq \kk\otimes\kk=\kk\;. $$
Hence $L_1\otimes L_2^{(r)}$ is simple by lemma \ref{lm-crit}.
\end{proof}

Our next task is to prove that the $p$-restricted simple functors are left $p$-bounded. Our proof will use the following proposition, which extends the classification of additive strict polynomial functors proved in \cite{TouzeControl}. 
\begin{proposition}\label{prop-class}
Let $F\in\PP_{d_0,d_1,\dots,d_n,\kk}$ be a strict polynomial functor with $1+n$ variables, such that $F$ is nonzero and additive with respect to each of the last $n$ variables. Let $G$ be the strict polynomial functor defined by $G(V)=F(V,\kk,\dots,\kk)$. Then the $d_i$s, $1\le i\le n$ are powers of $p$, i.e.  $d_i=p^{r_i}$ and there is an isomorphism:
$$F\simeq G\boxtimes I^{(r_1)}\boxtimes\dots\boxtimes I^{(r_n)}\;.$$
\end{proposition}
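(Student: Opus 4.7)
The plan is to argue by induction on $n$, bootstrapping from the classification of additive strict polynomial functors of one variable established in \cite{TouzeControl}. The base case $n=0$ is vacuous: there is no additivity condition and $F=G$. The heart of the argument is the case $n=1$, i.e. a nonzero strict polynomial bifunctor $F$ of bidegree $(d_0, d_1)$ that is additive in the second variable. For each vector space $V$, the functor $W\mapsto F(V,W)$ is an additive strict polynomial functor of degree $d_1$, so the one-variable classification forces either $F(V,-)=0$ or $d_1$ to be a power of $p$, say $p^{r_1}$, with $F(V,-)\simeq A_V\otimes I^{(r_1)}$ and $A_V$ canonically identified with $F(V,\kk)=G(V)$. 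Non-vanishing of $F$ ensures $d_1 = p^{r_1}$.

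To upgrade these pointwise identifications into a natural isomorphism of bifunctors, I would reformulate the one-variable classification as asserting that evaluation at $\kk$ gives an equivalence of categories between additive strict polynomial functors of degree $p^{r_1}$ and $\kk$-vector spaces, with quasi-inverse $A\mapsto A\otimes I^{(r_1)}$. Applying this equivalence in the second variable turns the datum of $F$ into the datum of a strict polynomial functor of a single variable of degree $d_0$, namely $V\mapsto F(V,\kk)=G(V)$, and the inverse correspondence recovers $G\boxtimes I^{(r_1)}$. This yields the desired isomorphism $F\simeq G\boxtimes I^{(r_1)}$.

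The inductive step from $n-1$ to $n$ is obtained by repackaging: view $F$ as a bifunctor in the grouped variable $(V_0,\dots,V_{n-1})$ and in $V_n$. It is additive in $V_n$, and the $n=1$ argument adapts verbatim (the parameter category is just a different source category) to yield $d_n=p^{r_n}$ and a natural isomorphism $F\simeq G'\boxtimes I^{(r_n)}$, where $G'(V_0,\dots,V_{n-1}):=F(V_0,\dots,V_{n-1},\kk)$. Additivity of $G'$ in each of its last $n-1$ variables is inherited from $F$ (since setting $V_n=\kk$ preserves it), so the induction hypothesis applied to $G'$ gives $d_i=p^{r_i}$ for $1\le i\le n-1$ together with $G'\simeq G\boxtimes I^{(r_1)}\boxtimes\dots\boxtimes I^{(r_{n-1})}$. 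Composing the two isomorphisms finishes the proof.

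The main obstacle I anticipate is the naturality step inside the $n=1$ case. The pointwise identification $F(V,-)\simeq G(V)\otimes I^{(r_1)}$ is essentially immediate from the classification theorem, but assembling the family $\{A_V\simeq G(V)\}$ into a morphism of strict polynomial bifunctors requires that evaluation at $\kk$ on additive strict polynomial functors of fixed degree $p^{r_1}$ be an equivalence of $\kk$-linear categories in a way that is compatible with the polynomial structure on morphism sets, i.e.\ that the natural identification $\hom(A\otimes I^{(r_1)},B\otimes I^{(r_1)})\simeq\hom_\kk(A,B)$ be functorial in $A$ and $B$ at the level of strict polynomial maps. This should follow directly from the classification theorem of \cite{TouzeControl}, but it is the delicate point where one must track the strict polynomial structure carefully rather than treat $I^{(r_1)}$ merely as a functor between vector spaces.
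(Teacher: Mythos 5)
Your proposal is correct, but it reaches the conclusion by a genuinely different route from the paper. Both arguments peel off the last variable and induct on $n$; the difference is in how that single step is carried out. You invoke the one-variable classification of additive functors from \cite{TouzeControl} as a black box and apply it fiberwise, whereas the paper re-runs a presentation argument directly in the multivariable setting: it writes $F$ as a quotient of projectives $P\boxtimes\Gamma^{\mu}$, uses additivity in the last variable together with the vanishing theorem \cite[Thm 2.13]{FS} to discard every $\mu$ with more than one nonzero entry and to kill the image of the multiplication maps $\Gamma^{k}\otimes\Gamma^{d_n-k}\to\Gamma^{d_n}$, identifies the cokernel of multiplication with $I^{(r_n)}$ (which forces $d_n=p^{r_n}$), and finally produces an explicit right-exact sequence $Q\boxtimes I^{(r_n)}\to P\boxtimes I^{(r_n)}\to F\to 0$ exhibiting $F\simeq\overline{F}\boxtimes I^{(r_n)}$. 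The delicate point you isolate --- promoting the pointwise isomorphisms $F(V,-)\simeq G(V)\otimes I^{(r_1)}$ to an isomorphism of bifunctors --- is precisely what the paper's presentation sidesteps, and your proposed fix is the right one: since $I^{(r)}$ is a finite simple functor with $\End_{\PP_\kk}(I^{(r)})=\kk$, one gets $\hom_{\PP_\kk}(A\otimes I^{(r)},B\otimes I^{(r)})\simeq\hom_\kk(A,B)$, so evaluation at $\kk$ is a $\kk$-linear equivalence from additive degree-$p^r$ functors to vector spaces, and a bifunctor additive in its second variable is exactly a strict polynomial functor in the first variable valued in that subcategory; transporting along the equivalence yields $G$ and the natural isomorphism $F\simeq G\boxtimes I^{(r_1)}$. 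Your route is shorter and more conceptual, but it leans on the one-variable classification in this strengthened categorical form (essential surjectivity plus the Hom computation above), which you should make explicit; the paper's route is longer but self-contained, needing only the projective generators and \cite[Thm 2.13]{FS}, and in effect reproves the classification in the relative setting.
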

\begin{proof}
By induction, we can reduce ourselves to proving that $d_n=p^{r_n}$ and that $F$ is isomorphic to $\overline{F}\boxtimes I^{(r_n)}$
where $\overline{F}(V_0,\dots,V_{n-1}):= F(V_0,\dots,V_{n-1},\kk)$. The functors with $n+1$ variables of the form $P\boxtimes \Gamma^\mu$ where $P$ is a projective functor with $n$ variables, homogeneous of multidegree $(d_0,\dots,d_{n-1})$ and $\mu=(\mu_1,\dots,\mu_k)$ is a tuple with $\sum\mu_i=d_n$ form a projective generator of  $\PP_{d_0,d_1,\dots,d_n,\kk}$, thus $F$ is a quotient of a direct sum $\bigoplus P_i\boxtimes \Gamma^{\mu^i}$.

Observe that if $\mu$ has more than one nonzero coefficient, then there are no nonzero morphisms from a functor of the form $P\boxtimes \Gamma^{\mu}$ to $F$. Indeed, for some $n$-tuple $\overline{V}=(V_0,\dots,V_{n-1})$, such a nonzero morphism would induce a nonzero morphism of strict polynomial functors from $P(\overline{V})\otimes\Gamma^{\mu}(-)$ to the additive functor $F(\overline{V},-)$. This would contradict \cite[Thm 2.13]{FS}.

In particular, $F$ is in fact a quotient of $\bigoplus P_i\boxtimes \Gamma^{d_n}= P\boxtimes \Gamma^{d_n}$ with $P=\bigoplus P_i$. And moreover the following composite is zero, in which $\phi=P\boxtimes \mathrm{mult}$ where `$\mathrm{mult}$' refers to the multiplication of the divided power algebra:
$$\bigoplus_{k=1}^{d_n} P\boxtimes (\Gamma^k\otimes \Gamma^{d_n-k})\xrightarrow[]{\phi} P\boxtimes\Gamma^{d_n}\to F \;.$$
Hence $F$ is a quotient of $P\boxtimes (\mathrm{Coker}\,\phi)$. But $\mathrm{Coker}\,\phi$ is nonzero if and only if $d_n=p^{r_n}$ for some $r_n$, and in this case it is equal to $I^{(r_n)}$. Thus $d_n=p^{r_n}$, and we have a surjective map $\psi:P\boxtimes I^{(r_n)}\twoheadrightarrow F$. By replacing the last variable by $\kk$, we obtain a surjective map $\overline{\psi}:P\twoheadrightarrow \overline{F}$. We then take a projective functor with $n$ variables $Q$ and a map $\chi: Q\to P$ whose image is $\mathrm{Ker}\,\overline{\psi}$. Then using additivity with respect with the last variable, one sees that we have a right exact sequence:
$$Q\boxtimes I^{(r_n)}\xrightarrow[]{\chi\boxtimes I^{(r)}} P\boxtimes I^{(r_n)}\xrightarrow[]{\psi} F\to 0\;.$$
This implies that $F$ is isomorphic to $\overline{F}\boxtimes I^{(r_n)}$.
\end{proof}
\begin{corollary}\label{cor-quot}
If $L$ is a simple functor, there exists nonnegative integers $d_0,\dots,d_r$ such that $L$ is a quotient of the functor
$T^{(d_0,\dots,d_r)}= \bigotimes_{0\le i\le r}(\otimes^{d_i})^{(i)}$.
\end{corollary}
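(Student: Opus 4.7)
\emph{Proof proposal.} The plan is to reduce this to proposition \ref{prop-class} via a maximality argument on multidegree components, and then use the sum--diagonal adjunction to convert the resulting multilinear approximation of $L$ into a surjection onto $L$.

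If $\deg L = 0$ then $L \simeq \kk = T^{(0)}$ and there is nothing to prove. Otherwise, I will set $d := \deg L > 0$ and consider, for each $n\ge 1$, the multifunctor
$$\beta_n(V_1,\dots,V_n) := L(V_1 \oplus \dots \oplus V_n)$$
in $\PP_{d,\kk}(n)$. It decomposes by multidegree as $\beta_n = \bigoplus \beta_n^{(e_1,\dots,e_n)}$, indexed by tuples with $\sum e_i = d$. I will choose $n$ to be maximal such that the component $B := \beta_n^{(e_1,\dots,e_n)}$ is nonzero for some tuple with all $e_i>0$; such $n$ exists since $\beta_1^{(d)} = L$.

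The key step will be to show that $B$ is additive in each variable. Splitting the first variable as $V_1 = V'_1 \oplus V''_1$ and decomposing by bidegree in that variable gives
$$B(V'_1\oplus V''_1,V_2,\dots,V_n) = \bigoplus_{a+b=e_1} \beta_{n+1}^{(a,b,e_2,\dots,e_n)}(V'_1,V''_1,V_2,\dots,V_n),$$
and by the maximality of $n$ the summands with $0<a<e_1$ vanish, leaving only $B(V'_1,V_2,\dots,V_n)\oplus B(V''_1,V_2,\dots,V_n)$. Symmetry handles the remaining variables. Proposition \ref{prop-class}, applied with the first variable treated as ``free'' and combined with the observation that the resulting one-variable functor $G(V) = B(V,\kk,\dots,\kk)$ is itself additive (hence of the form $W\otimes I^{(r_1)}$ for some finite-dimensional vector space $W$), will then yield integers $r_1,\dots,r_n \ge 0$ with $e_i = p^{r_i}$ and an isomorphism
$$B \;\simeq\; W \otimes \bigl(I^{(r_1)}\boxtimes\cdots\boxtimes I^{(r_n)}\bigr).$$

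To finish, the inclusion of the multidegree summand $B\hookrightarrow \beta_n$ is nonzero, and by iterated sum--diagonal adjunction (section \ref{subsec-cross}) corresponds to a nonzero morphism $B_\Delta \to L$. Setting $d_j := |\{i : r_i = j\}|$ and $r := \max_i r_i$, one has $B_\Delta \simeq W \otimes T^{(d_0,\dots,d_r)}$, so restricting to $\{w\}\otimes T^{(d_0,\dots,d_r)}$ for any nonzero $w\in W$ will give a nonzero map $T^{(d_0,\dots,d_r)}\to L$, which is surjective by simplicity of $L$. I expect the additivity claim to be the most delicate step: it really rests on the observation that splitting a variable of $B$ produces multidegree components of $\beta_{n+1}$ whose non-vanishing would contradict the maximality of $n$.
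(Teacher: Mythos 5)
Your proposal is correct and follows essentially the same route as the paper: extract from $L_{\boxplus_n}$ a nonzero homogeneous summand that is additive in each variable, identify it via Proposition \ref{prop-class} as $W\otimes(I^{(r_1)}\boxtimes\cdots\boxtimes I^{(r_n)})$, and conclude by sum--diagonal adjunction and simplicity of $L$. The only (cosmetic) difference is that you produce the additive summand by a self-contained maximality argument on multidegree components, whereas the paper invokes the Eilenberg--Mac Lane degree of $L$ and cites \cite{TouzeControl}; your maximality argument is exactly what that notion encodes.
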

\begin{proof}
If $L$ has degree zero, then $L$ is the constant functor $\kk$, hence it is a quotient of $T^{(0)}=\otimes^0=\kk$. Assume that $L$ is not constant. Then there exists a positive integer $n$, the Eilenberg-Mac Lane degree of $L$, such that the functor with $n$ variables 
$$L_{\boxplus_n}: (V_1,\dots,V_n)\mapsto L(V_1\oplus\dots\oplus V_n)$$
contains a nonzero homogeneous direct summand $F$ which is additive with respect to each of its variables (see e.g. \cite[section 2]{TouzeControl} for more details on Eilenberg-Mac Lane degrees for strict polynomial functors). By proposition \ref{prop-class}, $F$ is of the form:
$$F=G\boxtimes I^{(r_1)}\boxtimes\dots\boxtimes I^{(r_n)}$$
where $G$ is a homogeneous functor of degree zero, i.e. a constant functor. In particular, $F$ (hence also $L_{\boxplus_n}$) contains $I^{(r_1)}\boxtimes\dots\boxtimes I^{(r_n)}$ as a direct summand.
Thus we have:
$$0\ne \hom_{\PP_\kk(n)}(I^{(r_1)}\boxtimes\dots\boxtimes I^{(r_n)}, L_{\boxplus_n})\simeq \hom_{\PP_{\kk}}(I^{(r_1)}\otimes\dots\otimes I^{(r_n)},L)\;. $$
Since $L$ is simple, any nonzero morphism with target $L$ is surjective. Thus the inequality above proves that $L$ is a quotient of $I^{(r_1)}\otimes\dots\otimes I^{(r_n)}$. By reordering the factors of this tensor product (and using that $(I^{(k)})^{\otimes d_k}= (\otimes^{d_k})^{(k)}$) we obtain the result.
\end{proof}

We now consider two assertions, indexed by a nonnegative integer $k$.
\begin{enumerate}
\item[$A(k)$:] If $L$ is a $p$-restricted functor of degree $d$ with $d\le k$, then $L$ is a quotient of $\otimes^d$.
\item[$B(k)$:] Let $d$ be a nonnegative integer and let $T$ be a homogeneous functor of positive degree $e$. If $d+pe\le k+1$, then no $p$-restricted simple functor occurs as a composition factor of the tensor product $\otimes^d\otimes T^{(1)}$.
\end{enumerate}

\begin{lemma}\label{lm-r0}
Assertion $A(0)$ is true.
\end{lemma}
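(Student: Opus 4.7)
The plan is to observe that assertion $A(0)$ is essentially vacuous given the basic setup recalled in section \ref{subsec-recallbasic} and section \ref{subsec-simple}. If $L$ is a simple functor of degree $d \le 0$, then $L$ is a homogeneous simple functor of degree $0$. By the classification of simple functors via partitions (fact 1 of section \ref{subsec-simple}), the only simple functor of degree $0$ is $L_{(0)} = \kk$, the constant functor.

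On the other hand, by definition $\otimes^0 = \kk$ is also the constant functor of degree $0$. Hence $L \simeq \kk = \otimes^0$, and the identity map exhibits $L$ as a quotient (indeed an isomorphic copy) of $\otimes^d = \otimes^0$. This establishes $A(0)$.

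The only thing worth checking is that the $p$-restriction hypothesis is consistent in this degenerate case: the partition $(0)$ is $p$-restricted by convention (as noted just before proposition \ref{prop-eq}), so there is no additional constraint to verify. There is no genuine obstacle here; the lemma simply serves as the base case for the induction that will presumably be carried out later via the mutually inductive assertions $A(k)$ and $B(k)$.
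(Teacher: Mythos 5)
Your proof is correct and is essentially identical to the paper's: the only simple functor of degree $0$ is the constant functor $\kk=\otimes^0$, so it is trivially a quotient of $\otimes^0$. The extra remark about $(0)$ being $p$-restricted by convention is a harmless sanity check.
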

\begin{proof}
If $L$ is a simple functor of degree $0$, then $L$ is the constant functor $\kk$, hence it is a quotient of $\otimes^0=\kk$.
\end{proof}

\begin{lemma}\label{lm-r1}
If $A(k)$ is true, then $B(k)$ is true.
\end{lemma}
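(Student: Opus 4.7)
My plan is to argue by contradiction. Suppose $L$ is a $p$-restricted simple composition factor of $X := \otimes^d \otimes T^{(1)}$, with $d+pe \le k+1$ and $e \ge 1$. Since composition factors of a homogeneous functor all have the same degree, $\deg L = d+pe$. Because $L$ is a composition factor, the projective cover $P_L$ admits a nonzero morphism to $X$. The strategy is to use $A(k)$ to realize $P_L$ as a direct summand of the projective functor $\otimes^{\deg L}$, and then to show by sum-diagonal adjunction that $\hom_{\PP_\kk}(\otimes^{\deg L}, X) = 0$, yielding the contradiction.

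For the vanishing, I would apply the sum-diagonal adjunction of section~\ref{subsec-cross} twice. First,
\[
\hom_{\PP_\kk}(\otimes^{d+pe}, \otimes^d \otimes T^{(1)}) \;\simeq\; \hom_{\PP_\kk(2)}\bigl((\otimes^{d+pe})_\boxplus^{(d,pe)},\, \otimes^d \boxtimes T^{(1)}\bigr),
\]
whose right side, after extracting the bidegree-$(d,pe)$ component of $(\otimes^{d+pe})_\boxplus$ (namely $\binom{d+pe}{d}$ copies of $\otimes^d \boxtimes \otimes^{pe}$) and applying K\"unneth, factors as $\binom{d+pe}{d}\,\hom_{\PP_\kk}(\otimes^d, \otimes^d) \otimes \hom_{\PP_\kk}(\otimes^{pe}, T^{(1)})$. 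A second application of sum-diagonal adjunction, using $\otimes^{pe} = (I \boxtimes \otimes^{pe-1})_\Delta$, rewrites $\hom_{\PP_\kk}(\otimes^{pe}, T^{(1)})$ as $\hom_{\PP_\kk(2)}(I \boxtimes \otimes^{pe-1}, T^{(1)}_\boxplus)$. Since $T^{(1)}_\boxplus(V,W) = T(V^{(1)} \oplus W^{(1)})$, its bihomogeneous components live only at bidegrees of the form $(pa,pb)$; the source bidegree $(1, pe-1)$ is never of this form (as $p \ge 2$), so the Hom vanishes and hence $\hom_{\PP_\kk}(\otimes^{d+pe}, X) = 0$.

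To close the argument, when $\deg L \le k$, $A(k)$ asserts that $L$ is a quotient of $\otimes^{\deg L}$; since $\otimes^n = \Gamma^{(1,\dots,1)}$ is projective, this forces $P_L$ to be a direct summand of $\otimes^{\deg L} = \otimes^{d+pe}$, whence any nonzero $P_L \to X$ would embed into $\hom_{\PP_\kk}(\otimes^{d+pe}, X) = 0$, contradicting that $L$ is a composition factor. The main obstacle I anticipate is the boundary case $\deg L = k+1$, where $A(k)$ does not directly apply to $L$ itself. I expect this case to be closed either by a more delicate form of the induction that runs $A$ and $B$ in tandem, or by observing that $P_L$ is still a direct summand of some $\Gamma^\nu$ with $\nu$ a $p$-bounded partition of $k+1$ (so that the vanishing $\hom_{\PP_\kk}(\Gamma^\nu, X) = 0$ furnished by lemma~\ref{lm-annul-prelim} applies), perhaps by exploiting corollary~\ref{cor-quot} together with $A(k)$ applied to the Frobenius-untwisted factor in any tensor decomposition of an ambient $T^{(d_0,\dots,d_r)}$ surjecting onto $L$.
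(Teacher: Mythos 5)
Your Hom-vanishing step is correct but is exactly lemma \ref{lm-annul-prelim}: for any $p$-bounded tuple $\nu$ one has $\hom_{\PP_\kk}(\Gamma^\nu,\otimes^d\otimes T^{(1)})=0$, and $\otimes^{d+pe}=\Gamma^{(1,\dots,1)}$ is such a $\Gamma^\nu$. The genuine gap is the boundary case $\deg L=d+pe=k+1$, which you flag but do not close, and it is not a removable technicality: in lemma \ref{lm-r2}, $B(k)$ is applied to $T^{(d_0,\dots,d_r)}=\otimes^{d_0}\otimes\left(\otimes^{d_1}\otimes\dots\otimes(\otimes^{d_r})^{(r-1)}\right)^{(1)}$ with $d_0+pe=k+1$ exactly, so the \emph{only} instance of $B(k)$ that the inductive scheme ever uses is precisely the case your argument does not cover. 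To run your argument there you would need to know that a $p$-restricted simple $L$ of degree $k+1$ is a quotient of $\otimes^{k+1}$ (equivalently that $P_L$ is a summand of some $p$-bounded $\Gamma^\nu$, which amounts to the same thing since such $\Gamma^\nu$ are direct summands of $\otimes^{k+1}$). That statement is $A(k+1)$, i.e.\ the hard direction of Clausen--James in degree $k+1$ --- the very thing $B(k)$ is being used to establish. Neither of your proposed repairs escapes this circularity: ``running $A$ and $B$ in tandem'' still requires $A(k+1)$ as an input to $B(k)$, while lemma \ref{lm-r2} derives $A(k+1)$ from $B(k)$.

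The paper avoids the problem by never invoking the inductive hypothesis on the composition factors of the full product $X$. It filters $\otimes^d\otimes T^{(1)}$ so that the successive quotients are sums of $L_\lambda\otimes T^{(1)}$ with $\deg L_\lambda=d\le k+1-pe\le k-1$, writes $\lambda=\alpha+p\beta$ with $\alpha$ $p$-restricted, applies $A(k)$ only to $L_\alpha$ (of degree $\le d\le k$) to conclude it is left $p$-bounded, and then uses the weak Steinberg theorem (proposition \ref{prop-weak}) to factor $L_\lambda\otimes T^{(1)}\simeq L_\alpha\otimes(L_\beta\otimes T)^{(1)}$ and to identify every composition factor as $L_{\alpha+p\gamma}$ with $\gamma\ne(0)$, hence not $p$-restricted. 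If you want to keep your contradiction strategy, you must similarly shift the use of $A(k)$ from the degree-$(k+1)$ simple $L$ down to objects of degree $\le k$.
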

\begin{proof}
The functor $\otimes^d\otimes T^{(1)}$ admits a filtration whose sucessive quotients are direct sums of functors of the form 
$L_\lambda\otimes T^{(1)}$
where $L_\lambda$ is a simple functor of degree $d$. 
Thus, it suffices to prove that these tensor products $L_\lambda\otimes T^{(1)}$ have no $p$-restricted composition factors. Let us write $\lambda=\alpha+p\beta$, where $\alpha$ is a $p$-restricted partition and $\beta$ is a partition. Since $|\alpha|\le d\le k$, assertion $A(k)$ implies that the simple functor $L_\alpha$ is left $p$-bounded. Thus $L_\lambda\simeq L_{\alpha}\otimes L_\beta^{(1)}$ by the weak Steinberg theorem of proposition \ref{prop-weak}, hence 
$$L_\lambda\otimes T^{(1)}\simeq L_\alpha\otimes \left(L_\beta\otimes T\right)^{(1)}\;.$$
The functor $\left(L_\beta\otimes T\right)^{(1)}$ has composition factors of the form $(L_\gamma)^{(1)}$ with $\gamma\ne (0)$ and since $L_\alpha$ is left $p$-bounded, proposition \ref{prop-weak} implies that the composition factors of $L_\lambda\otimes T^{(1)}$ have the form $L_\alpha\otimes L_\gamma^{(1)}=L_{\alpha+p\gamma}$, hence are not $p$-restricted.
\end{proof}

\begin{lemma}\label{lm-r2}
If $A(k)$ and $B(k)$ are true, then $A(k+1)$ is true.
\end{lemma}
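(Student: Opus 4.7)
The plan is to unpack Corollary \ref{cor-quot} and combine it with $B(k)$ to force the only Frobenius-untwisted factor to carry all of the degree. First I would reduce to the critical case $d = k+1$, since for $d \le k$ the conclusion of $A(k+1)$ already follows from $A(k)$.

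So fix a $p$-restricted simple $L$ of degree $d = k+1$. By Corollary \ref{cor-quot}, there exist nonnegative integers $d_0, \ldots, d_r$ with $\sum_{i=0}^r p^i d_i = d$ and a surjection $T^{(d_0, \ldots, d_r)} \twoheadrightarrow L$. I would like to show that this forces $d_i = 0$ for $i \ge 1$, so that the surjection factors as $\otimes^d \twoheadrightarrow L$.

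Suppose for contradiction that some $d_i$ with $i \ge 1$ is positive, and set
\[
T := (\otimes^{d_1}) \otimes (\otimes^{d_2})^{(1)} \otimes \cdots \otimes (\otimes^{d_r})^{(r-1)},
\]
a homogeneous functor of positive degree $e = \sum_{i \ge 1} p^{i-1} d_i > 0$. Then there is an identification $T^{(d_0, \ldots, d_r)} = \otimes^{d_0} \otimes T^{(1)}$, and the degree computation
\[
d_0 + p e = d_0 + \sum_{i \ge 1} p^i d_i = d = k+1
\]
places us precisely in the scope of hypothesis $B(k)$. Hence no $p$-restricted simple occurs as a composition factor of $\otimes^{d_0} \otimes T^{(1)}$. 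But $L$ is a $p$-restricted simple and, being a quotient of this tensor product, is certainly a composition factor of it — a contradiction.

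There is no real obstacle here: the entire argument is bookkeeping on the degree identity $d = \sum p^i d_i$ to recognize it as $d_0 + p e$ with $e > 0$, and then invoking $B(k)$. The only point requiring a moment of care is the rewriting $T^{(d_0, \ldots, d_r)} = \otimes^{d_0} \otimes T^{(1)}$, which uses that precomposition by $I^{(1)}$ commutes with tensor products and that $(I^{(k)})^{\otimes d_k} = ((\otimes^{d_k})^{(k-1)})^{(1)}$.
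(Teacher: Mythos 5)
Your proof is correct and follows the same route as the paper: apply Corollary \ref{cor-quot} to write $L$ as a quotient of some $T^{(d_0,\dots,d_r)}$, then use $B(k)$ to rule out every tuple except $(k+1,0,\dots,0)$. The paper states this more tersely, but your explicit rewriting $T^{(d_0,\dots,d_r)}=\otimes^{d_0}\otimes T^{(1)}$ with $d_0+pe=k+1$ is exactly the bookkeeping the paper's argument implicitly relies on.
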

\begin{proof}
Since $A(k)$ is true, it remains to prove that a $p$-restricted functor $L$ of degree $k+1$ is necessarily a quotient of $\otimes^{k+1}$. By corollary \ref{cor-quot}, there exists a tuple of nonnegative integers $(d_0,\dots,d_r)$ such that $L$ is a quotient of a tensor product of the form $T^{(d_0,\dots,d_r)}$.
But assertion $B(k)$ says that such tensor products have no $p$-restricted composition factor except maybe if $(d_0,\dots,d_r)=(k+1,0,\dots,0)$. 
\end{proof}

Lemma \ref{lm-r0}, lemma \ref{lm-r1} and lemma \ref{lm-r2} imply that $A(k)$ is true for all $k\ge 0$. We are now ready to prove:

\begin{theorem}[Steinberg tensor product theorem]\label{thm-Steinberg}
Let $\lambda^0,\dots,\lambda^r$ be $p$-restricted partitions, and let $\lambda=\sum_{i=0}^r p^i\lambda^i$. There is an isomorphism:
$$L_\lambda\simeq L_{\lambda^0}\otimes L_{\lambda^1}^{(1)}\otimes\dots\otimes L_{\lambda^r}^{(r)}\;.$$
\end{theorem}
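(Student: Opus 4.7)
The plan is to assemble the ingredients already built up in the appendix. First, I would observe that lemmas \ref{lm-r0}, \ref{lm-r1} and \ref{lm-r2} yield by induction on $k$ that assertion $A(k)$ holds for every $k\ge 0$: indeed $A(0)$ is the base case, and then the alternating implications $A(k)\Rightarrow B(k)\Rightarrow A(k+1)$ propagate. Equivalently, every $p$-restricted simple functor $L_{\lambda^i}$ is a quotient of some tensor power $\otimes^{|\lambda^i|}$, hence is left $p$-bounded in the sense of definition \ref{def-bounded}.

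With this in hand, the simplicity of the tensor product follows by a descending induction on $s\in\{0,\dots,r\}$ applied to
\[
M_s \;=\; \bigotimes_{i=s}^{r} L_{\lambda^i}^{(i-s)} \;=\; L_{\lambda^s}\otimes L_{\lambda^{s+1}}^{(1)}\otimes\dots\otimes L_{\lambda^r}^{(r-s)}\;.
\]
The base case $M_r=L_{\lambda^r}$ is trivially simple. For the inductive step I would rewrite $M_s=L_{\lambda^s}\otimes M_{s+1}^{(1)}$; since $L_{\lambda^s}$ is left $p$-bounded and $M_{s+1}$ is simple by the inductive hypothesis, proposition \ref{prop-weak} applied with $r=1$ ensures that $M_s$ is simple. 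Specializing to $s=0$ yields that $T:=L_{\lambda^0}\otimes L_{\lambda^1}^{(1)}\otimes\dots\otimes L_{\lambda^r}^{(r)}$ is simple.

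It remains to identify $T$ with $L_\lambda$, where $\lambda=\sum p^i\lambda^i$. Precomposition by $I^{(i)}$ scales highest weights by $p^i$, and combined with the isomorphism on endomorphism rings of section \ref{subsec-frob-twist}, the self-duality of simples, and lemma \ref{lm-crit}, this gives $L_\mu^{(i)}\simeq L_{p^i\mu}$. A short induction on $r$ using property (4) of section \ref{subsec-simple}, namely that $L_{\mu+\nu}$ is a composition factor of $L_\mu\otimes L_\nu$, then shows that $L_\lambda$ occurs as a composition factor of $\bigotimes_{i=0}^{r} L_{p^i\lambda^i}=T$; since $T$ is already known to be simple, the two coincide. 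The main conceptual obstacle is invisible at this stage: it lies in the bootstrap $A(k)\Rightarrow B(k)\Rightarrow A(k+1)$, which itself draws upon theorem \ref{thm-1} through the weak Steinberg theorem, and without which one could not conclude that a $p$-restricted simple is left $p$-bounded. Once that input is in place, the remaining steps are essentially formal.
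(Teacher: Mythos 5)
Your proof is correct, and it follows the paper's overall strategy: establish $A(k)$ for all $k$ via the bootstrap of lemmas \ref{lm-r0}, \ref{lm-r1}, \ref{lm-r2}, then feed the resulting left $p$-boundedness of $p$-restricted simples into the weak Steinberg theorem (proposition \ref{prop-weak}). The one genuine difference is the organization of the induction. The paper inducts \emph{upward}, writing $\bigotimes_{i\le k}L_{\lambda^i}^{(i)}=\bigl(\bigotimes_{i\le k-1}L_{\lambda^i}^{(i)}\bigr)\otimes L_{\lambda^k}^{(k)}$ and applying proposition \ref{prop-weak} with twist $k$; this forces it to check that the partial product $\bigotimes_{i<k}L_{\lambda^i}^{(i)}$ is left $p^k$-bounded, which it does by observing that $(\otimes^d)^{(i)}$ is a quotient of $(\Gamma^{p^i})^{\otimes d}$. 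You instead induct \emph{downward}, writing $M_s=L_{\lambda^s}\otimes M_{s+1}^{(1)}$ and applying proposition \ref{prop-weak} only with $r=1$; the sole boundedness input you need is that each individual $p$-restricted simple is left $p$-bounded, which is exactly what $A(k)$ gives. This is a small but real simplification. You also spell out the identification of the simple product with $L_\lambda$ (via $L_\mu^{(i)}\simeq L_{p^i\mu}$, self-duality, lemma \ref{lm-crit}, and fact (4) of section \ref{subsec-simple}), a step the paper leaves implicit; making it explicit is harmless and arguably an improvement.
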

\begin{proof}
Since $A(k)$ is true for all $k\ge 0$, $p$-restricted simples are quotients of tensor powers $\otimes^d$. Moreover $(\otimes^d)^{(i)}=(I^{(i)})^{\otimes d}$ is a quotient of $(\Gamma^{p^i})^{\otimes d}$. Thus for all $k\le r$,
$\bigotimes_{i< k}L_{\lambda^i}^{(i)}$ is left $p^{k}$-bounded. An induction on $k$ using proposition \ref{prop-weak} shows that each tensor product $\bigotimes_{i\le k}L_{\lambda^i}^{(i)}$ is simple.
\end{proof}
\begin{remark}\label{rk-St-unstable}
If $\lambda=(\lambda_1,\dots,\lambda_k)$, then $L_\lambda(\kk^n)$ is a simple polynomial $GL_n(\kk)$-module if $n\ge k$ and is zero if $n<k$. Thus, theorem \ref{thm-Steinberg} actually implies the Steinberg tensor product theorem for polynomial representations of $GL_n(\kk)$, for all values of $n$ (and in particular without requiring that the representations are stable). Finally, all simple rational representations of $GL_n(\kk)$ can be obtained by tensoring simple polynomial representations of $GL_n(\kk)$ by a power of the determinant representation. Thus, theorem \ref{thm-Steinberg} implies the classical Steinberg tensor product theorem as in \cite[II.3.17]{Jantzen}.
\end{remark}

\begin{theorem}[Clausen and James' theorem]
A simple functor $L$ is $p$-restricted if and only if $\hom_{\PP_\kk}(L,\otimes^d)=\hom_{\PP_\kk}(\otimes^d,L)$ is nonzero.
\end{theorem}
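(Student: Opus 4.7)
The plan is to split the proof into the two implications, both of which follow directly from results already established in this appendix.

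For the forward direction, suppose $L$ is $p$-restricted of degree $d$. Lemmas \ref{lm-r0}, \ref{lm-r1}, \ref{lm-r2} together show that assertion $A(k)$ holds for every $k \ge 0$. Applied to $k = d$, this tells us that $L$ is a quotient of $\otimes^d$, so $\hom_{\PP_\kk}(\otimes^d, L) \ne 0$. Since both $L$ and $\otimes^d$ are self-dual (using fact \ref{item2} of section \ref{subsec-simple} and the obvious self-duality $(\otimes^d)^\sharp \simeq \otimes^d$), Kuhn duality gives $\hom_{\PP_\kk}(L, \otimes^d) \simeq \hom_{\PP_\kk}(\otimes^d, L) \ne 0$ as well.

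For the converse, suppose $L$ is a simple functor of degree $d$ which is \emph{not} $p$-restricted. By Steinberg tensor product theorem (theorem \ref{thm-Steinberg}), we may write $L \simeq L' \otimes L''^{(1)}$, where $L'$ is a $p$-restricted simple and $L''$ is a simple of positive degree. The tuple $\lambda = (1,1,\dots,1)$ of length $d$ is $p$-bounded (since $1 < p$), and we have $S^\lambda = \Gamma^\lambda = \otimes^d$. Applying lemma \ref{lm-annul-prelim} with $r = 1$, $F = L'$, $G = L''$ yields
\[
\hom_{\PP_\kk}(L, \otimes^d) = \hom_{\PP_\kk}(L' \otimes L''^{(1)}, S^\lambda) = 0,
\]
and symmetrically $\hom_{\PP_\kk}(\otimes^d, L) = \hom_{\PP_\kk}(\Gamma^\lambda, L' \otimes L''^{(1)}) = 0$. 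This completes the converse, and the theorem follows.

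There is essentially no obstacle here: all the real work has already been done in establishing the chain $A(0) \Rightarrow B(0) \Rightarrow A(1) \Rightarrow \cdots$ (which in turn rested on the weak Steinberg theorem \ref{prop-weak}, an immediate consequence of theorem \ref{thm-1}) and in proving the vanishing lemma \ref{lm-annul-prelim}. The only subtlety worth noting is that the converse implicitly uses the fact that $p \ge 2$ in order to conclude that the tuple $(1,\dots,1)$ is genuinely $p$-bounded; this is automatic since the ground field has positive characteristic.
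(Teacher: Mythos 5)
Your proof is correct and follows essentially the same route as the paper: the forward direction is exactly property $A(k)$ plus self-duality, and the converse uses the same Steinberg factorization $L\simeq L'\otimes L''^{(1)}$ followed by the same vanishing phenomenon. The only (harmless) difference is that you invoke lemma \ref{lm-annul-prelim} directly for the vanishing, whereas the paper routes it through theorem \ref{thm-1} (first noting via $A(k)$ that $L'$ is left and right $p$-bounded); your shortcut is slightly more economical since lemma \ref{lm-annul-prelim} needs no hypothesis on $L'$ beyond homogeneity.
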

\begin{proof}
Property $A(k)$ gives the `only if' part. Conversely, assume that the highest weight $\lambda$ of $L$ is not $p$-restricted. Using euclidean division, we can write $\lambda=\lambda^0+p\lambda^1$ with $\lambda^0$ $p$-restricted and $\lambda^1$ nonzero. Thus $L\simeq L_{\lambda^0}\otimes L_{\lambda^1}$ by Steinberg tensor product theorem. By property $A(k)$, $L_{\lambda^0}$ is left and right $p$-bounded, so that by theorem \ref{thm-1}  $\hom_{\PP}(L,\otimes^d)=\hom_{\PP}(\otimes^d,L)=0$.
\end{proof}

\begin{remark}\label{rk-Kuhn}
There already exists a functorial proof of Steinberg tensor product theorem in the literature \cite[Thm 7.11]{Kuhn}. However, the proof given in this appendix is quite different from that in \cite{Kuhn}. Let us stress two differences. First the proof in \cite{Kuhn} uses finite fields, while the size of the ground field plays no role in our proof. Second, to obtain a concrete form of \cite[Thm 7.11]{Kuhn}, one needs to know the classification of simple representations of symmetric groups. On the contrary, our proof does not use any knowledge of representations of symmetric groups. Better still, our reasonning also proves Clausen and James' theorem, so we can actually use our approach to derive the classification of simple representations of symmetric groups from the classification of simple representations of $GL_n$.
\end{remark}

Steinberg tensor product theorem tells us that if $L_\lambda$ is simple and $p$-restricted and $L_\mu$ is simple, then $L_\lambda\otimes L_\mu^{(1)}$ is simple. The following statement completes the picture regarding tensor products of simple objects. 

\begin{theorem}\label{thm-tensprespres}
Let $L$ and $L'$ be both simple and $p$-restricted. Then $L\otimes L'$ is not simple, unless one of the two is the constant functor $\kk$.
\end{theorem}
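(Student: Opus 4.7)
The plan is to argue by contradiction: assume $L\otimes L'$ is simple, and exhibit two linearly independent endomorphisms, contradicting Lemma~\ref{lm-crit}. Since $L=L_\lambda$ and $L'=L_\mu$ are self-dual simples, so is $L\otimes L'$, and Lemma~\ref{lm-crit} reduces simplicity to the assertion that $\End_{\PP_\kk}(L\otimes L')$ is one-dimensional. The identity accounts for one dimension; the goal is to produce a second, linearly independent endomorphism.

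The natural framework is the sum--diagonal adjunction from \S\ref{subsec-cross}, which gives
$$\End_{\PP_\kk}(L\otimes L')\;\simeq\;\hom_{\PP_\kk(2)}\bigl(L\boxtimes L',\,(L\otimes L')_\boxplus\bigr).$$
Since $(L\otimes L')_\boxplus=L_\boxplus\otimes L'_\boxplus$, a bidegree computation shows the bidegree-$(|\lambda|,|\mu|)$ part of the target is $\bigoplus_{k=0}^{\min(|\lambda|,|\mu|)}(L_\boxplus)^{(|\lambda|-k,k)}\otimes (L'_\boxplus)^{(k,|\mu|-k)}$. The summand at $k=0$ is just $L\boxtimes L'$ and yields the identity endomorphism. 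The summand at $k=1$ is $(L_\boxplus)^{(|\lambda|-1,1)}\otimes(L'_\boxplus)^{(1,|\mu|-1)}$; since any bifunctor of bidegree $(a,1)$ (resp.\ $(1,b)$) is automatically of the form $A'\boxtimes I$ (resp.\ $I\boxtimes B'$), this summand equals $(A'\otimes I)\boxtimes(I\otimes B')$ for functors $A'$ and $B'$ of degrees $|\lambda|-1$ and $|\mu|-1$. A K\"unneth computation together with a second application of the sum--diagonal adjunction then identifies the corresponding hom-space from $L\boxtimes L'$ with $\End(A')\otimes\End(B')$, which is nonzero as soon as $A'$ and $B'$ are nonzero.

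The main obstacle is therefore to check that $A'$ and $B'$ are nonzero. Equivalently, one needs a nonzero bidegree-$(|\lambda|-1,1)$ weight space in $L_\lambda(\kk^{n+1})$, where $n$ is the length of $\lambda$, and a symmetric statement for $\mu$. I would exhibit the weight $(\lambda_1,\dots,\lambda_{n-1},\lambda_n-1,1)$: when $\lambda_n=1$ it is Weyl-conjugate to the highest weight $\lambda$ and so has multiplicity $1$ in $L_\lambda$; when $\lambda_n\ge2$ it differs from $\lambda$ by the simple root $\alpha_n=e_n-e_{n+1}$, and its multiplicity in $L_\lambda$ is positive because, $p$-restrictedness forcing $\lambda_n<p$, the $\mathfrak{sl}_2$-submodule of $L_\lambda$ generated by the highest-weight vector under the $\mathfrak{sl}_2$-triple attached to $\alpha_n$ is the irreducible $\mathfrak{sl}_2$-module of highest weight $\lambda_n$, in which $f_{\alpha_n}v_\lambda$ is nonzero. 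The symmetric argument produces $B'\ne0$, yielding the desired second linearly independent endomorphism and completing the contradiction.
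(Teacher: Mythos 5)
Your proof is correct, and its overall skeleton coincides with the paper's: both reduce to showing $\dim\End_{\PP_\kk}(L\otimes L')\ge 2$ via Lemma \ref{lm-crit}, apply sum--diagonal adjunction and K\"unneth, isolate the bidegree-$(d-1,1)$ summand $(L_\boxplus)^{(d-1,1)}\simeq F_L\boxtimes I$ of Proposition \ref{prop-class}, and reduce everything to the nonvanishing of $F_L$ and $F_{L'}$ (the paper's Lemma \ref{lm-nonzero}). Where you genuinely diverge is in the proof of that nonvanishing. The paper proves $F_L\ne 0$ by a purely functor-categorical argument: Lemma \ref{lm-crs} exhibits a map $\psi_f\colon\otimes^{d+1}\to L\otimes I$ (twisting the tensor factors) that is not of the form $f\otimes\Id$, and a dimension count via sum--diagonal adjunction forces $\hom(\otimes^d,F_L\otimes I)\ne 0$; this rests on knowing that a $p$-restricted simple is a quotient of $\otimes^d$, i.e.\ on the Clausen--James-type property $A(k)$ established earlier in the appendix. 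You instead prove $F_L\ne 0$ by showing that $L_\lambda(\kk^{n+1})$ has a nonzero weight space with last coordinate $1$, namely $\lambda-\alpha_n$, which follows from $e_{\alpha_n}f_{\alpha_n}v_\lambda=\lambda_n v_\lambda\ne 0$ since $p$-restrictedness gives $1\le\lambda_n<p$ (your case split is unnecessary; this $\mathfrak{sl}_2$ computation covers $\lambda_n=1$ as well). Your route is shorter and bypasses Clausen--James entirely, and it pinpoints exactly where $p$-restrictedness enters; on the other hand it imports the highest-weight formalism (weights, the contravariant form or Weyl-module theory for $SL_2$), which the paper deliberately avoids so that the appendix remains self-contained and can in fact be used to \emph{derive} Clausen--James and Steinberg from Theorem \ref{thm-1}. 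Both arguments are sound; only the small imprecision that the relevant hom-space \emph{contains} $\End(A')\otimes\End(B')$ as a direct summand (here in fact equals it, since only the bidegree-$(d-1,1)$ component of $L_\boxplus$ can map to $A'\boxtimes I$) is worth stating carefully.
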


The remainder of the section is devoted to the proof of theorem \ref{thm-tensprespres}.

\begin{lemma}\label{lm-crs}
Let $d$ be a positive integer, and let $L$ be a simple quotient of $\otimes^d$. The following injection induced by the tensor product is not surjective:
$$ \hom_{\PP_\kk}(\otimes^{d},L)\otimes\hom_{\PP_\kk}(I,I)\hookrightarrow \hom_{\PP_\kk}(\otimes^{d+1},L\otimes I)\;.$$
\end{lemma}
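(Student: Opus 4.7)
The plan is a dimension count. The source has dimension $\dim_{\kk} \hom_{\PP_\kk}(\otimes^d, L)$, which is positive because $L$ is a nonzero quotient of $\otimes^d$ and $\hom_{\PP_\kk}(I, I) = \kk$ (fact \ref{item3} of section \ref{subsec-simple}). I will show the target has dimension exactly $d+1$ times this, which forces non-surjectivity as soon as $d \geq 1$. The injectivity asserted in the statement is immediate from theorem \ref{thm-1} applied in degree zero, so I do not need to address it separately.

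To compute the target, I rewrite $L \otimes I = (L \boxtimes I)_\Delta$ and apply the sum-diagonal adjunction from section \ref{subsec-cross}:
$$\hom_{\PP_\kk}(\otimes^{d+1}, L \otimes I) \;\simeq\; \hom_{\PP_\kk(2)}\bigl((\otimes^{d+1})_\boxplus,\, L \boxtimes I\bigr).$$
Since $L \boxtimes I$ is homogeneous of bidegree $(d, 1)$, only the bidegree $(d, 1)$ summand of $(\otimes^{d+1})_\boxplus$ contributes. Expanding $(V \oplus W)^{\otimes(d+1)}$ factor by factor, this summand is the direct sum of the $d+1$ pieces in which exactly one tensor slot is $W$ and the remaining $d$ are $V$; each such piece is isomorphic as a bifunctor to $\otimes^d \boxtimes I$, yielding
$$(\otimes^{d+1})_\boxplus^{(d, 1)} \;\simeq\; \kk^{d+1} \otimes (\otimes^d \boxtimes I).$$
Applying the K\"unneth isomorphism (which holds in degree zero since $I$ is finite) then gives $\dim_\kk\hom_{\PP_\kk}(\otimes^{d+1}, L\otimes I) = (d+1)\cdot \dim_\kk\hom_{\PP_\kk}(\otimes^d, L)$, which delivers the desired factor of $d+1$.

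There is no serious obstacle here: the proof is bookkeeping of the bidegree decomposition of $(\otimes^{d+1})_\boxplus$, combined with the sum-diagonal adjunction. The only optional refinement would be to identify geometrically which of the $d+1$ summands the cup product actually hits (namely the one where the $W$-slot is the last tensor factor), which exhibits the cokernel concretely as $d$ copies of $\hom_{\PP_\kk}(\otimes^d, L)$; but this is not needed to conclude non-surjectivity.
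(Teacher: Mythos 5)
Your dimension count is correct and it takes a genuinely different route from the paper. The paper's proof is an explicit construction: it observes that any map in the image has the form $f\otimes\Id$, hence sends the summand $(\kk^d)^{\otimes d}\otimes\kk$ of $V^{\otimes d+1}$ (with $V=\kk^d\oplus\kk$) nontrivially into $L(\kk^d)\otimes\kk$, and then exhibits the map $\psi_f(x_1\otimes\dots\otimes x_{d+1})=f(x_2\otimes\dots\otimes x_{d+1})\otimes x_1$, which kills that summand and therefore lies outside the image. Your argument instead splits the \emph{source} $\otimes^{d+1}$ via the adjunction $\hom_{\PP_\kk}((\otimes^{d+1})_{\boxplus\Delta},\dots)$ — more precisely $\hom_{\PP_\kk}(\otimes^{d+1},(L\boxtimes I)_\Delta)\simeq\hom_{\PP_\kk(2)}((\otimes^{d+1})_\boxplus,L\boxtimes I)$ — identifies the relevant bidegree $(d,1)$ component as $(\otimes^d\boxtimes I)^{\oplus(d+1)}$, and concludes by K\"unneth (legitimately applied, since $\otimes^d$ and $I$ are finite) that the target has dimension $(d+1)\dim\hom_{\PP_\kk}(\otimes^d,L)$, which exceeds the source dimension because $\hom_{\PP_\kk}(\otimes^d,L)\ne 0$ and $d\ge 1$. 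Each approach has its merits: the paper's is elementary and self-contained (no adjunction needed at this point), while yours computes the cokernel exactly as $d$ copies of $\hom_{\PP_\kk}(\otimes^d,L)$; in fact your computation, compared with the decomposition the paper obtains in lemma \ref{lm-nonzero} by splitting the target instead (namely $\hom(\otimes^d,L)\oplus\hom(\otimes^d,F_L\otimes I)$), immediately yields $\dim\hom_{\PP_\kk}(\otimes^d,F_L\otimes I)=d\cdot\dim\hom_{\PP_\kk}(\otimes^d,L)>0$, so it proves lemma \ref{lm-nonzero} at the same time and would slightly streamline the deduction of theorem \ref{thm-tensprespres}.
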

\begin{proof}
Fix a vector space $V$ equipped with an isomorphism $\kk^d\oplus \kk\simeq V$. Let $\iota_1:\kk\hookrightarrow V$, $\iota_2:\kk^{d}\hookrightarrow V$, $\pi_1:V\to \kk$ and $\pi_2:V\to \kk^d$ be the associated canonical maps. Since $\End_{\PP_\kk}(I)\simeq\kk$, any nonzero map $\phi$ in the image of the injection of lemma \ref{lm-crs} is of the form $\phi=f\otimes\Id$ for a nonzero $f$. Thus the following composite is nonzero (it equals the map induced by $f$):
$$(\kk^{d})^{\otimes d}\otimes\kk\xrightarrow[]{(\iota_2)^{\otimes d}\otimes \iota_1}
V^{\otimes d+1}\xrightarrow[]{\phi} L(V)\otimes V\xrightarrow[]{L(\pi_2)\otimes\pi_1}L(\kk^{d})\otimes \kk\;.\quad(*)$$
For all morphisms $f:\otimes^d\to L$, we define a morphism $\psi_f:\otimes^{d+1}\to L\otimes I$ by
$\psi_f(x_1\otimes\dots\otimes x_{d+1})= f(x_2\otimes \dots\otimes x_{d+1})\otimes x_1.$
If $f$ is nonzero,  then $\psi_f$ is nonzero, while for $\phi=\psi_f$ the composite $(*)$ is zero. In particular $\psi_f$ is not in the image of the inclusion.
\end{proof}

\begin{lemma}\label{lm-nonzero}
Let $d$ be a positive integer, and let $L$ be a simple quotient of $\otimes^d$. Let $L^{(d-1,1)}$ be the homogeneous summand of bidegree $(d-1,1)$ of the bifunctor $(V,W)\mapsto L(V\oplus W)$. There is an isomorphism $L^{(d-1,1)}\simeq F_L\boxtimes I$ where $F_L$ is a \emph{nonzero} homogeneous functor of degree $d-1$.
\end{lemma}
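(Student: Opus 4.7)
The plan is to split the proof into two parts: first establish the factorization $L^{(d-1,1)}\simeq F_L\boxtimes I$ on abstract grounds, then show $F_L$ is nonzero by testing against a well-chosen bifunctor.

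For the factorization, the key observation is that the category $\PP_{1,\kk}$ of homogeneous degree-one strict polynomial functors is equivalent to $\kk$-vector spaces via $F\mapsto F(\kk)$, so every such functor is canonically a direct sum of copies of $I$. As a consequence, for any bifunctor $B$ of bidegree $(d-1,1)$, freezing the first variable yields a degree-one functor $B(V,-)$ that is naturally isomorphic to $B(V,\kk)\otimes -$. This produces a natural isomorphism $B\simeq F\boxtimes I$ with $F(V):=B(V,\kk)$. Specializing to $B=L^{(d-1,1)}$ defines $F_L$, which is automatically homogeneous of degree $d-1$.

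To prove that $F_L$ is nonzero it suffices to show $L^{(d-1,1)}\neq 0$. I would test this against the bifunctor $\otimes^{d-1}\boxtimes I$, which is itself homogeneous of bidegree $(d-1,1)$. Since morphisms between homogeneous bifunctors of distinct bidegrees vanish, one has
$$\hom_{\PP_\kk(2)}(L^{(d-1,1)},\otimes^{d-1}\boxtimes I)=\hom_{\PP_\kk(2)}(L_\boxplus,\otimes^{d-1}\boxtimes I).$$
The sum-diagonal adjunction of section \ref{subsec-cross} identifies the right-hand side with
$$\hom_{\PP_\kk}(L,(\otimes^{d-1}\boxtimes I)_\Delta)=\hom_{\PP_\kk}(L,\otimes^{d-1}\otimes I)=\hom_{\PP_\kk}(L,\otimes^d).$$
By hypothesis $L$ is a simple quotient of $\otimes^d$, so $\hom_{\PP_\kk}(\otimes^d,L)\neq 0$; and applying Kuhn duality (both $L$ and $\otimes^d$ are self-dual by fact \ref{item2}) yields $\hom_{\PP_\kk}(L,\otimes^d)\neq 0$. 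Hence $L^{(d-1,1)}$, and therefore $F_L$, is nonzero.

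There is no serious obstacle here: the whole argument rests on two straightforward observations, namely that $\PP_{1,\kk}$ is semisimple with unique simple $I$ (which immediately yields the factorization), and that sum-diagonal adjunction turns the question of nonvanishing of $L^{(d-1,1)}$ into the trivially nonzero space $\hom_{\PP_\kk}(L,\otimes^d)$.
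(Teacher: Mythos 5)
Your proof is correct, and its second half takes a genuinely different --- and more direct --- route than the paper's. For the factorization $L^{(d-1,1)}\simeq F_L\boxtimes I$ the paper invokes its general classification of multi-additive functors (proposition \ref{prop-class}); your observation that $\PP_{1,\kk}\simeq S(1,1)\text{-Mod}=\kk\text{-Mod}$, so that freezing the first variable of a bidegree-$(d-1,1)$ bifunctor forces it to be of separable type $F\boxtimes I$ with $F(V)=B(V,\kk)$, gives the same conclusion by hand in this special case. For the nonvanishing of $F_L$, the paper argues indirectly: it decomposes $\hom_{\PP_\kk}(\otimes^{d+1},L\otimes I)$ by sum-diagonal adjunction and K\"unneth into $\hom(\otimes^d,L)\otimes\End(I)\,\oplus\,\hom(\otimes^d,F_L\otimes I)\otimes\End(I)$ and then uses lemma \ref{lm-crs} (which exhibits an explicit morphism outside the image of the cup product) to conclude that the second summand, hence $F_L$, is nonzero. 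You instead pair $L^{(d-1,1)}$ against the test bifunctor $\otimes^{d-1}\boxtimes I$ and reduce, via the vanishing of morphisms between distinct bidegrees and the adjunction $\hom_{\PP_\kk(2)}(L_\boxplus,B)\simeq\hom_{\PP_\kk}(L,B_\Delta)$, to $\hom_{\PP_\kk}(L,\otimes^d)\ne 0$, which follows from the hypothesis by duality. Your route bypasses lemma \ref{lm-crs} entirely; the only thing the paper's argument buys in exchange is the slightly stronger conclusion $\hom(\otimes^d,F_L\otimes I)\ne 0$, which is not needed for the application to theorem \ref{thm-tensprespres}. One cosmetic point: the self-duality of $\otimes^d$ is not fact \ref{item2} (which concerns simples) but the elementary identity $(\otimes^d)^\sharp\simeq\otimes^d$, used by the paper itself in corollary \ref{cor-caract}.
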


\begin{proof} Proposition \ref{prop-class} provides an isomorphism $L^{(d-1,1)}\simeq F_L\boxtimes I$. We have to prove that $F_L$ is nonzero.
By using the sum-diagonal adjunction and the K\"unneth formula, we obtain that $\hom_{\PP_\kk}(\otimes^{d+1},L\otimes I)$ is isomorphic to
$$\hom_{\PP_\kk}(\otimes^d,L)\otimes\End_{\PP_\kk}(I)\;\oplus\; \hom_{\PP_\kk}(\otimes^d,F_L\otimes I)\otimes \End_{\PP_\kk}(I)\;.$$
For dimension reasons, lemma \ref{lm-crs} implies that $\hom_{\PP_\kk}(\otimes^d,F_L\otimes I)$ is nonzero. Hence $F_L$ is nonzero.
\end{proof}

\begin{proof}[Proof of theorem \ref{thm-tensprespres}]
We will show that the dimension of $\End_{\PP_{\kk}}(L\otimes L')$ is not one. To this purpose, we use the sum-diagonal adjunction and the K\"unneth formula. We obtain that the vector space $\End_{\PP_{\kk}}(L\otimes L')$ contains 
$$\End_{\PP_{\kk}}(L)\otimes \End_{\PP_{\kk}}(L')\;\oplus\;\hom_{\PP_\kk}(L,F_L\otimes I)\otimes \hom_{\PP_\kk}(L',F_{L'}\otimes I) $$
as a direct summand, with $F_L$ and $F_{L'}$ defined as in lemma \ref{lm-nonzero}. By using the sum-diagonal adjunction and the K\"unneth formula again, we obtain that $\hom_{\PP_\kk}(L,F_L\otimes I)$ contains 
$\End_{\PP_\kk}(F_L)\otimes \End_{\PP_\kk}(I)$
as a direct summand (and similarly for $L'$). But lemma \ref{lm-nonzero} asserts that $F_L$ and $F_{L'}$ are nonzero, so that the dimension of the corresponding endomorphism spaces is at least one. Thus, the dimension of $\End_{\PP_{\kk}}(L\otimes L')$ is at least two.
\end{proof}

\end{document}